\newtheorem{theorem}{Theorem}
\newtheorem{definition}{Definition}
\newtheorem{lemma}{Lemma}
\newtheorem{remark}{Remark}
{\it}
\newtheorem{proposition}{Proposition}
\newtheorem{corollary}{Corollary}
\newtheorem{ex}{Example}{}
\newtheorem{assumption}{Assumption}
\newtheorem{conjecture}{Conjecture}
\def\/{\, | \,}
\def\ind{{\mathchoice {\rm 1\mskip-4mu l} {\rm 1\mskip-4mu l}
{\rm 1\mskip-4.5mu l} {\rm 1\mskip-5mu l}}}
\def\Z{{\mathbb Z}}
\def\boe{{\mathbf e}}
\newcommand{\D}{{\mathbb D}}
\def\ssc{\scriptsize{\text{\textsc{c}}}}
\def\sd{\scriptsize{\text{\textsc{d}}}}
\def\sM{\text{\textsc{m}}}
\def\sL{\text{\textsc{l}}}
\def\sU{\text{\textsc{u}}}
\def\v{{\--}}
\def\pv{{\not\!\!\--}}
\newcommand{\R}{{\mathbb R}}
\newcommand{\maV}{{\mathcal V}}
\def\Z{{\mathbb Z}}
\def\I{{\mathbb I}}
\newcommand\proce[1]{\left(#1,\, t\ge 0\right)}
\newcommand\gre{\mathbf{e}}
\newcommand\maF{{\mathcal F}}
\newcommand\maL{{\mathcal L}}
\newcommand\maI{{\mathcal I}}
\newcommand\maS{{\mathcal S}}
\newcommand\maE{{\mathcal E}}
\newcommand\maM{{\mathcal M}}
\newcommand\maU{{\mathcal U}}
\newcommand\maP{{\mathcal P}}
\newcommand\maN{{\mathcal N}}
\newcommand\maO{{\mathcal O}}
\newcommand\maT{{\mathcal T}}
\newcommand\mGS{{\mathbb G^{\maS}}}
\newcommand{\af}{\alpha}
\newcommand{\lm}{\lambda}
\newcommand{\ep}{\epsilon}
\newcommand{\Ra}{\Rightarrow}
\newcommand{\ra}{\rightarrow}
\newcommand{\deq}{\stackrel{\rm d}{=}}
\newcommand{\qandq}{\quad\mbox{and}\quad}
\newcommand{\qifq}{\quad\mbox{if}\quad}
\newcommand{\qasq}{\quad\mbox{as ~}}
\newcommand{\qforallq}{\quad\mbox{for all }}
\newcommand{\RR}{{\mathbb R}}
\newcommand{\ZZ}{{\mathbb Z}}
\def\tinf{\rightarrow\infty}
\def\barq{\bar{Q}}
\def\on{\stackrel}
\def\wt{\widetilde}
\newcommand{\bes}{\begin{equation*}}
\newcommand{\ees}{\end{equation*}}
\newcommand{\bequ}{\begin{equation}}
\newcommand{\eeq}{\end{equation}}
\newcommand{\bi}{\begin{itemize}}
\newcommand{\ei}{\end{itemize}}
\newcommand{\bsplit}{\begin{split}}
\newcommand{\esplit}{\end{split}}
\newcommand{\bea}{\begin{eqnarray}}
\newcommand{\eea}{\end{eqnarray}}
\newcommand{\beas}{\begin{eqnarray*}}
\newcommand{\eeas}{\end{eqnarray*}}
\newsavebox{\fmbox}
\begin{document}
\begin{frontmatter}

\title{On the Instability of Matching Queues}
\runtitle{Instability of Matching Queues}

\begin{aug}
\author{\fnms{Pascal} \snm{Moyal}\thanksref{m1}\ead[label=e1]{pascal.moyal@utc.fr}}
\and
\author{\fnms{Ohad} \snm{Perry}\thanksref{m2}\ead[label=e2]{ohad.perry@northwestern.edu}}

\runauthor{Moyal and Perry}

\affiliation{Universit\'e de Technologie de Compi\`egne\thanksmark{m1} and
Northwestern University\thanksmark{m2}}

\address{Laboratoire de Math\'ematiques appliqu\'ees, Universit\'e de Technologie de Compi\`egne\\
Industrial Engineering and Management Sciences, Northwestern University \\
\printead{e1}\\
\phantom{E-mail:\ }\printead*{e2}}

\end{aug}

\begin{abstract}
A matching queue is described via a graph, an arrival process, and a
matching policy. Specifically, to each node in the graph there is a corresponding
arrival process of items, which can either be queued or matched
with queued items in neighboring nodes. The matching policy
specifies how items are matched whenever more than one matching is
possible. Given the matching graph and the matching policy, the
stability region of the system is the set of intensities of the
arrival processes rendering the underlying Markov process positive
recurrent. In a recent paper, a condition on the arrival
intensities, which was named {\sc Ncond}, was shown to be necessary
for the stability of a matching queue. That condition can be thought
of as an analogue to the usual traffic condition for traditional
queueing networks, and it is thus natural to study whether it is
also sufficient. In this paper, we show that this is not the case in
general. Specifically, we prove that, except for a particular class
of graphs, there always exists a matching policy rendering the
stability region strictly smaller than the set of arrival
intensities satisfying {\sc Ncond}. Our proof combines graph- and
queueing-theoretic techniques: After showing explicitly, via
fluid-limit arguments, that the stability regions of two basic
models is strictly included in {\sc Ncond}, we generalize this
result to any graph inducing either one of those two basic graphs.
\end{abstract}

\begin{keyword}[class=MSC]
\kwd[Primary ]{60K25}
\kwd[; secondary ]{60F17}
\end{keyword}

\begin{keyword}
\kwd{matching queues}
\kwd{instability}
\kwd{fluid limits}
\kwd{graphs}
\end{keyword}

\end{frontmatter}

\maketitle

\section{Introduction} \label{secIntro}
We consider a continuous-time matching queueing system in which
items of different classes arrive one by one and
depart in pairs. 
Specifically, we assume that any item is either matched with exactly one other item immediately upon arrival,
and both items leave the system, or is stored in a buffer until it is matched.
Since matchings are pairwise, such a matching model can be represented via an undirected graph,
in which each node represents a class of arrivals, and an edge between two nodes represents that items of the two corresponding classes can be matched together.
A {\em matching policy} describes the matching rule whenever more than one matching is possible for an incoming item.

The model just described is closely related to the discrete-time {\em stochastic matching model} introduced in \cite{MaiMoyal15}, in which
items enter the system at discrete time points, and their class is drawn upon arrival from
a given probability distribution on the set of classes. We elaborate on the relation between the two models below.

\paragraph{Matching models in the literature}

Matching queueing models arise directly in several applications, such as organ transplantation \cite{Davis_kidney} and public-housing assignments \cite{TW08}.
They were also employed in the literature as relaxations for complex many-server queueing systems \cite{AdWe, CKW09},
stochastic processing networks, and assemble-to-order systems; see \cite{GurWa}.
We refer to these references for comprehensive literature reviews of related models.

A more recent application for matching queues is in modeling {\em sharing-economy} (or{\em collaborative consumption}) platforms,
with the most
relevant examples being car-sharing platforms, such as Uber and Lyft, lodging services, such as Airbnb,
and virtual call centers (namely call centers with home-based agents), as considered in, e.g., \cite{GLM15, rouba}.
Since a platform operating in a sharing-economy market must match supply and demand at every instance, possibly
in a multi-region setting, matching queues can be used to model and optimize such platforms; see \cite{Joh15} for a recent application in the car-sharing setting.


The term {\em matching queues} was introduced in \cite{GurWa}.
In that reference, items can be matched by groups of size two or more, and the goal
is to minimize finite-horizon cumulative holding costs. Moreover, the
controller can keep matchable items in storage for more ``profitable'' future matches.
A myopic, discrete-review control is shown to be asymptotically optimal, as the arrival rate grows large.
Thus, both the model and the objectives of \cite{GurWa} are different than ours here, since we consider
pairwise matchings, and are concerned with stability properties.

The stability of matching models operating under
the {\em first-come first-served} (FCFS) policy was studied for several particular graphs in \cite{CKW09}, assuming that
both the arrivals and the departures occur by pairs. (If items arrive one by one, the model can never be stable, as will become clear below.)
A discrete-time Markov chain representation
was employed in combination with Lyapunov techniques, to study the stability (ergodicity) of certain models having bipartite graphs.
An important (and expected) observation from this latter reference is that characterizing the stability region,
namely, the law of the arrival process under which the system is stable, is
nontrivial even for models with relatively simple graphs. 

The matching model in \cite{CKW09} was further analyzed in \cite{BGMa12}.
An alternative Markov representation was introduced,
leading to a more complete picture of the stability problem. General sufficient and necessary conditions for stability of the underlying Markov chain were given,
together with properties of several matching policies. In particular, the stability of any model applying the 'match the longest' policy (under which
an item that has more than one matching option upon arrival is matched with an item from the longest queue) was proved, assuming the necessary condition
for stability holds.

The necessary condition for stability in \cite{BGMa12} was employed in \cite{AdWe} to prove the existence of unique matching rates
for models satisfying a certain ``complete resource pooling'' condition. 
The models considered in \cite{AdWe}
are again bipartite and operate in the FCFS matching policy.
Interestingly, the stationary distribution of the Markov chain in \cite{AdWe} is shown to have a product form.

In \cite{AdWe14} a continuous-time model is considered for a bipartite matching system operating under the FCFS-ALIS (Assign Longest-Idle Server)
policy. In this paper, Markovian service system (i.e., service times are considered) with skill-based routing, are modeled as matching queues,
and the stationary distribution, when it exists, is shown to have a product form. Fluid limits are employed in the overloaded case to prove the existence
of a local steady state.

The graphs in \cite{AdWe,AdWe14, BGMa12, CKW09} are all bipartite.
Therefore, to make the question of stability nontrivial, items are
assumed to arrive in pairs, as was mentioned above. This assumption
is dropped in \cite{MaiMoyal15}, which introduces the aforementioned
(discrete-time) stochastic matching model with general graph
topology. In \cite{MaiMoyal15}, a thorough study of the structure of
the stability region of the model is proposed, partially relying on
the results in \cite{BGMa12}. A natural necessary condition, named
{\sc Ncond}, for the stability of {\bf any} such stochastic matching
model is introduced, implying, in particular, that no model can be
stable if the matching graph is a tree, or more generally, a
bipartite graph. (This explains the assumption that arrivals occur
in pairs in the papers dealing with bipartite graphs cited above. If
items arrive one by one, the system cannot be stable.) In addition,
a particular class of graphs is exhibited (the non-bipartite {\em
separable} ones - see Definition \ref{def:sep} below), for which
{\sc Ncond} is also a sufficient condition for stability. However,
the study of a particular model on a non-separable graph (see
\cite{MaiMoyal15}, p.14) shows that {\sc Ncond} is not sufficient in
general for non-separable graphs. This raises the question of
whether the sufficiency of {\sc Ncond} is true {\em only} for
separable graphs. The Lyapunov-stability techniques that were
employed for the particular model in \cite{MaiMoyal15}, render the
generalization of the arguments in \cite{MaiMoyal15} to a larger
class of non-separable graphs impractical in the discrete-time
settings.


Since a matching queue
As is easily seen, and will be shown in Theorem \ref{thm:stabCont}
below, the stability region of a discrete-time stochastic model can
be studied by embedding it in an appropriate continuous-time model.
Thus, the continuous-time counterparts of the results in
\cite{MaiMoyal15} hold for our matching queues, and vice versa. The
advantage of the continuous-time setting is that powerful
fluid-limit techniques can be employed, which greatly facilitate the
stability analysis.


\paragraph{Stability of Stochastic Networks via Fluid Limits}
The necessary condition for stability {\sc Ncond}, defined in
\eqref{Ncond1} below, can be thought of as an analogue to the usual
traffic condition of standard queueing networks, requiring that the
long-term rate of arrivals to each service station be less than the
long-run output rate at that station. Therefore, our work relates to
the literature on (in)stability of subcritical stochastic networks,
which we briefly review.

Consider a stochastic queueing network with $d \ge 1$ service stations and $K \ge 1$ classes. 
Let $a^k_i$ denote the (long-run) arrival rate
of class-$k$ jobs into service station $i$,
and let $m^k_i$ denote the mean service time of class-$k$ jobs in this station, $1 \le k \le K$.
Then the system is subcritical if
\bequ \label{subcritical}
\rho_i := \sum_k a^k_i m^k_i < 1 \qforallq 1 \le i \le d.
\eeq
It is well-known that Condition (\ref{subcritical}) is not sufficient to ensure stability of stochastic networks in general.
The first examples of this fact are the deterministic Lu-Kumar network \cite{LuKumar}
(and its stochastic counterpart; \S 3 in \cite{BramsonBook}),
and the Rybko-Stolyar network \cite{RySto92}, both of which consider static priority
service policies. A subcritical, multiclass, two-station network, having a Poisson arrival process and exponential service times
in both service stations for all classes, was shown to be unstable under the FIFO discipline in \cite{Bramson_Instable}.
See \cite{BramsonBook} for an elaborate discussion, including a comprehensive literature review of the subject.

Fluid models are arguably the most effective tool to proving that a queueing network is stable, and can also be employed to prove instability of such networks.
Specifically, following \cite{RySto92}, Dai \cite{DaiFluid} showed that, under mild regularity conditions,
if all the (subsequential) fluid limits of the queues, for all possible initial conditions,
converge to $0$ in finite time w.p.1, then the system is stable, in the sense that the underlying queue process is positive Harris recurrent.
We rely on \cite{DaiFluid} to characterize the stability region of specific matching queues
(see Proposition \ref{propPendant_iff} and Corollary \ref{cor:5cycle}
below), but our main result is concerned with proving a general {\em instability} result.
Regarding the use of fluid limits to prove instability of stochastic networks, we mention that
partial converse results to \cite{DaiFluid} exist, e.g., \cite{daiInstablefluid, GamHas2005, meyn1995}.
Here, we build on the theory in \cite[Ch.\ 9]{RobertBook} to prove our main result by
characterizing fluid limits uniquely for appropriate initial conditions,
and showing that those fluid limits do not decrease to the origin in finite time; see Lemma \ref{lmUnstable} below.


An interesting feature of the fluid limits we obtain is that their dynamics are determined by the stationary distribution of a ``fast'' CTMC.
Specifically, if the fluid queue associated with one of the nodes is positive, then the relevant time scale for this queue is
slower than the time scale for the fluid queues that are null.
In the limit, the effect of the ``fast'' (i.e., null) queues on the evolution of the positive fluid queues is averaged-out
instantaneously, a phenomena known as a stochastic {\em averaging principle} (AP) in the literature. 
See \cite{PW-ODE, PWfluid} and the references therein, as well as \cite{Zhang_Chatting, Zhang_wireless} for recent examples of fast averaging
in queueing networks.

\paragraph{Organization of the Paper}
The rest of the paper is organized as follows. In Section \ref{secModel} we elaborate on our model
and introduce the main notation and terms that will be used.
In Section \ref{secMain} we present our main result, Theorem \ref{thm:main}. 
Section \ref{secFluidStable} develops the fluid limit (Theorem \ref{thFWLLN}) 
and Section \ref{sec:Examples} studies models that are key to the proof of Theorem \ref{thm:main}.
Theorem \ref{thm:main} is proved in Section \ref{secProofMain}, building on
the results of Sections \ref{secFluidStable} and \ref{sec:Examples}, and the FWLLN is proved in Section \ref{secFluidProof}.
In Section \ref{sec:U} we present a related instability result for an
alternative matching policy.
Summary of the main paper and directions for future research are presented in \ref{secSummary}.
In addition to the main paper, in Appendix \ref{sec:RG} we demonstrate that our main results have implications to the construction
of matchings on random graphs.

\section{The Model} \label{secModel}
In this section we describe the matching queueing model in detail, after introducing the notation and key terms that we employ.

\subsection{Basic Terms and Notation}

We adopt the usual $\RR$ and $\ZZ$ notation for the sets of real numbers and
integers, respectively. We let $\RR_+$ and $\RR_{++}$ denote, respectively, the sets of non-negative and strictly positive real numbers.
Similarly, $\ZZ_+$ and $\ZZ_{++}$ denote the sets of non-negative and strictly positive integers.
For any two elements $a,b \in \ZZ_+$, we let $\llbracket a,b \rrbracket := \left\{a,a+1,...,b\right\}.$
For a set $A$, we let $|A|$, 
denote the cardinality of $A$ and for any $k \in \ZZ_{++}$, 
$A^k$ denotes the set of $k$-dimensional vectors with components in $A$.
For any $i \in \llbracket 1,k \rrbracket$, the $i$th vector of the canonical basis of $\RR^k$ is denoted by $\boe_i$, namely, 
$\boe_i$ has $1$ in its $i$th coordinate and $0$ elsewhere.
For any subset $J \subset \llbracket 1,k \rrbracket$ and $x\in \RR^k$,
we use the notation $x_J$ for the restriction of $x$ to its coordinates corresponding to the indices of $J$.

For an interval $I \subset [0, \infty)$, let $\D^d(I)$ denote the space of $\RR^d$-valued functions on $I$ that are right continuous and have limits from the left everywhere,
endowed with the standard Skorohod $J_1$ topology \cite{Billingsley}. .
To simplify notation, we write, e.g., $\D^d(a,b)$ instead of $\D^d((a,b))$,
and $\D(I)$ for $\D^1(I)$ (we remove the superscript when $d=1$).
We omit the interval from the notation whenever it can be taken to be an arbitrary compact interval, e.g., convergence in $\D^d$
holds over compact subintervals of $[0, \infty)$. We write $\mathbb C^d(I)$ for the subspace of continuous functions on $I$,
with $\mathbb C := \mathbb C^1$.

\paragraph{Random Variables and Processes}
We work on the probability space $\left(\Omega,\maF,P\right)$.
We write $\deq$ to denote equality in distribution and $\Rightarrow$ to denote convergence in distribution.
For a sequence of real-valued random variables $\{Y^n : n \in \ZZ_{++}\}$ we write $Y^n \Ra \infty$ if $P(Y^n > M) \ra 1$ as $n\tinf$, for any $M > 0$.
The fluid-scaled version of a sequence of stochastic processes $\{Y_n : n \in \ZZ_{++}\}$ is denoted by $\bar{Y}^n := Y^n / n$.

For two real-valued stochastic processes $X$ and $Y$ we write $X
\le_{st} Y$ if $X$ is smaller than $Y$ in {\em sample-path
stochastic order}, namely, if it is possible to construct two
processes $\tilde X$ and $\tilde Y$ on a common probability space,
such that $\tilde X \deq X$, $\tilde Y \deq Y$, and the sample paths
of $\tilde X$ lie below those of $\tilde Y$ with probability 1
(w.p.1 for short). When $X$ and $Y$ are $\RR^d$-valued, $d > 1$, $X
\le_{st} Y$ means that $X_i \le_{st} Y_i$, for all $1 \le i \le d$.

\paragraph{Graph-Related Terminology}

In addition to the notation, we introduce basic terms of graph theory that will be used below.
A graph $G$ is denoted by $G = (\maV, \maE)$, where $\maV$ and $\maE$ are the set of nodes and edges, respectively.
The nodes of $G$ are labeled arbitrarily in $\llbracket 1,|\maV| \rrbracket$,
and we often identify $\maV$ with $\llbracket 1,|\maV| \rrbracket$.
We write $i \v j$ whenever $(i,j)\in \maE$, namely, nodes $i$ and $j$ are connected by an edge, and $i \pv j$ otherwise.
If $i \v j$, then these two nodes are said to be {\em neighbors}.
All the graphs considered in this paper are {\em non-oriented}, i.e. $i\v j$ if and only if $j \v i$
(and the edges $(i,j)$ and $(j,i)$ are indistinguishable) and {\em simple}, {\em i.e.} there is no edge connecting a node to itself
($i \pv i$ for any $i \in \maV$), and two nodes are connected by at most one edge.

For any subset $A$ of $\maV$, we let $\maE(A)$ be the set of neighbors of all the nodes in $A$, i.e.,
\bes
\maE(A) := \{j \in \maV : i \v j, \text{ for some } i \in A\}.
\ees
We write $\maE(i)$ for the neighbors of a single node $i$ (instead of $\maE(\{i\})$).

The graph $\breve G=(\breve \maV,\breve \maE)$ is said to be a {\em
subgraph} of $G=(\maV,\maE)$ if $\breve \maV \subset \maV$ and
$\breve \maE \subset \maE$. We say that $G$ {\em induces} the
subgraph $\breve G$, whenever $\breve \maE$ equals the restriction
of $\maE$ to $\breve\maV^2$ (recall that $A^2=A\times A$ for any set
$A$). In other words, if $(i,j)\in \maE$, then $(i,j)\in \breve
\maE$, for all $i,j \in \breve \maV$. In that case, $\breve G$ is
said to be {\em induced by $\breve {\maV}$ in $G$}.

\noindent For any $p \in \ZZ_{++}$, a {\em cycle of length $p$} (or {\em $p$-cycle}, for short) is a graph
$G=(\maV,\maE)$ such that $|\maV|=p$ and any node in $\maV$ has exactly two neighbors. In other words, we can
label the nodes of $G$ as $i_1,i_2,...,i_p$, such that
\[i_1 \v i_2,\,i_2 \v i_3,\, ... \,,i_{p-1}\v i_p\mbox{ and }i_p \v i_1.\]
We say that the $p$-cycle is {\em odd} if its length $p$ is an odd number.

\noindent The {\em complement} graph of $G$ is the graph $\bar
G=(\bar \maV,\bar \maE)$ such that $\bar \maV=\maV$ and $\bar\maE =
\maV^2 \setminus (\mathcal D \cup \maE),$ where $\mathcal D$ is the
diagonal of $\maV^2$, namely, $\mathcal D := \{(i,i) : i \in
\maV\}$. For $q\ge 2$, the graph $G=(\maV,\maE)$ is said to be
$q$-partite, $q \in \ZZ_{++}$, if there exists a partition $\{\maV_i
: 1 \le i \le q\}$ of $\maV$ such that
$$\maE \subset \bigcup_{i,j\in \llbracket 1,q \rrbracket\,: i\ne j} \maV_i \times \maV_j.$$
In other words, in a $q$-partite graph, every edge links two nodes in two distinct subsets of the partition.
A $2$-partite graph is called {\em bipartite}.

The {\em complete graph} $G=(\maV,\maE)$ is such that
$\maE=\maV^2\setminus \mathcal D$. A {\em clique} of a graph $G$ is a complete subgraph of $G$. The graph $G=(\maV,\maE)$
is said to be {\em connected} if for any $i,j \in \maV$, there exists a path from $i$ to $j$, i.e.
a subset $\{i=i_1,i_2,...,i_q=j\} \subset \maV$ such that $i_{\ell} - i_{\ell+1}$ for any
$\ell \in \llbracket 1,q-1 \rrbracket.$

An {\em independent set} of a graph $G$ is a non-empty subset $\maI \subset \maV$ such that $i \pv j$, for all $i,j \in \maI$.
We let $\I(G)$ denote the set of all independent sets of $G$. Notice that when $G$ is simple, any node is an independent set,
so that $\I(G)$ is nonempty. We say that the independent set $\maI$ of $G$ is {\em maximal} if, 
for any $j\in \maV\setminus \maI$, we have $i_j \v j$ for some $i_j$ in $\maI$. In other words, $\maI \cup \{j\}$ is not an independent set,
for any $j \in \maV \setminus \maI$.

\subsection{Matching Queues} \label{subsecModel}

The {\em matching queue} associated with a graph $G = (\maV, \maE)$,
an arrival-rate vector $\lambda := (\lm_1, \dots, \lm_{|\maV|})$ and a matching policy
$\Phi$, is defined as follows. Each node of the simple graph $G$ (which we call {\em matching graph}) is associated with a class of items,
and items of each class $i \in \maV$ arrive to the system in accordance with a Poisson process $N_i$ having intensity $\lm_i > 0$.
We also write
\begin{equation}
\label{eq:deflambda}
\bar\lm := \sum_{i \in \maV} \lm_i  \qandq \bar\lambda_A := \sum_{i\in A} \lambda_i,\quad A \subset \maV.
\end{equation}
We assume that all $|\maV|$ Poisson arrival processes are independent.
Class-$i$ items can be matched with class-$j$ items if and only if $i \v j$, i.e., there is an edge between the two nodes
$i,j \in \maV$. We emphasize that the matching graphs $G$ we consider are simple, so that items from the same class cannot be matched together.

Upon arrival, a class-$i$ item is either matched with exactly one item from a class $j$ such that $i \-- j$, if any such item is available,
or is placed in an infinite buffer. Matched items leave the system immediately.
We refer to the buffer content associated with each class $i$
as the class-$i$ queue, and denote the associated class-$i$ queue process by $Q_i := \{Q_i(t) : t \ge 0\}$. Specifically,
for all $t \ge 0$, $Q_i(t)$ is the number of the class-$i$ items in queue at time $t$.
Let 
\begin{equation}
\label{eq:defQ}
Q=\left(Q_1,...,Q_{|\maV|}\right)
\end{equation}
denote the $|\maV|$-dimensional queue process of the system.
For $t\ge 0$ and $A \subset \maV$, we let $Q_A(t)$ be the
restriction of $Q(t)$ to its coordinates in $A$.

Upon arrival to the system, a class-$i$ item may find several
possible matches, whenever more than one neighboring class has items
queued. A {\em matching policy} is the rule specifying how to
execute matchings in such cases. We say that a matching policy
$\Phi$ is {\em admissible} if matchings always occur when possible,
and decisions are made solely on the value of the queue
process $Q$ at arrival epochs. (We note that a larger class of policies was considered in \cite{MaiMoyal15}.)
Consequently, under an admissible matching policy, the queue process
$Q$ is a CTMC and $Q_i(t)Q_j(t) = 0$ for all $i, j \in \maV$ such
that $i \v j$ and all $t\ge 0$.


An admissible matching policy is of
{\em priority} type if for any node $i$, the set $\maE(i)$ is a-priori ordered: $\maE(i)=\left(i_1, i_2, \dots, i_{|\maE(i)|}\right)$,
so that, at any time $t$ in which a class-$i$ item enters the system, matching occurs with a class-$i_m$ item,
where $m=\min\left\{\ell \in \llbracket 1,|\maE(i)| \rrbracket:\,Q_{i_\ell}(t) > 0\right\}.$
An important example of a non-priority admissible matching policy is
{\em Match the Longest}, denoted by \textsc{ml}, which was
introduced in \cite{BGMa12} for the bipartite matching queue, and in
\cite{MaiMoyal15} for the general matching queue in discrete time.
According to \textsc{ml}, an arriving item of class $i$ is matched
with an item of the class in $\maE(i)$ that has the longest queue at
that time, where ties are broken according to a uniform draw.
%

Clearly, the initial queue length $Q(0)$, together with $G$,
$\lambda$ and the matching policy $\Phi$ fully determine the
distribution of $Q$. We thus characterize the system by the triple
$(G,\lm,\Phi)_{\ssc}$ (where we append the subscript $\ssc$ to
denote a {\em continuous-time} model, as opposed to the one in
discrete time, which will be denoted with a subscript $\sd$).

\paragraph{Stability of a matching queue}
The matching queue $(G,\lm,\Phi)_{\ssc}$ is said to be {\em
stable} if the corresponding CTMC $Q$ is positive recurrent, and unstable
otherwise.
\begin{definition} The {\em stability region} corresponding to
the connected graph $G$ and the matching policy $\Phi$ is the set
\[\Bigl\{\lambda\in \RR_{++}^{|\maV|}:\,(G,\lm,\Phi)_{\ssc}\mbox{ is
stable}\Bigl\}.\]
\end{definition}
We also say that node $i \in \maV$ is unstable if, for some
initial condition, the mean time for its associated queue to empty
is infinite. Otherwise, the node is stable.

\subsection{The Necessary Condition for Stability {\sc Ncond}}
It is natural to ask what is the analogue of \eqref{subcritical} in the context of matching queues.
Clearly, it must hold that, for each node $i$,
\bequ
\label{notNcond}
\lm_i < \bar\lm_{\maE(i)}.
\eeq
(Recall the notational convention in (\ref{eq:deflambda}).) However, it is easy
to see that \eqref{notNcond} can hold for matching queues having
bipartite matching graphs, for example, although such models are
never stable. (See the discussion following the proof of Theorem
\ref{thNcond} below.) Thus, a necessary condition for stability
should be stronger than \eqref{notNcond}.

To gain intuition, we contrast two simple examples for matching
graphs, the triangle and the simplest graph including a triangle,
which we shall call the ``pendant graph'', depicted in Figures
\ref{Fig:triangle} and \ref{Fig:pendant}, respectively. For the
triangle, it is straightforward that the corresponding matching
queue is stable under \eqref{notNcond} for any admissible matching
policy. Indeed, at most one of the three queues is positive at any
given time, and the drift of any positive queue is necessarily
negative under \eqref{notNcond}. Now consider the pendant graph with
the priority matching policy in Figure \ref{Fig:pendant}, under
which node $3$ gives strict priority to nodes $1$ and $2$, i.e. a
class-$3$ arrival who finds items in node $4$ and in one of the
remaining two nodes, say node $1$, will be matched with the
class-$1$ item. This priority policy is depicted by the arrows in
Figure \ref{Fig:pendant}. Under this policy, node $4$ may be
unstable despite the fact that $\lm_4 < \lm_3$. Indeed, for node $4$
to be stable, we must have an adequate number of class-$3$ arrivals
so that, even though $\lm_3 < \lm_1 + \lm_2 + \lm_4$, sufficiently
many class-$3$ items are left to be matched with all the class-$4$
items in the long run. Since many class $1$ and class $2$ items will
be matched with each other, it is intuitively clear that, in
addition to \eqref{notNcond}, we must require that
$$\lm_4 + \lm_1 < \lm_3 + \lm_2 \qandq \lm_4 + \lm_2 < \lm_3 + \lm_1.$$

\begin{figure}[h!]
  \hfill
  \begin{minipage}[t]{.45\textwidth}
    \begin{center}
\begin{tikzpicture}
 \draw[-] (2,2) -- (1,1);
 \draw[-] (1.6,1.6) -- (1.4,1.4);
 \draw[-] (2,2) -- (3,1);
 \draw[-] (2.4,1.6) -- (2.6,1.4);
 \draw[-] (1,1) -- (3,1);
 \fill (2,2) circle (2pt) node[right] {\small{3}} ;
 \fill (1,1) circle (2pt) node[below] {\small{1}} ;
 \fill (3,1) circle (2pt) node[below] {\small{2}} ;
 \end{tikzpicture}
 \caption[smallcaption]{Triangle}
 \label{Fig:triangle}
    \end{center}
  \end{minipage}
  \hfill
  \begin{minipage}[t]{.45\textwidth}
    \begin{center}
      \begin{tikzpicture}
\draw[-] (2,3) -- (2,2);
\draw[-] (2,2) -- (1,1);
\draw[->, thick] (1.6,1.6) -- (1.4,1.4);
\draw[-] (2,2) -- (3,1);
\draw[->, thick] (2.4,1.6) -- (2.6,1.4);
\draw[-] (1,1) -- (3,1);
\fill (2,3) circle (2pt) node[right] {\small{4}} ;
\fill (2,2) circle (2pt) node[right] {\small{3}} ;
\fill (1,1) circle (2pt) node[below] {\small{1}} ;
\fill (3,1) circle (2pt) node[below] {\small{2}} ;
\end{tikzpicture}
\caption[smallcaption]{Pendant graph}
\label{Fig:pendant}
    \end{center}
  \end{minipage}
  \hfill
\end{figure}

This suggests that one needs to consider the arrival rates to
subsets of non-neighboring nodes, and require that those rates are smaller than the arrival rates to the neighborhoods of those subsets.
Therefore, for any matching graph $G$, we define
\bes 
 \textsc{Ncond}_{\ssc}(G) := \Bigl\{\lambda\in \RR_{++}^{|\maV|}: \bar\lambda_\maI < \bar\lambda_{\maE(\maI)} \text{ for all } \maI \in \I(G)\Bigl\},
\ees where we recall that $\I(G)$ is the set of independent sets of
$G$. We say that {\sc Ncond} holds for $G$ and $\lambda$ if \bequ
\label{Ncond1} \lm \in\textsc{Ncond}_{\ssc}(G). \eeq

\begin{theorem}{$($necessary condition for stability of matching queues$)$} \label{thNcond}
Let $G$ be a connected graph and $\Phi$ an admissible matching
policy. Then the stability region corresponding to $G$ and $\Phi$
is included in $\textsc{Ncond}_{\ssc}(G)$.
\end{theorem}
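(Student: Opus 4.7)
The plan is to argue by contraposition: fix an arbitrary independent set $\maI \in \I(G)$ and show that if $\bar\lambda_\maI \ge \bar\lambda_{\maE(\maI)}$, then the CTMC $Q$ of \eqref{eq:defQ} cannot be positive recurrent. The central observation is that, since $\maI$ is independent in $G$, no two class-$\maI$ items are ever matched together, so by time $t$ every class-$\maI$ item that has left the system must have been paired with some class-$\maE(\maI)$ item (in particular $\maI$ and $\maE(\maI)$ are disjoint, so the corresponding Poisson arrival processes are independent). Writing $N_A := \sum_{i\in A} N_i$, which is Poisson of intensity $\bar\lambda_A$ for any $A\subset\maV$, this yields the pathwise lower bound
\begin{equation*}
Q_\maI(t) \;\ge\; Q_\maI(0) + N_\maI(t) - N_{\maE(\maI)}(t), \qquad t \ge 0.
\end{equation*}
Indeed, exactly $N_\maI(t)$ class-$\maI$ items have arrived by time $t$, each of them is either still counted in $Q_\maI(t)$ or has been matched, and at most $N_{\maE(\maI)}(t)$ such matchings can have occurred because each class-$\maE(\maI)$ arrival can be paired with at most one class-$\maI$ item. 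The validity of this bound does not depend on the admissible policy $\Phi$, so the argument applies uniformly.

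Once the bound is in place, the reasoning splits into two cases. In the strict case $\bar\lambda_\maI > \bar\lambda_{\maE(\maI)}$, the strong law of large numbers applied to $N_\maI$ and $N_{\maE(\maI)}$ yields $Q_\maI(t)/t \to \bar\lambda_\maI - \bar\lambda_{\maE(\maI)} > 0$ almost surely, so $Q_\maI(t)\to\infty$ a.s., which rules out positive recurrence of $Q$.

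The equality case $\bar\lambda_\maI = \bar\lambda_{\maE(\maI)}$ is the most delicate point and I expect it to be the main obstacle, since the lower bound merely oscillates rather than diverging. Here, $M(t) := N_\maI(t) - N_{\maE(\maI)}(t)$ is a mean-zero difference of two independent Poisson processes with variance $(\bar\lambda_\maI + \bar\lambda_{\maE(\maI)})\,t \to \infty$, and by the central limit theorem $M(t)/\sqrt{t}$ converges in distribution to a centered Gaussian. Consequently $\liminf_{t\to\infty} P(M(t) > a) \ge 1/2$ for every fixed $a\in\RR$, and the lower bound forces $\liminf_{t\to\infty} P(Q_\maI(t) > a) \ge 1/2$ for every $a$. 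The family $\{Q_\maI(t):t\ge 0\}$ is therefore not tight, contradicting positive recurrence of $Q$ (whose time-$t$ marginals would otherwise converge in distribution, hence be tight). Combining both cases yields that $\lambda \in \textsc{Ncond}_{\ssc}(G)$ whenever $(G,\lambda,\Phi)_{\ssc}$ is stable, which is the claim.
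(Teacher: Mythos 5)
Your proof is correct, but it takes a genuinely different route from the paper's. The paper reduces to the discrete-time model via uniformization (observing that $Q(t) = U(N(t))$, so $Q$ and the embedded DTMC $U$ share the same recurrence class) and then invokes Proposition 2 of \cite{MaiMoyal15} wholesale. You instead give a direct, self-contained continuous-time argument: from the empty initial state, the pathwise inequality
$\sum_{i\in\maI} Q_i(t) \ge N_\maI(t) - N_{\maE(\maI)}(t)$
holds for any admissible $\Phi$, and then the SLLN (strict case) or CLT (equality case) shows the family $\{Q(t) : t\ge 0\}$ cannot be tight, contradicting positive recurrence. This avoids the external reference entirely, and in fact your argument, combined with the uniformization step, would also reprove the discrete-time result cited by the paper. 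Two cosmetic remarks: in the paper's notation $Q_\maI(t)$ is the restriction \emph{vector}, so you should write $\sum_{i\in\maI} Q_i(t)$ for the scalar bound; and your bound, as stated with a general $Q_\maI(0)$ on the right, tacitly assumes $Q_{\maE(\maI)}(0)=0$ --- otherwise those initial items also provide matches. It suffices to take $Q(0)=0$, which is legitimate since positive recurrence is a class property and $0$ is reachable. Your CLT tightness argument for the boundary case $\bar\lambda_\maI = \bar\lambda_{\maE(\maI)}$ is a nice touch and is indeed where a crude SLLN argument would not suffice.
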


Before proving Theorem \ref{thNcond}, let us briefly describe the (discrete-time) stochastic matching model introduced
in \cite{MaiMoyal15}. Given a graph $G$ and an admissible matching policy $\Phi$,
the discrete-time stochastic matching model $\left(G,\mu,\Phi\right)_{\sd}$ is defined similarly to the matching queue $\left(G,\lm,\Phi\right)_{\ssc}$,
except that items enter the system one by one, at any discrete time $n \in \ZZ_{++}$.
Assuming that the sequence of classes of the items entering the system
is independent and identically distributed with a common probability measure $\mu$ on $\maV$,
$\left(G,\mu,\Phi\right)_{\sd}$ is represented by the $\ZZ_+^{|\maV|}$-valued Discrete-Time Markov Chain (DTMC)
$U := \{U(n) : n \ge 1\}$, where for any $i \in \maV$ and any $n \in \ZZ_{++}$,
$U_i(n)$ counts the number of items of class $i$ in the buffer at time $n$.
Then, letting $\{N(t) : t \ge 0\}$ denote the superposition of  
of the Poisson arrival processes $N_1,...,N_{|\maV|}$, we have $Q(t) = U(N(t))$, $t \ge 0$, due to uniformization, implying that $Q$ is positive recurrent if and only if $U$ is. 
\begin{proof}[Proof of Theorem \ref{thNcond}]
Define the following set of probability measures $\mu$ on $\maV$,
\begin{equation*} 
\textsc{Ncond}_{\sd}(G) := \Bigl\{\mu\mbox{ with support }\maV : \mu(\maI) < \mu(\maE(\maI)) \text{ for all } \maI \in \I(G)\Bigl\},
\end{equation*}
and for any $\lambda$, define the probability measure
\begin{equation}
\label{eq:defmucont}
\mu_\lambda(i) := \lambda_i/\bar\lambda, \quad i\in\maV.
\end{equation}
Then for any graph $G$ it holds that, if
$\lambda \in \textsc{Ncond}_{\ssc}(G)$, then $\mu_\lambda \in \textsc{Ncond}_{\sd}(G)$.
The statement of the theorem thus follows from Proposition 2 in \cite{MaiMoyal15}, and the fact that $Q$ is positive recurrent if and only if
the DTMC $U$ corresponding to $\left(G,\mu,\Phi\right)_{\sd}$ is.
\end{proof}

An immediate consequence of Theorem \ref{thNcond} is that matching queues $(G,\lm,\Phi)_{\ssc}$
having a connected bipartite graph $G$ are never stable. Indeed, if $\maI_1\cup \maI_2$ denotes the bipartition of $\maV$ into maximal independent sets, then
(\ref{Ncond1}) implies that $\bar\lm_{\maI_1} < \bar\lm_{\maI_2}$ and $\bar\lm_{\maI_1} > \bar\lm_{\maI_2}$, so that $\textsc{Ncond}_{\ssc}(G)$ is empty.
In $(i)$ of Theorem \ref{thm:stabCont} below we show that the converse of this result also holds.

In ending we remark that (\ref{Ncond1}) is equivalent to \eqref{notNcond} for the triangle in Figure \ref{Fig:triangle} or, more generally, for any complete
graph. Aside from this case, condition \eqref{Ncond1} is always strictly stronger (and harder to verify) than \eqref{notNcond}.
It is therefore significant that it can be verified in $O(|\maV|^3)$ time; see Proposition 1 in \cite{MaiMoyal15}.

\section{The Main Result} \label{secMain}

\subsection{Separable graphs}
\label{subsec:separable}
 The notion of a {\em separable graph},
introduced in \cite{MaiMoyal15}, will play a crucial role in what
follows.
\begin{definition}\label{def:sep}
A graph $G=\left(\maV,\maE\right)$ is said to be {\em separable of order $q$}, $q\ge 2$, if there exists a
partition of $\maV$ into maximal independent sets $\maI_1,\dots, \maI_q$, such that $u \v v$ for all $u \in \maI_i$ and $v \in \maI_j$, for all $i \ne j$.
\end{definition}

Equivalently, $G$ is separable of order $q$ if its complement graph
can be partitioned into $q$ disjoint cliques. Notice that a
separable graph of order $2$ is bipartite, whereas a separable graph
or order $3$ or more is non-bipartite.

As we now demonstrate, separable and complete
graphs are closely related. First observe that, for any $q
> 0$, the complete graph of size $q$ is separable of order $q$.
Conversely, any separable graph of order $q$ can be related to the
complete graph of size $q$ in the following way. Let
$G=\left(\maV,\maE\right)$ be a separable graph of order $q$, and
$\maI_1,...,\maI_q$ be its maximal independent sets. Observe
that, as $G$ is separable, the binary relation "$\pv$" is an
equivalence relation in $\maV$. If we `contract' $G$ by ``merging''
all the nodes in each equivalence class (i.e., maximal independent
set), so that each node in the contracted graph represents an
independent set in $G$, and merge all the edges emanating from
merged nodes that point to the same nodes, we obtain the complete
graph $\tilde G$ of size $q$; see Figure \ref{Fig:separable}.

 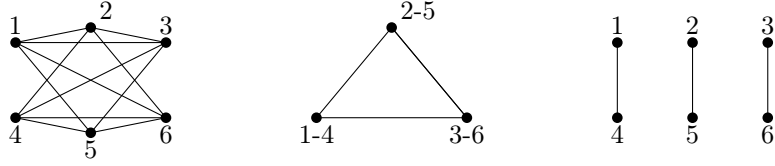
\begin{figure}[h!]
 \begin{center}
 \begin{tikzpicture}
\draw[-] (-2,1) -- (-1,-0.2); \draw[-] (-2,1) -- (0,0);
\draw[-](-2,1) -- (-1,1.2); \draw[-] (-2,1) -- (0,1); \draw[-]
(-1,1.2) -- (-2,0); \draw[-] (-1,1.2) -- (0,0); \draw[-] (-1,1.2) --
(0,1); \draw[-] (0,1) -- (-2,0); \draw[-] (0,1) -- (-1,-0.2);
\draw[-] (-2,0) -- (-1,-0.2); \draw[-] (-2,0) -- (0,0); \draw[-]
(-1,-0.2) -- (0,0); \fill (-2,1) circle (2pt) node[above]
{\small{1}} ; \fill (-1,1.2) circle (2pt) node[above right]
{\small{2}} ; \fill (0,1) circle (2pt) node[above] {\small{3}} ;
\fill (-2,0) circle (2pt) node[below] {\small{4}} ; \fill (-1,-0.2)
circle (2pt) node[below] {\small{5}} ; \fill (0,0) circle (2pt)
node[below] {\small{6}} ;
 \draw[-](3,1.2) -- (2,0); \draw[-] (3,1.2) -- (4,0); \draw[-]
(2,0) -- (4,0); \draw[-] (3,1.2) -- (4,0); \fill (3,1.2) circle
(2pt) node[above right]{\small{2-5}}; \fill (2,0) circle (2pt)
node[below] {\small{1-4}} ; \fill (4,0) circle (2pt) node[below]
{\small{3-6}} ;
\draw[-] (6,1) -- (6,0);
\draw[-] (7,1) -- (7,0);
\draw[-] (8,1) -- (8,0);
\fill (6,1) circle (2pt) node[above] {\small{1}} ;
\fill (7,1) circle (2pt) node[above] {\small{2}} ;
\fill (8,1) circle (2pt) node[above] {\small{3}} ;
\fill (6,0) circle (2pt) node[below] {\small{4}} ;
\fill (7,0) circle (2pt) node[below] {\small{5}} ;
\fill (8,0) circle (2pt) node[below] {\small{6}} ;
 \end{tikzpicture}
 \caption{Separable graph of order $3$ (left); its merged complete graph (middle), and
 its complement graph (right).}
 \label{Fig:separable}
 \end{center}
 \end{figure}

\subsection{Preliminaries} \label{subsecPre}
It follows from Theorem \ref{thNcond} that
(\ref{Ncond1}) is a necessary condition for stability of the system
, regardless of the matching policy $\Phi$.
It is then natural to investigate what are
the matching policies for which 
(\ref{Ncond1}) is also a {\em sufficient} condition for stability of the matching queue.

\begin{definition}
Let $G$ be a connected graph. An admissible matching policy $\Phi$
on $G$ is {\em maximal} if the stability region corresponding
to $G,\Phi$ coincides with $\textsc{Ncond}_{\ssc}(G)$.
\end{definition}

\begin{definition}
A connected graph $G$ is said to be
\begin{itemize}
\item {\em matching-stable} if $\textsc{Ncond}_{\ssc}(G)$ is non-empty and
all admissible matching policies on $G$ are maximal;
\item {\em matching-unstable} if the set $\textsc{Ncond}_{\ssc}(G)$ is empty.
\end{itemize}
\end{definition}
In other words, if $G$ is matching-stable the matching queue
$(G,\lambda,\Phi)_{\ssc}$ is stable for any admissible $\Phi$ and
any $\lambda \in \textsc{Ncond}_{\ssc}(G)$. If $G$ is
matching-unstable, the matching queue $(G,\lambda,\Phi)_{\ssc}$ is
unstable for any admissible $\Phi$ and any arrival-rate vector
$\lambda$. Clearly, a graph $G$ might be neither matching-stable nor
matching-unstable.

We have the following consequence to Theorem 2 in \cite{MaiMoyal15}.
\begin{theorem}
\label{thm:stabCont}
For any connected graph $G$ the following hold.\\
\noindent (i) $G$ is matching-unstable if and only if it is bipartite;\\
\noindent (ii) If $G$ is non-bipartite, then the discipline
{\sc ml} is maximal; \\
\noindent (iii) If $G$ is separable of order $q \geq 3$, then it is matching-stable. 
\end{theorem}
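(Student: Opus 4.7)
My plan is to deduce all three parts of Theorem \ref{thm:stabCont} from Theorem~2 of \cite{MaiMoyal15} via the uniformization identity $Q(t)=U(N(t))$ invoked in the proof of Theorem \ref{thNcond}. The first step is to record that the map $\lambda \mapsto \mu_\lambda$ of \eqref{eq:defmucont} is a bijection (modulo the global scale factor $\bar\lambda$) between positive rate vectors and fully-supported probability measures on $\maV$, and that dividing each defining inequality of $\textsc{Ncond}_{\ssc}(G)$ by $\bar\lambda$ yields precisely the defining inequalities of $\textsc{Ncond}_{\sd}(G)$. Hence $\lambda \in \textsc{Ncond}_{\ssc}(G)$ if and only if $\mu_\lambda \in \textsc{Ncond}_{\sd}(G)$. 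Together with uniformization, this reduces questions about the continuous-time stability region to their discrete-time counterparts.

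For part (i), the direction ``bipartite $\Rightarrow$ matching-unstable'' is already contained in the paragraph following the proof of Theorem \ref{thNcond}: the two chains of inequalities obtained from any bipartition are contradictory, so $\textsc{Ncond}_{\ssc}(G)$ is empty. For the converse, I will invoke the analogous statement in \cite{MaiMoyal15}, which supplies a probability measure $\mu \in \textsc{Ncond}_{\sd}(G)$ whenever $G$ is non-bipartite; setting $\lambda_i := \mu(i)$ then produces a vector in $\textsc{Ncond}_{\ssc}(G)$, establishing non-emptiness.

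For parts (ii) and (iii), the necessary direction is Theorem \ref{thNcond}. For the sufficient direction, I fix $\lambda \in \textsc{Ncond}_{\ssc}(G)$ and use the above equivalence to obtain $\mu_\lambda \in \textsc{Ncond}_{\sd}(G)$. I then quote the discrete-time result of \cite{MaiMoyal15} to conclude that the DTMC $U$ associated with $(G,\mu_\lambda,\Phi)_{\sd}$ is positive recurrent: under $\Phi = \textsc{ml}$ when $G$ is non-bipartite for (ii), and under any admissible $\Phi$ when $G$ is separable of order at least $3$ for (iii). Uniformization then transfers positive recurrence from $U$ back to $Q$, so the corresponding matching policy is maximal in the continuous-time sense.

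The main point requiring care, rather than a genuine obstacle, is the correspondence between admissibility on the two sides. In the continuous-time model, admissibility demands that matchings occur whenever possible and that the decision at an arrival epoch be measurable with respect to $Q$; on the discrete-time side, the same is imposed at each step. Since the superposition $N$ of the independent class-$i$ Poisson processes has i.i.d.\ class marks distributed as $\mu_\lambda$, a continuous-time admissible policy $\Phi$ induces a discrete-time admissible policy of the same name, with \textsc{ml} translating verbatim. Verifying this translation cleanly is what legitimizes the appeal to Theorem~2 of \cite{MaiMoyal15} and closes the argument without any further fluid-limit or Lyapunov analysis at this stage.
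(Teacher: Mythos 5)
Your proposal is correct and follows essentially the same route as the paper: both reduce the continuous-time claims to the discrete-time results of \cite{MaiMoyal15} via the uniformization identity $Q(t)=U(N(t))$ and the scaling correspondence $\lambda\mapsto\mu_\lambda$ between $\textsc{Ncond}_{\ssc}(G)$ and $\textsc{Ncond}_{\sd}(G)$, invoking Theorem~1 there for non-emptiness in part (i) and the statements corresponding to (16)--(18) there for the three assertions. The only difference is that you spell out the admissibility translation explicitly, which the paper leaves implicit.
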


\begin{proof}
The arguments in the proof of Theorem \ref{thNcond} are again
employed to apply the results for the discrete-time model in
\cite{MaiMoyal15} to the continuous-time model considered here. In
particular, by Theorem 1 in \cite{MaiMoyal15}, the set {\sc
Ncond}$_{\ssc}(G)$ is non-empty if and only if $G$ is non-bipartite.
Hence, the statements $(i)$, $(ii)$ and $(iii)$ follow,
respectively, from (16), (17) and (18) in \cite{MaiMoyal15}.
\end{proof}

We make the following observation: For a matching
queue on a complete graph, all admissible matching policies are
equivalent. Indeed, at most one class of items can be present in
queue at any given time, so that an arriving item has no more than one choice for matching.
Thus, the fact that all non-bipartite separable graphs are
matching-stable is not surprising, given their relation to complete graphs, as described above.

Let $G$ be a separable graph of order $q \ge 3$. Consider a matching
queue $(G,\lm,\Phi)_{\ssc}$ on $G$, where $\Phi$ is admissible, and
the matching queue $(\tilde G,\tilde \lambda,\tilde \Phi)_{\ssc}$,
where $\tilde G$ is the ``merged'' complete graph of size $q$
obtained from $G$, as described in Section \ref{subsec:separable},
$\tilde\Phi$ is an arbitrary admissible policy and
\begin{equation*}
 \tilde \lambda_j = \bar \lambda_{\maI_j},\quad j\in \llbracket 1,q\rrbracket.
\end{equation*}
(We add a tilde to all parameters associated with $\tilde G$.) Since
$\tilde G$ is complete, the matching policy $\tilde \Phi$ is
irrelevant, as long as it is admissible. In $(G,\lm,\Phi)_{\ssc}$,
only items of classes belonging to the same maximal independent set
can be present in queue at any given time. Fix a time point $t$ at
which the system is non-empty, and let $\maI_{\ell}$ be the (unique)
maximal independent set having a non-empty queue at $t$. Suppose
that an item enters the system at $t$. We have the following
alternatives:
\begin{itemize}
\item If the new arriving item is of a class belonging to $\maI_{\ell}$,
then no matching occurs and the item joins the queue.
\item If the new arrival is an item of a class $k \in \maI_{m}$, $\ell \ne m$, then
(no matter what the matching policy $\Phi$ is), the entering item
will be matched with an item from a class in $\maI_{\ell}$, and
$\Phi$ only determines the class of its match in $\maI_{\ell}$ (as
several classes in $\maI_{\ell}$ may have a non-empty queues at
$t$).
\end{itemize}

\noindent
Consequently, if the initial conditions satisfy
$\sum_{k\in \maI_j} Q_k(0) \deq \tilde Q_j(0)$, $j \in \llbracket
1,q\rrbracket$,
then
\begin{equation}
\label{eq:sepcompQ} \left\{\sum_{k\in \maI_j} Q_k(t) : t \ge
0\right\} \deq \{\tilde Q_j(t) : t\ge 0\}, \quad j\in \llbracket 1,q
\rrbracket.
\end{equation}

We conclude that, for {\em any} matching policy $\Phi$, if one
adopts a `macroscopic' view of the matching queue
$(G,\lambda,\Phi)_{\ssc}$, by only keeping track of the maximal
independent set present in queue at any time (there is at most one),
and not of the particular classes of the items, then the matching
queue on $G$ amounts to a matching queue on $\tilde G$ having an
arbitrary matching policy $\tilde{\Phi}$. As the latter model is
stable at least for the policy {\sc ml} since $\tilde G$ is
non-bipartite (by assertion (ii) of Theorem \ref{thm:stabCont}
above), it is stable under any policy $\tilde{\Phi}$. From
(\ref{eq:sepcompQ}), the matching queue $(G,\lambda,\Phi)_{\ssc}$ is
stable regardless of the matching policy $\Phi$, which is exactly
assertion (iii) of Theorem \ref{thm:stabCont}.

\subsection{The Main result} \label{subsec:main}
Assertion (i) in Theorem \ref{thm:stabCont} identifies the
class of graphs rendering any matching queue
unstable, regardless of the matching policy. By Assertion (ii) of the theorem, any
matching queue $\left(G,\lambda,\sM\sL\right)_{\ssc}$ on a
non-bipartite graph $G$ is stable, provided $\lambda$ satisfies $\textsc{Ncond}$.
Assertion (iii) presents a class of graphs (the non-bipartite
separable ones) that are matching-stable, namely, for any matching
policy and arrival-rate vector satisfying {\sc Ncond} the system is
stable.
Together, these results raise the question of whether the choice of the matching policy matters in terms of stability
for graphs that are non-separable and non-bipartite, i.e., for graphs for which
at least the discipline {\sc ml} is maximal. 
The simplest such graph, namely, the pendant
graph depicted in Figure \ref{Fig:pendant}, was considered in
Section 5 of \cite{MaiMoyal15},
where it is shown that this graph is not
matching-stable. In particular, it was shown in \cite{MaiMoyal15} that, for a symmetric matching queue (with $\lm_1 = \lm_2$)
there exists a matching policy for which the stability
region is strictly included in {\sc Ncond}$(G)$. 
Our main result, Theorem \ref{thm:main} below, provides a significant generalization of this result for a much larger class of graphs;
en route, we also prove generalized versions of the results in \cite[Section 5]{MaiMoyal15}.

To present our main result,
let $\mathscr{G}_7$ denote the set of all connected graphs inducing an odd cycle of size $7$ or more,
but no $5$-cycle and no pendant graph, and let $\mathscr{G}_7^c$ denote its complement in the set of connected graphs.


\begin{theorem} \label{thm:main}
The only matching-stable graphs in $\mathscr{G}_7^c$ are separable of order $3$ or more.
\end{theorem}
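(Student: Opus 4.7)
The plan is to prove the contrapositive: any $G\in\mathscr{G}_7^c$ that is not separable of order $q\ge 3$ fails to be matching-stable; combined with Theorem \ref{thm:stabCont}(iii) this yields the theorem. If $G$ is bipartite, Theorem \ref{thm:stabCont}(i) gives $\textsc{Ncond}_{\ssc}(G)=\emptyset$ and the claim is immediate, so I assume $G$ is connected, non-bipartite, and non-separable.

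The core step would be a purely graph-theoretic structural claim: such a $G$ in $\mathscr{G}_7^c$ must induce either a pendant graph or a 5-cycle. To prove it, I would suppose $G$ induces no pendant; since the pendant graph is the ``paw'', $G$ is paw-free, so by Olariu's classical theorem each connected component of $G$ is either triangle-free or complete multipartite. Complete multipartite graphs are exactly the separable ones (their complement is a disjoint union of cliques), so non-separability forces $G$ to be triangle-free. Being non-bipartite, $G$ admits a shortest odd cycle of length $\ge 5$, which is induced (otherwise a chord would produce a shorter odd cycle). If that length equals $5$ we are done; otherwise $G$ induces an odd cycle of length $\ge 7$ while inducing no 5-cycle and no pendant, placing $G\in\mathscr{G}_7$ and contradicting $G\in\mathscr{G}_7^c$.

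Combining this dichotomy with Proposition \ref{propPendant_iff} and Corollary \ref{cor:5cycle} of Section \ref{sec:Examples}—which, via the FWLLN (Theorem \ref{thFWLLN}) and the instability Lemma \ref{lmUnstable}, establish that the pendant graph and the 5-cycle themselves are not matching-stable—reduces the problem to proving a \emph{lifting lemma}: if $G$ has an induced subgraph $H$ that is not matching-stable, then neither is $G$. To carry out the lifting, I would take a policy $\Phi_H$ and rate vector $\lambda_H\in \textsc{Ncond}_{\ssc}(H)$ destabilizing $H$, extend $\lambda_H$ by sufficiently small positive rates on the vertices of $G$ outside $H$ to obtain $\lambda_G\in \textsc{Ncond}_{\ssc}(G)$, and extend $\Phi_H$ to an admissible priority policy $\Phi_G$ that forbids items of classes outside $H$ from matching with classes in $H$ except when no other match is available. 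A sample-path coupling should then show that the queues for the vertices of $H$ in $(G,\lambda_G,\Phi_G)_{\ssc}$ dominate those of the unstable $(H,\lambda_H,\Phi_H)_{\ssc}$, yielding instability of $G$.

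The main obstacle will be executing the lifting cleanly. $\textsc{Ncond}_{\ssc}(G)$ imposes more constraints than $\textsc{Ncond}_{\ssc}(H)$, because independent sets of $G$ may contain vertices outside $H$ whose neighborhoods spill into the vertex set of $H$; hence $\lambda_H$ may have to be perturbed within the open unstable region provided by Proposition \ref{propPendant_iff} or Corollary \ref{cor:5cycle} before extending to $G$. Furthermore, the priority structure of $\Phi_G$ must rule out the possibility that external arrivals ``rescue'' an unstable $H$-queue by absorbing excess matches, and the sample-path coupling must accommodate the averaging-principle time-scale separation that underpins the fluid analysis of the two base graphs.
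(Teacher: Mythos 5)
Your overall architecture matches the paper's: reduce to the graph-theoretic dichotomy (pendant or induced $5$-cycle), invoke the instability of those two base cases via the fluid-limit analysis, and lift instability to the ambient graph. Your structural step is actually a nice variant: the paper proves by a direct induction (completing a separable core one vertex at a time and showing cases (a)/(c) produce a pendant) that a connected, non-bipartite, non-separable graph induces a pendant or an odd cycle of length $\ge 5$; you instead invoke Olariu's paw-free theorem and the identification ``complete multipartite $\Leftrightarrow$ separable,'' which is shorter and correct (the paw is exactly the paper's pendant graph). Either path gives Lemma \ref{Lm:BipSepCycles}(i).

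The gap is in the lifting step, and it is a real one. You propose a sample-path coupling showing that the queues on the vertices of $H$ in $(G,\lambda_G,\Phi_G)_{\ssc}$ \emph{dominate} those of the standalone $(H,\lambda_H,\Phi_H)_{\ssc}$. That domination does not hold: if $i_0$ (the unstable node of $H$) has a neighbor $j$ in $\maV\setminus\breve\maV$, then whenever a class-$j$ item arrives and only $i_0$ has items in queue among $j$'s neighbors, admissibility forces a match, draining $Q_{i_0}$. No priority ordering can prevent this, because ``match with $H$ only when no other match is available'' still requires matching when $H$ is the only option. So $Q^G_{i_0}$ can be strictly smaller than $Q^H_{i_0}$ on a set of positive probability, and a straightforward $\ge_{st}$ coupling is false. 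The paper's resolution is qualitatively different: Lemma \ref{lemma:Inclusion1} establishes $1$-Lipschitz (non-expansive) dependence on the initial state under a fixed input, and Lemma \ref{LemInclusion2} then shows that the $\ell_1$-difference between the $G$-system restricted to $\breve\maV$ and the disconnected system $\tilde G$ is bounded by the arrival count $\hat N(t)$ to $\hat\maV$ --- a \emph{bounded-perturbation} estimate, not a domination. Because the total external rate $\bar\lambda_{\hat\maV}$ is chosen at most $\gamma=\tfrac12(\tau\wedge\beta)$, the inequality $Q_{i_0}\ge_{st}\tilde Q_{i_0}-\hat N$ still has strictly positive fluid drift $\beta-\gamma>0$, which is what pushes $Q_{i_0}$ to $\infty$. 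Without this non-chaoticity lemma (or an equivalent estimate that controls the \emph{bidirectional} effect of cross-boundary matches), your proposal cannot close, and you have essentially flagged this yourself as the ``main obstacle'' without supplying the needed idea.
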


In other words, except for the special case of graphs inducing an
odd cycle of size 7 or more, but {\em no pendant graph and no
5-cycle}, the {\em only} matching-stable graphs are the
non-bipartite separable graphs (i.e., separable graphs of order $3$
or more). Therefore, separability of order at least $3$ is not only
{\em sufficient}, but also {\em necessary}, at least in $\mathscr
G_7^c$, for the stability of any matching queue under
\textsc{Ncond}. We conjecture that, among connected non-bipartite
graphs, separability and matching-stability are equivalent, or in
other words, that no graph in $\mathscr G_7$ is matching-stable.
(The two statements are equivalent since all graphs in $\mathscr
G_7$ are non-separable; see Lemma \ref{Lm:BipSepCycles}(ii) below.)
Even though we were not able to prove this result, we provide key
steps in that direction; see Section \ref{secSummary} below.

Applying again the arguments of the proof of Theorem \ref{thNcond}, we obtain the following immediate corollary to Theorem \ref{thm:main}.

\begin{corollary}
Theorem \ref{thm:main} also holds for the discrete-time matching
model. In particular we have the following partial converse of
assertion (18) in \cite{MaiMoyal15}: if $G \in \mathscr G_7^c$ is
such that any discrete-time matching model $(G,\mu,\Phi)_{\ssc}$ is
stable for $\mu \in \textsc{Ncond}_{\sd}(G)$, then $G$ is separable
of order $q \ge 3$.
\end{corollary}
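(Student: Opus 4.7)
The plan is to obtain the corollary as a direct transfer from continuous to discrete time, by invoking the same uniformization correspondence that was already used in the proof of Theorem \ref{thNcond}. I would argue by contrapositive: assume $G \in \mathscr G_7^c$ is not separable of order $q \ge 3$, and exhibit some probability measure $\mu \in \textsc{Ncond}_{\sd}(G)$ and admissible policy $\Phi$ for which the DTMC $U$ associated with $(G,\mu,\Phi)_{\sd}$ fails to be positive recurrent.

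First I would apply Theorem \ref{thm:main}, which furnishes an arrival-rate vector $\lambda \in \textsc{Ncond}_{\ssc}(G)$ and an admissible policy $\Phi$ such that the CTMC $Q$ of $(G,\lambda,\Phi)_{\ssc}$ is not positive recurrent. Set
\begin{equation*}
\mu := \mu_\lambda, \qquad \mu_\lambda(i) = \lambda_i/\bar\lambda, \quad i \in \maV,
\end{equation*}
as in \eqref{eq:defmucont}. The argument already displayed in the proof of Theorem \ref{thNcond} shows that $\lambda \in \textsc{Ncond}_{\ssc}(G)$ implies $\mu_\lambda \in \textsc{Ncond}_{\sd}(G)$; indeed, dividing both sides of $\bar\lambda_{\maI} < \bar\lambda_{\maE(\maI)}$ by $\bar\lambda$ gives $\mu(\maI) < \mu(\maE(\maI))$ for every $\maI \in \I(G)$.

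Next I invoke the uniformization identity $Q(t) = U(N(t))$, $t \ge 0$, where $N$ is the superposition of the independent Poisson processes $N_1,\dots,N_{|\maV|}$ and hence is a Poisson process of rate $\bar\lambda$ independent of the class sequence (which is i.i.d.\ with law $\mu_\lambda$). Because $N(t) \to \infty$ almost surely and the embedded jump chain of $Q$ coincides in distribution with $U$ under $\mu_\lambda$ and the same admissible policy $\Phi$, the CTMC $Q$ is positive recurrent if and only if the DTMC $U$ is. Consequently, since $Q$ is not positive recurrent by construction, neither is $U$, so $(G,\mu,\Phi)_{\sd}$ is unstable with $\mu \in \textsc{Ncond}_{\sd}(G)$, contradicting the hypothesis of the corollary.

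There is no serious obstacle here since both directions of the NCOND-correspondence and the uniformization equivalence are essentially already established in the paper; the only thing to verify carefully is that the specific admissible policy $\Phi$ produced by Theorem \ref{thm:main} for the continuous-time model is the same admissible policy that drives the discrete-time chain (which is automatic, as admissibility depends only on the queue values at arrival epochs, a property shared by both $Q$ and $U$). The first assertion of the corollary, that Theorem \ref{thm:main} holds verbatim in discrete time, follows by the same two-way correspondence: separable graphs of order $q\ge 3$ are matching-stable in discrete time by assertion (18) in \cite{MaiMoyal15}, while the above argument rules out discrete-time matching-stability for any other graph in $\mathscr G_7^c$.
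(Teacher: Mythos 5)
Your proof is correct and follows essentially the same route as the paper, whose own justification for this corollary is the one-line remark ``Applying again the arguments of the proof of Theorem \ref{thNcond}''; you simply spell out the uniformization transfer ($Q(t)=U(N(t))$, constant jump rate $\bar\lambda$, so $Q$ positive recurrent iff $U$ is) together with the observation that $\lambda\in\textsc{Ncond}_{\ssc}(G)$ gives $\mu_\lambda\in\textsc{Ncond}_{\sd}(G)$. The only cosmetic point is that the contrapositive should tacitly be restricted to non-bipartite $G$ (so that $\textsc{Ncond}_{\sd}(G)\neq\emptyset$ and Theorem \ref{thm:main} actually furnishes a $\lambda$), a caveat the paper's statement leaves implicit as well.
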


\subsection{Strategy of the Proof of Theorem \ref{thm:main}} \label{secStrategy}
To prove Theorem \ref{thm:main}, we fix a non-bipartite and {\em
non-separable} graph $G$ in $\mathscr G_7^c$, and show the existence of a non-maximal {\em priority} matching policy $\Phi$.

The proof hinges on the following fact, which will be proved in
Section \ref{secInduced} (statement $(i)$ in Lemma
\ref{Lm:BipSepCycles}): {\em any connected, non-bipartite and
non-separable graph induces a pendant graph or an odd cycle of
length $5$ or more.} Consequently, as $G$ belongs to $\mathscr
G_7^c$, it induces a graph $\breve G$ which is either a pendant
graph or a 5-cycle. The remainder of the proof follows two main
steps:
\begin{enumerate}
\item In Section \ref{sec:Examples} we construct a non-maximal matching policy $\breve \Phi$
on the induced graph $\breve G$ (addressing successively the cases
$\breve G=$ pendant graph and $\breve G=$ 5-cycle), by providing an
arrival-rate vector $\breve \lambda \in \textsc{Ncond}_{\ssc}(\breve
G)$ such that $\left(\breve G, \breve \lambda, \breve
\Phi\right)_{\ssc}$ is unstable. In both cases, the instability of
the system is shown using the fluid-limit arguments developed in
Section \ref{secFluidStable}.
\item We then prove that the instability of the matching queue on the
induced graph $\breve G$ implies instability of the matching queue
on the inducing graph $G$, by showing a ``non-chaoticity" property
in Section \ref{subsec:nonchaotic}. In particular, we show that the
influence of the arrivals to nodes of the complement of $\breve G$
in $G$ can be bounded such that the unstable node in $\left(\breve
G, \breve \lambda, \breve \Phi\right)_{\ssc}$ remains unstable in
$\left(G,\lambda,\Phi\right)_{\ssc}$, for a well-chosen arrival-rate
vector $\lambda \in \textsc{Ncond}_{\ssc}(G)$ and a
well-chosen matching policy $\Phi$.
\end{enumerate}





\section{Fluid Stability} \label{secFluidStable}

We now take a detour to develop the fluid limit which will be used in the proof of Theorem \ref{thm:main}.
Throughout this section, we fix a matching queue $(G,\lm,\Phi)_{\ssc}$, where $\Phi$ is of priority type, so that
$Q$ in (\ref{eq:defQ}) is a CTMC with state space 
\bequ \label{stateSpace}
\mathbb{G} := \Bigl\{z \in \ZZ^{|\maV|}_+ :\, z_iz_j=0,\,\mbox{ for any }i\in \maV\mbox{ and }j\in\maE(i)\Bigl\}.
\eeq

\subsection{Sample-Path Representation}

Before introducing the FWLLN for matching queues under priority policies, it is helpful to consider the
sample-path representation of the CTMC $Q$. To that end, note that
for each $i \in \maV$, $Q_i(t)$ increases by $1$ at time $t$ if
there is an arrival to node $i$ at $t$ and $Q_j(t) = 0$ for all $j
\in \maE(i)$; $Q_i(t)$ decreases by $1$ at time $t$ (when it is
positive) if there is an arrival to one of the neighbors $j \in
\maE(i)$, and all the buffers in $\maE(j)$ to which $j$ gives a
higher priority are empty. To express these dynamics, we introduce
the following subsets of $\mathbb G$: for any $i \in \maV$, we let
\bequ \label{eqAcceptSets} \bsplit
\maN_i & := \left\{z\in \mathbb G;\,z_i>0\right\}; \\
\maO_i & := \{z \in \mathbb G : z_j = 0 \qforallq j \in \maE(i)\};  \\
\maP_{j}(i) & := \{z \in \mathbb G : z_k = 0 \qforallq k \in \Phi_{j}(i)\},~ j\in\maE(i),
\end{split}
\eeq where $\Phi_{j}(i)$ is the list of all the neighbors of node
$j$ to which node $j$ gives a higher priority than to node $i$,
namely, \begin{equation} \label{eq:Phiji} \Phi_j(i) = \Bigl\{k \in
\maE(j);\,j\mbox{ gives priority to } k \mbox{ over } i\mbox{
according to }\Phi\Bigl\}.
\end{equation}

Let $\mathscr A$ denote the infinitesimal generator of the queue process $Q$.
Then, by the definition of the matching policy $\Phi$, the only
positive terms $\mathscr A(z,y)$,  for all $y,z\in \mathbb G$, are given by
\begin{equation}
\label{XnGenerator0}
\left\{\begin{array}{ll}
\mathscr{A}(z, z+\gre_{i}) &=\displaystyle\lm_i \ind_{\maO_i}(z),\,i\in \maV;\\\\
\mathscr{A}(z, z-\gre_{i}) &=\displaystyle\ind_{\maN_i}(z) \sum_{j
\in \maE(i)} \left(\lm_j \ind_{\maP_{j}(i)}(z)\right),\,i\in \maV,
\end{array}\right.
\end{equation}
where $\ind_A(\cdot)$ is the indicator function of the set $A$.
Consequently (see, e.g., \cite{PTW07}), for all $i \in \maV$,
we can represent the sample path of $Q_i$ using the independent Poisson arrival processes $N_i,\,i\in \maV$, via
\bequ \label{repForQn}
\bsplit
Q_i(t) & = Q_i(0) + \int_0^t \ind_{\maO_i}(Q(s-)) dN_i(s) \\
& \quad - \sum_{j\in \maE(i)}\int_0^t \ind_{\maN_i}(Q(s-))  \ind_{\maP_{j}(i)}(Q(s-))  dN_j \left(s \right), \quad t \ge 0,
\end{split}
\eeq where $Q(t-)$ denotes the left limit of $Q$ at the time point
$t$.

\begin{ex}
\label{ex:pendant}
For the pendant graph in which node $3$ prioritizes nodes $1$ and $2$ over $4$, as depicted in Figure \ref{Fig:pendant},
the subsets in \eqref{eqAcceptSets} become
\bes
\maO_4 = \left\{z \in \mathbb G : z_3 = 0 \right\} \qandq \maP_{3}(4) = \left\{z \in \mathbb G : z_1 = z_2 = 0\right\}.
\ees
The sample paths of $Q_4$ in that case can be represented via
\bes
Q_4(t) = Q_4(0) + \int_0^t \ind_{\maO_4}(Q(s-)) dN_4(s) - \int_0^t \ind_{\maN_4 \cap \maP_3(4)}(Q(s-)) dN_3(s),\,t\ge 0.
\ees
\end{ex}

\subsection{Marginal process corresponding to a particular node}\label{subsec:marginal}
The fluid limit we are about to introduce is formulated for a
particular class of models, exhibiting the following situation. When
fixing the arrival-rate vector $\lambda$ and the admissible matching
policy $\Phi$, and when isolating a single node for which we take
the corresponding initial buffer content to be strictly positive,
the content process of all the nodes different from that node and
its neighbors, coincides in law with an ergodic CTMC.

Formally, fix a matching queue
$\left(G,\lambda,\Phi\right)_{\ssc}$ and a node $i_0$ of $G$. Let
\[\maS=\maV\setminus(\{i_0\}\cup \maE(i_0)),\]
and index the elements of $\maS$ as follows,
\begin{equation}
\label{eq:defS}
\maS=\Bigl\{i_1,...,i_{|\maS|}\Bigl\}.
\end{equation}
Now, for any $z\in \mathbb G$ such that $z_{i_0}>0$,
the only positive terms $\mathscr A(z,y)$, $y\in \mathbb G$, are given by
\begin{equation}
\label{eq:genpremarginal}
\left\{\begin{array}{ll}
\mathscr{A}(z, z+\gre_{i_0}) &=\lm_{i_0}\,;\\\\
\mathscr{A}(z, z-\gre_{i_0}) &=\displaystyle\sum_{j \in \maE(i_0)} \left(\lm_j \ind_{\maP_{j}(i_0)}(z)\right)\,;\\\\
\mathscr{A}(z, z+\gre_{i_\ell}) & = \lm_{i_\ell} \ind_{\maO_{i_\ell}}(z),~ \ell\in \llbracket 1,|\maS| \rrbracket\,;\\\\
\mathscr{A}(z, z-\gre_{i_\ell}) & = \displaystyle\ind_{\maN_{i_\ell}}(z)
\sum_{\substack{j \in \maE(i_\ell):\\ i_0 \not\in \Phi_{j}\left(i_{\ell}\right)}} \left(\lm_j \ind_{\maP_{j}(i_\ell)}(z)\right), ~\ell\in \llbracket 1,|\maS| \rrbracket.
\end{array}\right.
\end{equation}
To see this, observe that, by the definition of $\Phi$,
$\ind_{\maP_{j}(i_\ell)}(z)=0$ for all $\ell$ and
$j \in \maE(i_\ell)$ such that $i_0 \in \Phi_{j}\left(i_{\ell}\right)$, since $z_{i_0}>0$.

Let $S=\left\{S(t)\,:\,t\ge 0\right\}$ denote the restriction of the process $Q$ to the nodes of $\maS$, i.e.,
\begin{equation}
\label{eq:defprocessS}
S= \left(S_1,S_2,...,S_{|\maS|}\right):=\left(Q_{i_{1}},...,Q_{i_{|\maS|}}\right).
\end{equation}
Then $S$ achieves values in the following subset $\mGS$ of
$\ZZ_{+}^{|\maS|}$,
\begin{equation}
\label{eq:defmGS}
\mGS=\Bigl\{x \in \ZZ_+^{|\maS|}\,:\,x_kx_\ell=0\mbox{ for }k,\ell \in \llbracket 1,|\maS| \rrbracket
\mbox{ such that }i_k \in \mathcal E\left(i_\ell\right)\Bigl\}.
\end{equation}

\noindent Analogously to (\ref{eqAcceptSets}), we define the following subsets of $\mGS$: For any $\ell \in \llbracket 0,|\maS| \rrbracket$,
\begin{align}
\maN^{\maS}_{i_\ell} & := \left\{x\in \mGS :\,x_\ell>0\right\}; \nonumber\\ 
\maO^{\maS}_{i_\ell} & := \Bigl\{x \in \mGS : x_j = 0 \qforallq j \in \llbracket 1,|\maS| \rrbracket \mbox{ such that }i_j\in \maE(i_\ell)\Bigl\}; \nonumber
\end{align}
and for any $\ell \in \llbracket 0,|\maS| \rrbracket$ and $j\in\maE(i_\ell)$, 
\bequ \label{eq:defmaPS}
\maP^{\maS}_{j}(i_\ell) := \Bigl\{x \in \mGS : x_k = 0
            \mbox{ for all } k \mbox{ such that }i_k\in \Phi_{j}(i_\ell)\Bigl\}. \,
\eeq

\begin{definition}
\label{def:marginal}
The {\em marginal process} corresponding to node $i_0$ is the $\mGS$-valued
CTMC $\chi := \{\chi(t) : t \ge 0\}$, whose infinitesimal generator $\mathscr{A}^{\maS}$ has the following positive terms,
\bequ \label{XnGenerator}
\left\{\begin{array}{ll}
\mathscr{A}^{\maS}(x, x+\gre_\ell) & = \lm_{i_\ell} \ind_{\maO^{\maS}_{i_\ell}}(x),~ \ell\in \llbracket 1,|\maS| \rrbracket\,;\\
&\\
\mathscr{A}^{\maS}(x, x-\gre_\ell) & =
\displaystyle\ind_{\maN^{\maS}_{i_\ell}}(x)
\sum_{\substack{j \in \maE(i_\ell):\\ i_0 \not\in \Phi_{j}\left(i_{\ell}\right)}} \left(\lm_j \ind_{\maP^{\maS}_{j}(i_\ell)}(x)\right), ~\ell\in \llbracket 1,|\maS| \rrbracket. \\
\end{array}\right.
\eeq
\end{definition}
Observe that for any $z \in \mathbb G$,  if $x \in \mGS$ is defined by $x=\left(z_{i_1},...,z_{i_{|\maS|}}\right)$, then for all $\ell \in \llbracket 1,|\maS| \rrbracket$ we have
\begin{align*}
\ind_{\maO_{i_\ell}}(z)&=\ind_{\maO^{\maS}_{i_\ell}}(x)\,;\\
\ind_{\maN_{i_\ell}}(z)&=\ind_{\maN^{\maS}_{i_\ell}}(x),
\end{align*}
and if $z_{i_0}>0$, as we have $z_j=0$ for all $j \in \maE(i_0)$, by definition of $\Phi$ we obtain
\[\ind_{\maP_{j}(i_\ell)}(z) = \ind_{\maP^{\maS}_{j}(i_\ell)}(x),\,\mbox{ for all }j \in \maE(i_\ell).\]
Therefore, in view of (\ref{eq:genpremarginal}) and
(\ref{XnGenerator}) we can provide a more intuitive definition of
the marginal process associated to the node $i_0$: it is a Markov
process on $\mGS$ which coincides in distribution with the
restriction $S$ of the process $Q$ to its coordinates in $\maS$,
conditionally on the $i_0$-th coordinate of $Q$ being positive.

\begin{ex}[Example \ref{ex:pendant}, continued]
\label{ex:2} Set $i_0=4$.
 Then we have $\maS=\{1,2\}$. Set $i_1=1$
and $i_2=2$. We thus have
\begin{equation}
\label{eq:defE2}
\mGS=\left(\{0\}\times \ZZ_+\right) \cup \left(\ZZ_+\times \{0\}\right)=:\mathbb E_2,
\end{equation}
and the following subsets of $\mathbb E_2$,
\begin{align*}
\maO^{\maS}_1 &= \ZZ_+\times \{0\}; \quad\quad \maP^{\maS}_{3}(1) = \maP^{\maS}_{2}(1) = \maN^{\maS}_1 = \ZZ_{++}\times \{0\};\\
\maO^{\maS}_2 &= \{0\}\times \ZZ_+; \quad\quad \maP^{\maS}_{3}(2) = \maP^{\maS}_{1}(2) = \maN^{\maS}_2 = \{0\}\times \ZZ_{++}.
\end{align*}
Thus the positive terms of the generator $\mathscr{A}^{\maS}$ are
\begin{equation}
\label{eq:transPendant0}
\left\{\begin{array}{ll}
\mathscr{A}^{\maS}(x,x+\gre_1)=\lambda_1 &\mbox{ for }x\in \ZZ_+\times \{0\};\\
\mathscr{A}^{\maS}(x,x-\gre_1)=\lambda_3+\lambda_2 &\mbox{ for }x\in \ZZ_{++}\times \{0\};\\
\mathscr{A}^{\maS}(x,x+\gre_2)=\lambda_2 &\mbox{ for }x \in \{0\}\times \ZZ_+;\\
\mathscr{A}^{\maS}(x,x-\gre_2)=\lambda_3+\lambda_1 &\mbox{ for }x\in \{0\}\times \ZZ_{++}.
\end{array}\right.
\end{equation}
\end{ex}

\subsection{The FWLLN} \label{secFWLLN}

Throughout this section, fix the matching queue
$\left(G,\lambda,\Phi\right)_{\ssc}$ and the node $i_0 \in \maV$.
We consider the sequence of fluid-scaled processes $\{\bar Q^n : n
\ge 1\}$, defined via
\bequ \label{Scaling}
\barq^n(t)=\frac{Q^n(t)}{n}:=\frac{Q(nt)}{n},\,t\ge 0, \quad n \ge 1.
\eeq
Similarly, recalling (\ref{eq:defprocessS}) we define
\begin{equation}
\label{eq:scalingS} \bar S^n(t)=\frac{S^n(t)}{n}:=\frac{S(nt)}{n},\,t\ge 0, \quad n \ge 1.
\end{equation}
For $n\in \Z_{++}$, we will use the notation $N^n_i(\cdot)$ for
the time-scaled Poisson arrival process to node $i$, namely,
$N^n_i(\cdot) = N_i(n \cdot)$, $i \in \maV$.
We also denote by $\chi^n$ the $n$-th marginal process corresponding to $i_0$, defined by
\begin{equation} \label{scalingChi}
\chi^n(t) = \chi(nt),\,t\ge 0,
\end{equation}
and define \begin{equation}\label{eq:scalingChi} \bar
\chi^n(t)=\frac{\chi^n(t)}{n},\,t\ge 0, \quad n \ge 1.
\end{equation}


The insufficiency of {\sc Ncond} to ensure the stability of a given matching queue will be shown via the following lemma.
\begin{lemma} \label{lmUnstable}
If there exists an initial condition $Q^n(0) \in \mathbb G$ such that $\barq^n \Ra \barq$ in $\D^{|\maV|}$ as $n\tinf$ and
$\maP : = \{i \in \maV : \barq_i(0) > 0\} \ne \emptyset$,
and if for some $i \in \maP$ it holds that $\barq_i$ is nondecreasing, then $Q$ is either transient or null recurrent.
In particular, the corresponding matching queue $(G,\lambda,\Phi)_{\ssc}$ is unstable.
\end{lemma}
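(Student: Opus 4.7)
The plan is to argue by contradiction, in the spirit of the fluid-limit instability approach of \cite[Ch.~9]{RobertBook}. I will suppose $Q$ is positive recurrent, so that (since all intensities $\lm_j > 0$ render $Q$ irreducible on the communicating class of $\mazero$) its unique stationary distribution $\pi$ satisfies $\pi(\mazero) > 0$. I then fix $i \in \maP$ for which $\bar Q_i$ is nondecreasing, so that $\bar Q_i(t) \ge \bar Q_i(0) > 0$ for all $t \ge 0$.

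The heart of the argument is to compare two estimates on the hitting time $\tau_0^n := \inf\{t \ge 0 : Q(t) = \mazero\}$ under the initial condition $Q^n(0)$. For the \emph{lower} bound, by Skorokhod's representation I may assume $\bar Q^n \to \bar Q$ uniformly on compact subsets of $[0,\infty)$, almost surely. Monotonicity gives $\inf_{t \in [0, T]} \bar Q_i(t) = \bar Q_i(0) > 0$, so on the event $\{\inf_{t \in [0, T]} \bar Q^n_i(t) > \bar Q_i(0)/2\}$---whose probability tends to $1$---one has $Q_i(nt) > n\bar Q_i(0)/2 > 0$ throughout $[0, T]$, whence $\tau_0^n > nT$. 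This yields $\tau_0^n/n \Ra \infty$, and, by Fatou's lemma, $\mathbb{E}[\tau_0^n]/n \to \infty$.

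For the \emph{upper} bound, I will invoke the Foster--Lyapunov converse: since $Q$ has bounded total jump rate $\bar\lm$ and each jump moves $|Q|$ by $\pm 1$, positive recurrence of $Q$ provides (via the Foster--Lyapunov machinery in \cite[Ch.~9]{RobertBook}) a test function $V$, a finite set $C \ni \mazero$, and $\epsilon > 0$ such that $\mathscr A V(z) \le -\epsilon$ outside $C$, with $V$ of at most linear growth in $|x|$ on the range of the scaled initial conditions $Q^n(0)$. This yields $\mathbb{E}_x[\tau_C] \le V(x)/\epsilon = O(|x|)$; combined with $\sup_{y \in C}\mathbb{E}_y[\tau_0] < \infty$ and $|Q^n(0)|/n \to |\bar Q(0)| < \infty$, one obtains $\mathbb{E}[\tau_0^n] = O(n)$, contradicting the lower bound.

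The main obstacle will be the upper-bound step. Foster's theorem only guarantees existence of \emph{some} Lyapunov function with negative drift, and such a $V$ need not in general grow linearly in $|x|$. For our matching queue, however, the specific structure of $\mathscr A$ displayed in~\eqref{XnGenerator0}---together with the bounded jump rates---should allow us to follow Robert's framework and produce a $V$ with the desired linear control. Once this delicate verification is carried out, the contradiction is immediate, so $Q$ must be transient or null recurrent and the matching queue $(G,\lm,\Phi)_{\ssc}$ is unstable.
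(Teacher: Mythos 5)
Your overall plan — derive a contradiction from positive recurrence using the nondecreasing fluid coordinate $\bar Q_i$ — is the same as the paper's, and your ``lower-bound'' observation (that $\bar Q_i$ nondecreasing with $\bar Q_i(0)>0$ forces $\rho^n/n \Ra \infty$) is exactly the relevant point. However, the ``upper bound'' step has a genuine gap, not a merely delicate verification. Positive recurrence of a CTMC with bounded total jump rate and unit jump sizes does \emph{not} by itself furnish a Foster--Lyapunov function of linear growth, nor does it imply $\mathbb{E}_x[\tau_0] = O(|x|)$: one can build positive recurrent birth--death chains with rates bounded above in which $\mathbb{E}_n[\tau_0]$ grows super-linearly (because the downward drift vanishes as $n\to\infty$). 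To rule that out here you would have to exploit the specific scaling structure of the generator~\eqref{XnGenerator0} (rates depend on the state only through which coordinates are zero, so they do not degenerate at infinity), and that argument is precisely the content of the result you are reaching for rather than something you get for free from Foster's theorem.

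The paper sidesteps all of this by citing Proposition~9.9 in \cite{RobertBook}: if the underlying Markov process is positive recurrent, then every fluid limit must vanish identically after some a.s.\ finite time $\maT$. Combined with the monotonicity observation — $\bar Q_i(t) \ge \bar Q_i(0) > 0$ for all $t$ — this is an immediate contradiction, with no Lyapunov function, no moment bound on $\tau_0^n$, and no Fatou argument needed. You should replace your upper-bound construction with a direct appeal to that proposition; once you do, the lower-bound/Fatou machinery also collapses to the one-line observation the paper actually uses.
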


\begin{proof}
By Proposition 9.9 in \cite{RobertBook}, if $Q$ is positive recurrent there exists a (possibly random) time $\maT$,
such that $\maT < \infty$ w.p.1 and $\barq(t) = 0$ for all $t \ge \maT$.
\end{proof}


For the fluid analysis, we make two assumptions. 
\begin{assumption} \label{AssInitial}
$Q^n(0) \in \mathbb G$ for any $n\ge 1$, and $\barq^n(0) \Ra \barq(0)$ as $n\tinf$, where $\barq(0)$ is a deterministic element of $\RR^{|\maV|}$,
with $\barq_{i_0}(0) > 0$ and $\barq_i(0) = 0$, $i \in \maV \setminus \{i_0\}$.
\end{assumption}

\begin{assumption} \label{AssErgo}
For all $n\ge 1$, the $\mGS$-valued process $\chi^n$ is ergodic with
stationary probability $\pi^n$.
\end{assumption}

For $n \ge 1$, let
\bequ \label{rho^n}
\rho^n := \rho^n(Q^n(0)) := \inf\{t \ge 0 : Q^n_{i_0}(t) = 0\},
\eeq
with $\inf \emptyset := \infty$.
\begin{lemma} \label{lmBndFluid}
Consider the sequence $\{\barq^n : n \in \ZZ_{++}\}$ corresponding to a system $(G,\lambda,\Phi)_{\ssc}$.
Then there exist $n_0 \in \ZZ_+$ and $\delta > 0$ such that $\rho^n(Q^n(0)) > \delta$ w.p.1 for all $n \ge n_0$.
In particular, there exists $n_0 < \infty$, such that
\bequ \label{QcPos}
\inf_{0 \le t < \delta}\barq^n_{i_0}(t) > 0 ~ w.p.1 \qforallq n \ge n_0.
\eeq
\end{lemma}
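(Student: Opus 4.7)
The plan is to use a simple pathwise lower bound on the unscaled process $Q^n_{i_0}$, and then pass to the fluid scale via the functional SLLN for Poisson processes. The heuristic is transparent: since $\bar Q^n_{i_0}(0)$ is asymptotically of order $n$ (by Assumption \ref{AssInitial}) while $Q_{i_0}$ decreases by at most one unit per arrival at a neighbor of $i_0$, the buffer of $i_0$ cannot drain in a fluid-level time strictly smaller than $\bar Q_{i_0}(0)/\bar\lambda_{\maE(i_0)}$.

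To make this precise, I would start from the sample-path representation \eqref{repForQn} applied to $i=i_0$ on the time horizon $[0,nt]$ and observe that the negative term is dominated by the superposition of the Poisson arrival streams at the neighbors of $i_0$, because each indicator is bounded by $1$. This yields the pathwise inequality
\begin{equation*}
Q^n_{i_0}(t)\ge Q^n_{i_0}(0)-\sum_{j\in\maE(i_0)} N^n_j(t),\qquad t\ge 0,
\end{equation*}
where $N^n_j(\cdot):=N_j(n\cdot)$ is the time-scaled arrival process at neighbor $j$. Dividing by $n$, we obtain
\begin{equation*}
\bar Q^n_{i_0}(t)\ge \bar Q^n_{i_0}(0)-\sum_{j\in\maE(i_0)}\bar N^n_j(t),\qquad t\ge 0.
\end{equation*}

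The next step is to take the limit. By the FSLLN for the Poisson processes, $\bar N^n_j\to \lambda_j e$ uniformly on compact sets, where $e(t)=t$; using the Skorohod representation theorem, we may work on a common probability space on which this convergence, together with the convergence $\bar Q^n_{i_0}(0)\to \bar Q_{i_0}(0)$ from Assumption \ref{AssInitial}, holds w.p.1. Passing to the $\liminf$ in the displayed inequality and using the monotonicity of $\bar N^n_j(\cdot)$ to handle the supremum over $t\in[0,\delta]$ yields
\begin{equation*}
\liminf_{n\to\infty}\inf_{0\le t\le \delta}\bar Q^n_{i_0}(t)\ge \bar Q_{i_0}(0)-\bar\lambda_{\maE(i_0)}\,\delta\qquad\text{w.p.1.}
\end{equation*}
Choosing any $\delta\in\bigl(0,\bar Q_{i_0}(0)/\bar\lambda_{\maE(i_0)}\bigr)$ makes the right-hand side strictly positive. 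Therefore w.p.1 there exists $n_0$ such that $\bar Q^n_{i_0}(t)>0$ for every $t\in[0,\delta]$ and every $n\ge n_0$, which is precisely the bound \eqref{QcPos}. Since $Q^n_{i_0}(t)=n\bar Q^n_{i_0}(t)$, this also gives $\rho^n(Q^n(0))>\delta$, proving the first claim.

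The only subtle point, and hence the main obstacle, is the upgrade from convergence in distribution (given by Assumption \ref{AssInitial}) to an almost-sure statement yielding a single $n_0$ that works w.p.1 for all $n\ge n_0$. This is handled by invoking Skorohod's representation so that all convergences hold jointly a.s.\ on a common space, and by taking the initial Poisson processes common across $n$ via the time change $N^n_j(t)=N_j(nt)$, so that the FSLLN for $\{\bar N^n_j\}$ is an a.s.\ uniform-on-compacts statement. Everything else reduces to bookkeeping with the pathwise bound above.
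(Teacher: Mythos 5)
Your proposal is correct and takes essentially the same approach as the paper: a pathwise lower bound on $Q^n_{i_0}$ by a free random walk (in your case $Q^n_{i_0}(0)-\sum_{j\in\maE(i_0)}N^n_j(t)$; the paper keeps the $+N_{i_0}(nt)$ term via a coupling with the policy that prioritizes $i_0$, which requires a short induction to justify but yields a larger $\delta$), followed by the FSLLN for the Poisson processes. Dropping the arrival term, as you do, is a harmless simplification that avoids the induction; you are also more explicit than the paper about upgrading the distributional convergence of $\bar Q^n(0)$ (to a deterministic limit) to an a.s.\ statement via Skorohod representation, a point the paper's proof glosses over.
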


\begin{proof}
We use a simple coupling argument. Consider the matching queue
$(G,\lambda,\wt\Phi)_{\ssc}$ (with the same graph $G$ and arrival-rate vector $\lm$ as in the statement of the lemma),
where $\wt \Phi$ is the priority policy under which each $i \in \maE\left(i_0\right)$ gives the highest priority to node $i_0$.
If the corresponding queue process $\wt Q^n$ is given the same Poisson processes $\{N^n_i : i \in \maV\}$ of the original system,
we clearly have
\bes
Q^n_{i_0}(t) \ge \wt{Q}^n_{i_0}(t) := Q^n_{i_0}(0) + N_{i_0}(n t) - \sum_{j \in \maE\left(i_0\right)} N_j(n t), \quad 0 \le t \le \delta^n,
\ees
where $\delta^n := \inf\{t > 0 : \wt{Q}^n_{i_0}(t) = 0\}$.

Dividing $\wt{Q}^n_{i_0}$ by $n$ and taking $n \tinf$, we obtain from the functional strong law of large numbers (FSLLN) for the Poisson process, that
$\wt{Q}^n/n$ converges w.p.1 to $\wt{q}$, where
\bequ
\wt{q}_{i_0}(t) := \barq_{i_0}(0) + \left(\lm_{i_0} - \sum_{j \in \maE(i_0)} \lm_j\right) t, \quad 0 \le t < \delta,
\eeq
where
\bes
\delta := \frac{\barq_{i_0}(0)}{\sum_{j \in \maE(i_0)} \lm_j - \lm_{i_0}} \qifq \sum_{j \in \maE(i_0)} \lm_j - \lm_{i_0} > 0 \qandq \delta := \infty \mbox{ otherwise}.
\ees
The uniform convergence over compact subintervals of $[0, \delta)$ of the lower bounding processes $\wt{Q}^n_{i_0}/n$ to a strictly positive function gives \eqref{QcPos}.
\end{proof}

Before presenting the fluid limit, we explain the intuition behind the expression for $\barq_{i_0}$ that we obtain.
Since $\barq_{i_0}$ is strictly positive over an interval $[0, \delta)$ by Lemma \ref{lmBndFluid}, if $S^n(0) \deq \chi^n(0)$ for all $n \ge 1$,
then from (\ref{eq:genpremarginal}), \eqref{XnGenerator} describes the infinitesimal rates of $S^n$ over $[0, \delta)$, so that
\bequ \label{XiEqDistSn}
\{S^n(t) : 0 \le t < \delta\} \deq \{\chi^n(t) : 0 \le t < \delta\}.
\eeq
Hence, $S^n$ is locally (over $[0, \delta)$) distributed as a CTMC, which is ergodic by Assumption \ref{AssErgo}.
Thus it is not hard to show that $\bar S^n$ converges to $0$ over that interval; see the proof of Theorem \ref{thFWLLN}.
Nevertheless, the dynamics of $\bar S^n$ determine those of $\barq^n_{i_0}$ for each $n$, as is clear from \eqref{repForQn}, and the affect
of $\bar S^n$ on $\barq^n_{i_0}$ does not diminish as $n$ increases to infinity.
However, the ``small'' process $S^n$ is also ``fast'' relative to $\barq^n_{i_0}$, since
\eqref{scalingChi} implies that,
regardless of the distribution of $\chi^n(0)$, $\chi^n(t) \on{d}{\approx} \chi(\infty)$, for any $t > 0$ and for all large-enough $n$,
where $\chi(\infty)$
denotes a random variable having the stationary distribution of $\chi$.
(We write $\on{d}{\approx}$ if, in the limit as $n\tinf$, the distribution of $\chi^n(t)$
at time $t$ converges to the stationary distribution of $\chi$, i.e., $\chi^n(t) \Ra \chi(\infty)$ in $\mGS$.)
Then \eqref{XiEqDistSn} implies that $S^n(t)$ converges to $\chi(\infty)$ as well as $n\tinf$, $0 < t < \delta$.
Such a result is known in the queueing literature as a {\em pointwise stationarity}, e.g., \cite{Whitt-PSA}.
Of course, to obtain a FWLLN, the convergence must hold uniformly in $t$ over the interval $[0, \delta)$,
namely, the aforementioned stochastic AP must hold, but the intuition for the fast averaging phenomenon is similar.

Formally, let $\pi$ denote the stationary distribution of the CTMC $\chi$ whose generator
$\mathscr{A}^{\maS}$ is given in \eqref{XnGenerator}, i.e.,
\bequ \label{piDist}
\pi(\mathcal{Z}) = P(\chi(\infty) \in \mathcal{Z}), \quad \mathcal{Z} \subseteq \mGS.
\eeq

\begin{theorem}{$($FWLLN$)$} \label{thFWLLN}
Let $(G,\lambda,\Phi)_{\ssc}$ be a matching queue such that $\Phi$
is of the priority type. If, for some node $i_0$
\bequ
\label{CondDecrease} \lm_{i_0} - \sum_{j \in \maE(i_0)} \lm_j
\pi\left(\maP^{\maS}_j(i_0)\right) < 0, \eeq for $\pi$ in
\eqref{piDist} and $\maP^{\maS}_j(i_0)$, $j \in \maE(i_0)$ in
(\ref{eq:defmaPS}), then $\rho^n \Ra \rho$ in $\RR$ as $n\tinf$,
for $\rho^n$ in \eqref{rho^n}, where
\bequ \label{rho} \rho :=
\frac{\barq_{i_0}(0)}{\sum_{j \in \maE(i_0)}\lm_j
\pi\left(\maP^{\maS}_j(i_0)\right) - \lm_{i_0}}.
\eeq
Otherwise, $\rho^n \Ra \infty$. In either case, $\barq^n \Ra \barq$
in $\D^{|\maV|}[0, \rho)$ as $n \tinf$, where \bequ \label{FluidLim}
\bsplit
\barq_{i_0}(t) & = \barq_{i_0}(0) + \left(\lm_{i_0} - \sum_{j \in \maE(i_0)} \lm_j \pi\left(\maP^{\maS}_j(i_0)\right) \right) t, \\
\barq_i(t) & = 0, \quad i \in \maV \setminus \{i_0\}.
\end{split}
\eeq
\end{theorem}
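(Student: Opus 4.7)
The plan is to combine a short-interval fluid analysis with a stochastic averaging argument driven by the ergodicity of the marginal process $\chi$, and then bootstrap the result to the full interval $[0,\rho)$. By Lemma~\ref{lmBndFluid}, for all $n$ large enough, $\rho^n>\delta$ w.p.1 for some deterministic $\delta>0$, so $Q^n_{i_0}(\cdot)>0$ on $[0,\delta)$. On this event the matching constraint forces $Q^n_j\equiv 0$ for every $j\in\maE(i_0)$, and comparing \eqref{eq:genpremarginal} with \eqref{XnGenerator} shows that $S^n$ evolves on $[0,\delta)$ according to the transition rates of the marginal CTMC $\chi^n$, so \eqref{XiEqDistSn} applies. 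Since $\chi$ is ergodic with stationary law $\pi$, and hence tight, $\bar S^n\Rightarrow 0$ in $\D^{|\maS|}[0,\delta)$; together with the forced zeros on $\maE(i_0)$ this gives $\bar Q^n_i\Rightarrow 0$ on $[0,\delta)$ for every $i\neq i_0$.

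Next I turn to the drift of $\bar Q^n_{i_0}$ using the sample-path representation \eqref{repForQn}. On $[0,\delta)$ one has $\ind_{\maO_{i_0}}(Q(\cdot))=\ind_{\maN_{i_0}}(Q(\cdot))\equiv 1$ and $\ind_{\maP_j(i_0)}(Q(\cdot))=\ind_{\maP^{\maS}_j(i_0)}(S^n(\cdot))$, so the compensator/martingale decomposition yields
\begin{equation*}
\int_0^{nt}\ind_{\maP^{\maS}_j(i_0)}(S^n(s-))\,dN_j(s)\ =\ \lambda_j\int_0^{nt}\ind_{\maP^{\maS}_j(i_0)}(S^n(s))\,ds\ +\ M^n_j(t),
\end{equation*}
with $M^n_j$ a square-integrable martingale of predictable quadratic variation at most $\lambda_j n t$. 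Doob's $L^2$ inequality then yields $\sup_{0\le t\le T}|M^n_j(t)|/n\Rightarrow 0$. Coupled with \eqref{XiEqDistSn}, the pointwise ergodic theorem applied to $\chi$ gives, almost surely,
\begin{equation*}
\tfrac{1}{n}\int_0^{nt}\ind_{\maP^{\maS}_j(i_0)}(\chi(s))\,ds\ \longrightarrow\ t\,\pi\!\left(\maP^{\maS}_j(i_0)\right),
\end{equation*}
and since both sides are nondecreasing in $t$ with continuous limit, the convergence is uniform on compacts by Dini's lemma. Combined with the FSLLN $N_{i_0}(n\cdot)/n\to\lambda_{i_0}\cdot$, this produces the linear limit \eqref{FluidLim} on $[0,\delta)$ with drift $D:=\lambda_{i_0}-\sum_{j\in\maE(i_0)}\lambda_j\pi(\maP^{\maS}_j(i_0))$.

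To extend the limit to $[0,\rho)$, I iterate by a stopping-time bootstrap: whenever $\bar q_{i_0}(t^*)>0$, the coupling lower bound of Lemma~\ref{lmBndFluid} applied to the process shifted by $t^*$ via the strong Markov property keeps $\bar Q^n_{i_0}$ uniformly bounded away from zero on a further interval past $t^*$, and the previous step extends the linear formula there. The maximal such interval is exactly $[0,\rho)$ when \eqref{CondDecrease} holds, and $[0,\infty)$ otherwise; continuity of the hitting time for a linear passage then yields $\rho^n\Rightarrow\rho$. The principal obstacle is the stochastic averaging principle itself --- justifying, uniformly over a time horizon of order $n$, that the fast coordinates $S^n$ equilibrate to $\pi$ quickly enough that their indicators can be replaced by their stationary means at the fluid scale. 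Once this AP is in place together with the vanishing of the martingale noise, the FSLLN for the arrivals and the bootstrap on the hitting time make up a comparatively routine closure.
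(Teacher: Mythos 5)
Your proof is correct and follows the paper's overall strategy closely: use Lemma~\ref{lmBndFluid} to get a short deterministic interval $[0,\delta)$ on which $Q^n_{i_0}>0$; write the sample-path/martingale decomposition for $\bar Q^n_{i_0}$; show that the compensated martingale noise vanishes, the fast coordinates $\bar S^n$ vanish, and the occupation-time integrals of $\ind_{\maP_j^{\maS}(i_0)}$ average out to $\pi\bigl(\maP_j^{\maS}(i_0)\bigr)$; extend iteratively past $\delta$ while $\barq_{i_0}>0$; and conclude the convergence $\rho^n\Ra\rho$ from continuity of the first-passage mapping. The one place where you diverge is in the justification of the averaging step. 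You invoke the pointwise ergodic theorem for the fixed ergodic CTMC $\chi$ to obtain the almost-sure limit $\tfrac{1}{n}\int_0^{nt}\ind_{A}(\chi(s))\,ds\to t\,\pi(A)$, upgrade pointwise-in-$t$ convergence to uniform convergence on compacts via the monotone (Pólya/Dini) lemma, and then transfer to $S^n$ through the distributional identity \eqref{XiEqDistSn} (since convergence in distribution to a deterministic limit is convergence in probability). The paper instead encodes these occupation-time integrals as random measures $\nu^n$ on $[0,\delta)\times\mGS$, establishes their tightness and the form of any limit point via Lemmas 1.3 and 1.4 of \cite{KurtzAP}, and separately proves $\mathbb C$-tightness of $\barq^n$ via a modulus-of-continuity bound. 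Your implementation is arguably more elementary and transparent for this particular structure: because the fast chain $\chi$ is a single ergodic CTMC whose time-average is automatically nondecreasing in $t$, you can bypass the abstract occupation-measure machinery entirely. The Kurtz framework the paper uses is the standard, reusable toolkit for stochastic averaging and generalizes more readily to settings where the fast chain's stationary law depends on the slow state (which it does not here since $\barq_i\equiv 0$ for $i\ne i_0$ on the interval of interest).
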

The proof of Theorem \ref{thFWLLN} is given in Section \ref{secFluidProof}. From Theorem \ref{thFWLLN} and Lemma \ref{lmUnstable} it immediately follows that
\begin{corollary} \label{corNS}
If $\rho^n \Ra \infty$, for $\rho^n$ in \eqref{rho^n}, then $(G,\lambda,\Phi)_{\ssc}$ is unstable.
\end{corollary}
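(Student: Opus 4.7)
The plan is to deduce the corollary directly from Theorem \ref{thFWLLN} and Lemma \ref{lmUnstable}, essentially by observing that the two hypotheses of Lemma \ref{lmUnstable} are met whenever $\rho^n \Ra \infty$.

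First I would unpack what $\rho^n \Ra \infty$ tells us via Theorem \ref{thFWLLN}. The theorem gives a dichotomy for $\rho^n$: either \eqref{CondDecrease} holds, in which case $\rho^n \Ra \rho < \infty$ with $\rho$ given by \eqref{rho}, or \eqref{CondDecrease} fails and $\rho^n \Ra \infty$. Thus the assumption $\rho^n \Ra \infty$ forces
\begin{equation*}
c := \lambda_{i_0} - \sum_{j \in \maE(i_0)} \lambda_j \,\pi\!\left(\maP^{\maS}_j(i_0)\right) \;\geq\; 0.
\end{equation*}
The FWLLN part of Theorem \ref{thFWLLN} then provides the limit $\bar q^n \Ra \bar q$ in $\D^{|\maV|}[0,\rho) = \D^{|\maV|}[0,\infty)$, where by \eqref{FluidLim}
\begin{equation*}
\bar q_{i_0}(t) = \bar q_{i_0}(0) + c t, \qquad \bar q_i(t) = 0 \text{ for } i \neq i_0.
\end{equation*}

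Next I would check the hypotheses of Lemma \ref{lmUnstable} for the sequence of initial conditions guaranteed by Assumption \ref{AssInitial}. The convergence $\bar q^n \Ra \bar q$ holds by the above. By Assumption \ref{AssInitial}, $\bar q_{i_0}(0) > 0$, so the set $\maP = \{i \in \maV : \bar q_i(0) > 0\}$ contains $i_0$ and is non-empty. Finally, since $c \geq 0$, the function $t \mapsto \bar q_{i_0}(t)$ is nondecreasing on $[0,\infty)$, and $i_0 \in \maP$, so the third hypothesis of Lemma \ref{lmUnstable} is met with the choice $i = i_0$.

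Applying Lemma \ref{lmUnstable} then yields that $Q$ is either transient or null recurrent under this initial condition, so $(G,\lambda,\Phi)_{\ssc}$ is unstable, which is the desired conclusion. The argument is essentially a one-line deduction; there is no real obstacle, as the work is entirely absorbed into Theorem \ref{thFWLLN} and Lemma \ref{lmUnstable}. The only point worth being careful about is to note that the fluid trajectory is explicitly \emph{affine} (not merely monotone in a weak sense), so the case $c = 0$ still qualifies as ``nondecreasing'' in the sense required by Lemma \ref{lmUnstable}, and the non-emptiness condition on $\maP$ is inherited from Assumption \ref{AssInitial} rather than needing separate justification.
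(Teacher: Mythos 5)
Your proposal is correct and follows exactly the route the paper takes: the paper states that the corollary "immediately follows" from Theorem \ref{thFWLLN} and Lemma \ref{lmUnstable}, and you have simply filled in the short chain of deductions (the dichotomy in Theorem \ref{thFWLLN} forces $c\ge 0$, the fluid trajectory is then nondecreasing, and Assumption \ref{AssInitial} guarantees $\maP\ne\emptyset$). The only nuance you flag, that $c=0$ still qualifies as nondecreasing, is indeed the relevant care point and is handled correctly.
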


It is significant that we can compute the stationary probabilities $\pi(\cdot)$ in \eqref{piDist} in some cases, using reversibility arguments.

\section{The Pendant Graph and the 5-Cycle} \label{sec:Examples}

In this section we analyze matching queues defined on the pendant graph and the 5-cycle,
using the fluid limit in Theorem \ref{thFWLLN}.
In both cases, 
the stationary probability $\pi$ (on $\mathbb E_2$ in \eqref{eq:defE2}) can be computed explicitly,
so that the stability region of the corresponding matching queues can be fully characterized.

\subsection{The Pendant graph} \label{subsec:pendant}
We start with the model depicted in Figure \ref{Fig:pendant}.
\begin{proposition}
\label{pro:Pendant}
Let $G$ be the pendant graph and $\Phi$ the matching policy depicted in Figure \ref{Fig:pendant}.
Consider an arrival-rate vector $\lambda \in $ {\sc Ncond}$_{\ssc}(G)$, i.e.,
\begin{equation}
\label{eq:NcondPendant}
\lambda_4<\lambda_3<\lambda_4+\lambda_1+\lambda_2, \quad \lambda_4+\lambda_1<\lambda_3+\lambda_2 \qandq  \lambda_4+\lambda_2<\lambda_3+\lambda_1.
\end{equation}
If $\barq^n(0) \Rightarrow x \boe_4$ in $\RR^4$ for some $x \in \RR_{++}$, then $\barq^n \Ra \barq$ in $\D^4[0, \rho)$ as $n\tinf$,
where
\bequ \label{FluidPendant}
\barq(t) = \left(0,0,0,x+\left(\lambda_4 - \lm_3\alpha\right)t\right), \quad 0 \le t < \rho,
\eeq
for $\rho := x/(\lm_3\af-\lm_4)$ if $\af > \lm_4/\lm_3$ and $\rho := \infty$ otherwise, and for
\begin{equation}
\label{eq:defalphaPendant}
\alpha := \left[1+{\lambda_1 \over \lambda_3+\lambda_2-\lambda_1}+{\lambda_2 \over \lambda_3+\lambda_1-\lambda_2}\right]^{-1}
={\left(\lambda_3\right)^2-(\lambda_1-\lambda_2)^2 \over \lm_3(\lambda_3+\lambda_1+\lambda_2)}.
\end{equation}
\end{proposition}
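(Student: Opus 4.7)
The plan is to apply the FWLLN (Theorem \ref{thFWLLN}) with the distinguished node taken to be $i_0 = 4$. Since $\maE(4) = \{3\}$, we have $\maS = \{1,2\}$ and the marginal process $\chi$ takes values in $\mathbb{E}_2 = (\{0\}\times \ZZ_+)\cup(\ZZ_+\times\{0\})$, with positive transition rates already exhibited in Example \ref{ex:2}. Because $3$ gives priority to $1$ and $2$ over $4$, we have $\Phi_3(4)=\{1,2\}$, so $\maP_3^{\maS}(4) = \{(0,0)\}$. Thus the theorem's formula (\ref{FluidLim}) reduces to $\bar q_4(t) = x + (\lambda_4 - \lambda_3 \pi(\{(0,0)\}))t$, and the entire task is to verify Assumption \ref{AssErgo} and identify $\pi(\{(0,0)\})$ with $\alpha$.

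For ergodicity, observe that $\chi$ is the gluing at the origin of two independent birth-death chains: one on $\ZZ_+\times\{0\}$ with birth rate $\lambda_1$ and death rate $\lambda_3+\lambda_2$, the other on $\{0\}\times\ZZ_+$ with birth rate $\lambda_2$ and death rate $\lambda_3+\lambda_1$. The inequalities $\lambda_1 < \lambda_3 + \lambda_2$ and $\lambda_2 < \lambda_3+\lambda_1$ follow directly from the {\sc Ncond} inequalities $\lambda_4+\lambda_1<\lambda_3+\lambda_2$ and $\lambda_4+\lambda_2<\lambda_3+\lambda_1$ together with $\lambda_4>0$. Each arm is therefore positive recurrent, and irreducibility on $\mathbb{E}_2$ is immediate from the fact that $(0,0)$ communicates with every state.

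Because each arm is a birth-death chain, $\chi$ is reversible. Writing $\pi_0 := \pi(\{(0,0)\})$, $\pi_k^{(1)} := \pi(\{(k,0)\})$ and $\pi_k^{(2)} := \pi(\{(0,k)\})$, the detailed-balance equations
\begin{equation*}
\lambda_1 \pi_k^{(1)} = (\lambda_3+\lambda_2)\pi_{k+1}^{(1)}, \qquad \lambda_2 \pi_k^{(2)} = (\lambda_3+\lambda_1)\pi_{k+1}^{(2)}
\end{equation*}
give the geometric tails $\pi_k^{(1)} = \pi_0 r_1^k$ and $\pi_k^{(2)} = \pi_0 r_2^k$ with ratios $r_1 := \lambda_1/(\lambda_3+\lambda_2) < 1$ and $r_2 := \lambda_2/(\lambda_3+\lambda_1) < 1$. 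Normalization
\begin{equation*}
\pi_0\left[1 + \frac{r_1}{1-r_1} + \frac{r_2}{1-r_2}\right] = 1
\end{equation*}
yields $\pi_0 = \left[1 + \lambda_1/(\lambda_3+\lambda_2-\lambda_1) + \lambda_2/(\lambda_3+\lambda_1-\lambda_2)\right]^{-1} = \alpha$, which matches (\ref{eq:defalphaPendant}) after clearing denominators.

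Finally, the initial condition $\bar Q^n(0) \Rightarrow x \boe_4$ satisfies Assumption \ref{AssInitial}, so Theorem \ref{thFWLLN} applies and immediately produces $\bar q_i \equiv 0$ for $i \in \{1,2,3\}$ together with $\bar q_4(t) = x + (\lambda_4 - \lambda_3 \alpha)t$. The formula for $\rho$ (namely $\rho = x/(\lambda_3\alpha - \lambda_4)$ when $\alpha > \lambda_4/\lambda_3$ and $\rho = \infty$ otherwise) then follows from the second statement of Theorem \ref{thFWLLN}. The only genuinely delicate point, ensuring that the AP on which Theorem \ref{thFWLLN} relies applies at all, is already handled by that theorem; the present argument is therefore mostly a computation once ergodicity and reversibility of the marginal chain are observed.
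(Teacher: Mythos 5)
Your proof is correct and follows the same route as the paper: set $i_0=4$, identify the marginal process on $\mathbb{E}_2$ from Example~\ref{ex:2}, note $\maP_3^{\maS}(4)=\{(0,0)\}$, verify ergodicity and compute $\pi$ via detailed balance for the two reversible birth-death arms, normalize to get $\pi((0,0))=\alpha$, and invoke Theorem~\ref{thFWLLN}. The only difference is that you spell out the detailed-balance and normalization computation that the paper summarizes with ``it is easy to check.''
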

\begin{proof}
The result follows from Theorem \ref{thFWLLN}. In the present case, we set $i_0=4$, so that
the marginal process $\chi$ is a $\mathbb E_2$-valued CTMC having the generator
$\mathscr A^{\maS}$ in (\ref{eq:transPendant0}) (see Example \ref{ex:2}).
For $\af$ in \eqref{eq:defalphaPendant}, let $\pi(0,0) = \af$ and
\[\pi(x)=\left\{\begin{array}{ll}
\alpha\left({\lambda_1 \over \lambda_3+\lambda_2}\right)^i&\mbox{ for }x=(i,0),\,i \ge 1;\\
\alpha\left({\lambda_2 \over \lambda_3+\lambda_1}\right)^j&\mbox{ for }x=(0,j),\,i \ge 1.
\end{array}
\right.
\]
It is easy to check that under (\ref{eq:NcondPendant}),
$\pi$ is a probability vector satisfying the detailed balance equations for $\chi$,
so that it is the unique stationary distribution of this reversible CTMC. In particular, Assumption \ref{AssErgo} holds.
Since items of class 3 give priority to $1$ and $2$ over $4$, we have that $\maP^{\maS}_3(4) = (0,0),$
so the stated convergence of $Q^n$ to the fluid limit in \eqref{FluidPendant} follows from (\ref{FluidLim}).
\end{proof}

\begin{proposition} \label{propPendant_iff}
The matching queue $(G,\lm,\Phi)_{\ssc}$ corresponding to the pendant graph $G$ and the
priority policy $\Phi$ represented in figure \ref{Fig:pendant} is
stable if and only if {\sc Ncond} holds together with
\begin{equation}
\label{eq:CNSpendant} \lm_4 < \af \lm_3,
\end{equation}
for $\af$ in \eqref{eq:defalphaPendant}.
\end{proposition}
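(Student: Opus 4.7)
The plan is to treat necessity and sufficiency of the stated condition separately. Necessity of {\sc Ncond} is immediate from Theorem \ref{thNcond}, so I focus on the additional condition $\lambda_4 < \alpha \lambda_3$. For necessity of this condition, I would argue by contraposition: assume $\lambda_4 \ge \alpha \lambda_3$ and take the deterministic initial condition $Q^n(0) = n \boe_4$, so that $\barq^n(0) \Rightarrow \boe_4$ as $n \to \infty$. Proposition \ref{pro:Pendant} then identifies the fluid limit as $\barq_4(t) = 1 + (\lambda_4 - \alpha \lambda_3)\,t$, which is nondecreasing. Since $\{i \in \maV : \barq_i(0) > 0\} = \{4\} \neq \emptyset$ and $\barq_4$ is nondecreasing, Lemma \ref{lmUnstable} implies that the CTMC $Q$ is not positive recurrent, i.e., the matching queue $(G,\lambda,\Phi)_{\ssc}$ is unstable.

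For sufficiency, assume that both {\sc Ncond} and $\lambda_4 < \alpha \lambda_3$ hold. I would invoke the fluid-stability criterion of \cite{DaiFluid}: it suffices to show that for every admissible fluid initial condition $\barq(0)$, the corresponding fluid trajectory reaches the origin in finite time. Because the state space $\mathbb G$ forces the support of $\barq(0)$ to be an independent set of the pendant graph, we only need to consider initial conditions whose support is one of $\{1\},\{2\},\{3\},\{4\},\{1,4\},\{2,4\}$. The case $\barq(0) = x\boe_4$ with $x>0$ is handled directly by Proposition \ref{pro:Pendant}, which yields linear decrease of $\barq_4$ at rate $\alpha \lambda_3 - \lambda_4 > 0$.

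For the remaining cases I would trace the fluid evolution phase by phase, using a standard FSLLN rather than the full fast-averaging FWLLN whenever possible. If only $\barq_3(0)>0$, every arrival to a neighbor of $3$ triggers a match, so $\barq_3$ drains at rate $\lambda_1+\lambda_2+\lambda_4-\lambda_3$, which is strictly positive by {\sc Ncond}. If only $\barq_1(0)>0$ (symmetrically $\barq_2(0)>0$), the priority rule matches class-$3$ arrivals with class-$1$ items, so $\barq_1$ drains at rate $\lambda_2+\lambda_3-\lambda_1 > 0$ (again by {\sc Ncond}) while simultaneously $\barq_4$ grows linearly at rate $\lambda_4$; after finite time $\barq_1$ hits $0$, delivering an initial condition with only $\barq_4>0$ to which Proposition \ref{pro:Pendant} applies. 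The two-node cases $\{1,4\}$ and $\{2,4\}$ reduce analogously. In every scenario, the fluid trajectory reaches the origin in a deterministic finite time bounded uniformly in $\|\barq(0)\|$, so Dai's theorem yields positive Harris recurrence of $Q$.

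The main obstacle is the rigorous concatenation of fluid trajectories across phase transitions, and in particular the justification that the stochastic dynamics converge to the piecewise-linear fluid limit uniformly on compact time intervals that cross the instant $\barq_1$ (or $\barq_2$) vanishes. In each phase, however, either the relevant queue is driven by a small number of independent Poisson processes (so a direct FSLLN suffices) or the analysis reduces to the one node carrying macroscopic mass with ergodic marginal (so Theorem \ref{thFWLLN} applies essentially verbatim, with the reversible stationary distribution already exhibited in the proof of Proposition \ref{pro:Pendant}). Continuity of the limiting trajectory at the transition times then allows the phases to be stitched together, completing the sufficiency argument.
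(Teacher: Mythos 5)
Your proposal is correct and follows essentially the same route as the paper's own proof: necessity is read off from Proposition \ref{pro:Pendant} and Lemma \ref{lmUnstable}, and sufficiency proceeds by applying Dai's criterion to a case analysis of fluid initial conditions supported on independent sets, noting that in each phase the positive queue is a birth--death-type process with negative drift under {\sc Ncond}, with Proposition \ref{pro:Pendant} closing the final $\boe_4$-phase.
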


\begin{proof}
The necessity of {\sc Ncond} has been shown in
Theorem \ref{thNcond}. Also, it follows from Proposition
\ref{pro:Pendant} that, for any initial condition of the form
$(0,0,0,x)$, $x > 0$, the fluid limit $\barq$ will hit the origin if
and only if $\lm_4 < \af\lm_3$. The necessity of
(\ref{eq:CNSpendant}) then follows from Lemma \ref{lmUnstable}.
To show sufficiency, we apply Dai's
result in \cite{DaiFluid}. To that end, we must consider {\em all
possible initial conditions} for the fluid limit, and show that the
origin is guaranteed to be hit in finite time.

First assume that $\barq_3(0) > 0$, so that all other queues are empty initially.
In that case, and as long as the class-$3$ queue is strictly positive, its drift down (toward $0$)
is $\lm_4 + \lm_1 + \lm_2$, which is larger than the drift up $\lm_3$ by (\ref{eq:NcondPendant}).
In particular, during the initial interval over which $\barq_3 > 0$,
the class-$3$ queue is distributed as an ergodic birth and death (BD) process whose fluid limit is
known to be ({\em e.g.}, Proposition 5.16 in \cite{RobertBook})
$$\barq_3(t) = \barq_3(0) + (\lm_3 - \lm_4 - \lm_1 - \lm_2)t, \quad 0\le t \le {\barq_3(0) \over \lm_4 + \lm_1 + \lm_2 - \lm_3},$$
so that the fluid queue hits the origin in finite time.

Now assume that $\barq_4(0) > 0$. Then at most one of $\barq_1(0)$
or $\barq_2(0)$ can be strictly positive. Say $\barq_1(0) > 0$. In
that case, the matching policy we consider implies that, as long as
$\barq_1 > 0$, all the arriving items of classes $3$ and $2$ are
matched with class-$1$ items. Hence, as long as the class-1 queue
process is positive, it is distributed as a BD process having a
constant birth rate $\lm_1$ and a constant death rate $\lm_3 +
\lm_2$. This BD process is ergodic due to (\ref{eq:NcondPendant}),
and its fluid limit is \bes \barq_1(t) = \barq_1(0) + (\lm_1 - \lm_3
- \lm_2)t, \quad 0 \le t \le {\barq_1(0) \over \lm_3+\lm_2-\lm_1}.
\ees In particular, the fluid process $\barq_1$ will hit $0$ in
finite time, so that $\barq$ will hit the origin in finite time by
Proposition \ref{pro:Pendant}. A similar argument applies when $\barq_2(0) > 0$.

Now, since the prelimit processes $Q_i$, $i = 1,2,3$ have drifts towards $0$ whenever any of them is strictly positive,
the fluid limit must remain in state $0$ after hitting this state, and Proposition \ref{pro:Pendant} shows that $\barq_4$ will also remain fixed at $0$ after hitting that state.
Thus, the ergodicity of the system follows from Theorem 4.2 in \cite{DaiFluid}.
\end{proof}

\begin{remark}
\label{remDiscrete} {\em A discrete version of Proposition
\ref{propPendant_iff} was proved in \cite{MaiMoyal15} for the
symmetric model $\left(G,\mu,\Phi\right)_{\sd}$ in which $\mu_\lm(1) =
\mu_\lm(2)$ (so that a lower-dimensional process can be considered), via
subtle Lyapunov-stability arguments. Plugging $\lm_1 = \lm_2$ in
\eqref{eq:defalphaPendant} and recalling (\ref{eq:defmucont}),
Proposition \ref{propPendant_iff} gives that the discrete model
$\left(G,\mu_\lm,\Phi\right)_{\sd}$ corresponding to Figure
\ref{Fig:pendant} is stable if and only if
\begin{equation}\label{eq:NcondPendantSym}
\mu_{\lm}(4) < \mu_{\lm}(3) < \mu_{\lm}(4)+2\mu_{\lm}(1)\,\mbox{ and
}\,\left(\mu_{\lm}(3)\right)^2 >
\mu_{\lm}(4)(1-\mu_{\lm}(4)).\end{equation} As
$\mu_{\lm}(2)=\mu_{\lm}(1)$, the left-hand condition above is
equivalent to
\[\left\{\begin{array}{ll}
\mu_{\lm}(3) &<\mu_{\lm}(4)+\mu_{\lm}(1)+\mu_{\lm}(2) =\mu_{\lm}\left(\maE(3)\right);\\
\mu_{\lm}(4) &< \mu_{\lm}(3)=\mu_{\lm}\left(\maE(4)\right)\\
\mu_{\lm}(1) &< \mu_{\lm}(3)+\mu_{\lm}(1)=\mu_{\lm}(3)+\mu_{\lm}(2)=\mu_{\lm}\left(\maE(1)\right)\\
\mu_{\lm}(2) &< \mu_{\lm}(3)+\mu_{\lm}(2)=\mu_{\lm}(3)+\mu_{\lm}(1)=\mu\left(\maE(2)\right)\\
\mu_{\lm}\left(\{1,4\}\right) &= \mu_{\lm}(1) + \mu_{\lm}(4)
<\mu_{\lm}(2)+\mu_{\lm}(3) =
\mu_{\lm}\left(\maE\left(\{1,4\}\right)\right);\\
\mu_{\lm}\left(\{2,4\}\right) &= \mu_{\lm}(2) + \mu_{\lm}(4)
<\mu_{\lm}(1)+\mu_{\lm}(3) =
\mu_{\lm}\left(\maE\left(\{2,4\}\right)\right).
\end{array}\right.\]
Thus, the measure $\mu_{\lm}$ satisfies the condition {\sc Ncond}($G$)
in p.5 of \cite{MaiMoyal15}. It is easy to see that
the second condition in (\ref{eq:NcondPendantSym}) is equivalent
to the right-hand condition defining the region denoted {\sc Stab}$(A)$ in Proposition 3 of \cite{MaiMoyal15} (after re-indexing
the nodes according to Figure 1 in \cite{MaiMoyal15}).
In particular, a measure $\mu_\lm$ with $\mu_\lm(1) = \mu_\lm(2)$ satisfies (\ref{eq:NcondPendantSym}) if and only if
it belongs to {\sc Stab}$(A)$, so we retrieved the stability
condition that was established in Proposition 3 of \cite{MaiMoyal15} for that particular case. (Note however, that we do not require $\mu_\lm(1) = \mu_\lm(2)$.)}
\end{remark}

As the following shows, the stability region of the model in Figure
\ref{Fig:pendant}, namely
$$\text{{\sc Ncond}}_{\ssc}(G) \cap \left\{\lambda \mbox{ satisfying (\ref{eq:CNSpendant}) }\right\},$$
is strictly contained in $\text{{\sc Ncond}}_{\ssc}(G)$.
\begin{proposition} \label{prop:nonsufficientpendant}
We have the strict inclusion
\[\left\{\lambda \mbox{ satisfying (\ref{eq:CNSpendant}) }\right\}\,\bigcap\, \mbox{{\sc Ncond}}_{\ssc}(G) \subsetneq \mbox{{\sc Ncond}}_{\ssc}(G).\]
\end{proposition}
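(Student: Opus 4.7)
The plan is to exhibit an explicit arrival-rate vector $\lambda$ lying in $\textsc{Ncond}_{\ssc}(G)$ but violating (\ref{eq:CNSpendant}). Since $\textsc{Ncond}_{\ssc}(G)\supseteq \{\lambda \text{ satisfying (\ref{eq:CNSpendant})}\}\cap \textsc{Ncond}_{\ssc}(G)$ by definition, producing any $\lambda$ in the former but not the latter yields strict inclusion.

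First I would observe that $\alpha<1$ strictly whenever $\lambda_1,\lambda_2>0$. Indeed, from \eqref{eq:defalphaPendant},
\[
\alpha\lambda_3 = \frac{\lambda_3^2-(\lambda_1-\lambda_2)^2}{\lambda_3+\lambda_1+\lambda_2},
\]
and since $\lambda_3(\lambda_1+\lambda_2)>0\ge -(\lambda_1-\lambda_2)^2$, the numerator is strictly smaller than $\lambda_3(\lambda_3+\lambda_1+\lambda_2)$, whence $\alpha\lambda_3<\lambda_3$. Thus the open interval $(\alpha\lambda_3,\lambda_3)$ of admissible values for $\lambda_4$ is non-empty; any $\lambda_4$ in this window satisfies the $\textsc{Ncond}$ inequality $\lambda_4<\lambda_3$ while violating (\ref{eq:CNSpendant}).

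Next I would verify that one can choose $\lambda_4$ in this window while also satisfying the remaining $\textsc{Ncond}$ inequalities \eqref{eq:NcondPendant}. The simplest route is an explicit symmetric example: set $\lambda_1=\lambda_2=1$, $\lambda_3=2$, so that $\alpha\lambda_3=4/4=1$, and pick $\lambda_4=3/2$. A short check gives $\lambda_4<\lambda_3<\lambda_4+\lambda_1+\lambda_2$ (i.e.\ $3/2<2<7/2$) and $\lambda_4+\lambda_i=5/2<3=\lambda_3+\lambda_{3-i}$ for $i=1,2$, so $\lambda\in\textsc{Ncond}_{\ssc}(G)$. However $\lambda_4=3/2>1=\alpha\lambda_3$, so (\ref{eq:CNSpendant}) fails and, by Proposition \ref{propPendant_iff}, the matching queue $(G,\lambda,\Phi)_{\ssc}$ is unstable. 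This gives the strict inclusion claimed.

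There is essentially no obstacle here beyond the computation; the work was really done in Proposition \ref{propPendant_iff}. The only point worth highlighting is that the non-emptiness of $(\alpha\lambda_3,\lambda_3)$ relies on $\lambda_1,\lambda_2$ being strictly positive, which is built into our standing assumption $\lambda\in\RR_{++}^{|\maV|}$; in particular, even in the fully symmetric case $\lambda_1=\lambda_2$, one still has $\alpha=\lambda_3/(\lambda_3+2\lambda_1)<1$, so the counterexample remains valid throughout the positive cone.
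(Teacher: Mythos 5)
Your proof is correct and takes essentially the same approach as the paper's: both exhibit an explicit arrival-rate vector in $\textsc{Ncond}_{\ssc}(G)$ that violates (\ref{eq:CNSpendant}). The paper uses a one-parameter family $\lambda_1=\lambda_2=\epsilon/2$, $\lambda_3=1/2-\epsilon/4$, $\lambda_4=1/2-3\epsilon/4$ with $\epsilon\in(0,2/5]$, whereas you give the single point $(1,1,2,3/2)$ together with a general observation that $\alpha\lambda_3<\lambda_3$ always holds; both are sound, and your preliminary remark is a nice way to see that the gap $(\alpha\lambda_3,\lambda_3)$ is never empty in the positive cone.
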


\begin{proof}
Fix $\epsilon \in \left(0,{2/ 5}\right]$ and set
\[\left\{\begin{array}{ll}
\lambda_1&=\lambda_2={\epsilon / 2};\\
\lambda_3&={1\over 2}-{\epsilon / 4};\\
\lambda_4&={1\over 2}-{3\epsilon / 4}.
\end{array}\right.\]
It is then easily checked that $\lambda \in \mbox{{\sc Ncond}}_{\ssc}(G)$. However, simple algebra shows that
\begin{equation*}
\lambda_4 - \alpha\lambda_3 ={1\over 2}-{3\epsilon \over 4}-\left({1\over 2} - {\epsilon \over 4}\right){{1/2}-{\epsilon / 4} \over
{1/2}+{3\epsilon / 4}}= {{\epsilon / 4}\left(1-{5\epsilon / 2}\right) \over {1/ 2}+{3\epsilon / 4}} \ge 0,
\end{equation*}
so that $\lambda$ does not satisfy (\ref{eq:CNSpendant}).
\end{proof}

\subsection{The 5-cycle} \label{subsec:5cycle}
We now consider the matching queue corresponding to the $5$-cycle,
under the priority policy depicted in Figure \ref{Fig:5cycle}: nodes $1$ and $2$ prioritize each other, node $3$ gives priority to node $1$,
and node $4$ gives priority to node $2$.
For concreteness, we assume that node $5$ gives priority to node $4$.

As for the pendant graph analyzed above, the stability region of the
$5$-dimensional CTMC $Q$ is challenging, even in symmetric cases;
However, stability analysis is made considerably more simple {\em
via} the fluid limits analysis. In particular, we now obtain a
necessary and sufficient condition for stability of the matching
queue corresponding to the $5$-cycle and the matching policy $\Phi$
specified above, so that the stability region of the model is fully
characterized.

\begin{figure}[h!]
\begin{center}
\begin{tikzpicture}
\fill (1,2) circle (1.5pt) node[above]{5};
\fill (0,1) circle (1.5pt) node[left] {3};
\fill (2,1) circle (1.5pt) node[right] {4};
\fill (0,0) circle (1.5pt) node[below left] {1};
\fill (2,0) circle (1.5pt) node[below right] {2};
\draw[-] (0,0) -- (2,0);
\draw[-] (0,0) -- (0,1);
\draw[->,thick] (0,0.6) -- (0,0.4);
\draw[->,thick] (1.4,1.6) -- (1.6,1.4);
\draw[-] (2,0) -- (2,1);
\draw[->,thick] (2,0.6) -- (2,0.4);
\draw[-] (0,1) -- (1,2);
\draw[-] (2,1) -- (1,2);
\draw[->,thick] (1.8,0) -- (1.6,0);
\draw[->,thick] (0.2,0) -- (0.4,0);
\end{tikzpicture}
\caption[smallcaption]{The 5-cycle; arrows indicate priorities.}\label{Fig:5cycle}
\end{center}
\end{figure}
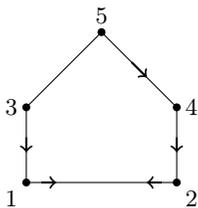



\noindent Let
\bequ \label{a1a2}
a := {\lambda_3(\lambda_1+\lambda_4) \over \lambda_1+\lambda_4-\lambda_2} + {\lambda_4 (\lambda_2+\lambda_3) \over \lambda_2+\lambda_3-\lambda_1}.
\eeq
\begin{proposition}
\label{pro:5cycle} Let $G$ be the 5-cycle, $\Phi$ be the priority
matching policy depicted in Figure \ref{Fig:5cycle}, and consider
$\lambda \in ${\sc Ncond}$_{\ssc}(G)$. Assume that for some $x \in
\RR_{++}$, $\barq^n(0) \Rightarrow x \,\boe_5$ in $\RR^5$. Then,
$\barq^n \Rightarrow \barq$ in $\D^5[0, \tilde{\rho})$ as $n\tinf$,
where, for $a$ in \eqref{a1a2},
\begin{align*}
\barq(t) & = \Bigl(0, 0, 0, 0, x + \left(\lambda_5 - a \tilde\alpha \right)t\Bigl), \quad 0 \le t < \tilde\rho;\\
\end{align*}
for 
\begin{equation} \label{eq:defalpha5cycle}
\tilde\alpha=\left[1+{\lambda_1 \over \lambda_2+\lambda_3-\lambda_1}+{\lambda_2 \over \lambda_1+\lambda_4-\lambda_2}\right]^{-1},
\end{equation}
and
$$\tilde\rho = {x \over a \tilde\af - \lm_5} \qifq \lm_5<a\tilde\af  \qandq \tilde\rho := \infty \text{ otherwise}.$$
\end{proposition}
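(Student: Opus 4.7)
The plan is to apply Theorem \ref{thFWLLN} directly with $i_0=5$, since Proposition \ref{pro:5cycle} has exactly the form of the conclusion of that theorem. All the work consists in identifying the marginal process associated with node $5$, verifying its ergodicity under \textsc{Ncond}, computing its stationary distribution in closed form, and then evaluating the two quantities $\pi(\mathcal{P}^{\maS}_3(5))$ and $\pi(\mathcal{P}^{\maS}_4(5))$ appearing in \eqref{CondDecrease}.

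First, I would identify the marginal process. Since $\maE(5)=\{3,4\}$, the index set \eqref{eq:defS} is $\maS=\{1,2\}$, which I label $i_1=1$, $i_2=2$; the state space \eqref{eq:defmGS} is then $\mathbb{G}^{\maS}=\mathbb{E}_2$ exactly as in \eqref{eq:defE2}. Next I would unpack $\mathscr{A}^{\maS}$ from \eqref{XnGenerator} using the priority rules read off Figure \ref{Fig:5cycle}. Note that $\Phi_1(2)=\Phi_2(1)=\emptyset$ (nodes $1$ and $2$ prioritize each other), $\Phi_3(1)=\emptyset$ (node $3$ already prioritizes $1$), and $\Phi_4(2)=\emptyset$ (node $4$ already prioritizes $2$). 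Consequently $\chi$ has the following positive transition rates: from $(i,0)$, transitions at rate $\lambda_1$ to $(i+1,0)$ and, when $i\ge 1$, at rate $\lambda_2+\lambda_3$ to $(i-1,0)$; symmetrically, from $(0,j)$, transitions at rate $\lambda_2$ to $(0,j+1)$ and, when $j\ge 1$, at rate $\lambda_1+\lambda_4$ to $(0,j-1)$. The two branches interact only through the state $(0,0)$.

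Second, I would verify Assumption \ref{AssErgo} by constructing a reversible stationary distribution. Setting $\pi(0,0)=\tilde\alpha$, the detailed-balance equations along the two axes are satisfied if and only if
\bes
\pi(k,0)=\tilde\alpha\,\Bigl(\tfrac{\lambda_1}{\lambda_2+\lambda_3}\Bigr)^{k},\qquad \pi(0,k)=\tilde\alpha\,\Bigl(\tfrac{\lambda_2}{\lambda_1+\lambda_4}\Bigr)^{k},\quad k\ge 1.
\ees
Applying \textsc{Ncond}$_{\ssc}(G)$ to the independent sets $\{1\}$ and $\{2\}$ gives $\lambda_1<\lambda_2+\lambda_3$ and $\lambda_2<\lambda_1+\lambda_4$, so both series converge and summing them yields precisely the normalization \eqref{eq:defalpha5cycle}. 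This makes $\chi$ reversible and positive recurrent.

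Third, I would compute the two relevant stationary masses. From the priority rules, $\Phi_3(5)=\{1\}$, so $\mathcal{P}^{\maS}_3(5)=\{x\in\mathbb{E}_2:\,x_1=0\}=\{0\}\times\mathbb{Z}_+$, whence $\pi(\mathcal{P}^{\maS}_3(5)) = \tilde\alpha\,(\lambda_1+\lambda_4)/(\lambda_1+\lambda_4-\lambda_2)$ after summing the second geometric series. Likewise $\Phi_4(5)=\{2\}$ yields $\mathcal{P}^{\maS}_4(5)=\mathbb{Z}_+\times\{0\}$ and $\pi(\mathcal{P}^{\maS}_4(5))=\tilde\alpha\,(\lambda_2+\lambda_3)/(\lambda_2+\lambda_3-\lambda_1)$. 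Plugging these two masses into the drift expression $\lambda_{i_0}-\sum_{j\in\maE(i_0)}\lambda_j\pi(\mathcal{P}^{\maS}_j(i_0))$ of \eqref{CondDecrease}, the bracketed sum factors as $\tilde\alpha\,a$ for $a$ in \eqref{a1a2}, so the drift reduces to $\lambda_5-a\tilde\alpha$. Assumption \ref{AssInitial} is the hypothesis $\bar Q^n(0)\Rightarrow x\boe_5$, and Theorem \ref{thFWLLN} then delivers the announced convergence $\bar Q^n\Rightarrow\bar q$ in $\D^5[0,\tilde\rho)$ with the stated expression for $\bar q$ and hitting time $\tilde\rho=x/(a\tilde\alpha-\lambda_5)$ when $\lambda_5<a\tilde\alpha$, $\tilde\rho=\infty$ otherwise.

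There is no substantive obstacle beyond careful bookkeeping; the one place to be cautious is the correct determination of the priority sets $\Phi_j(5)$ and $\Phi_j(i_\ell)$ for $\ell\in\{1,2\}$ from the arrow conventions of Figure \ref{Fig:5cycle}, since any misidentification would distort both the marginal generator and the final drift.
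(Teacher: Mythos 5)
Your proposal is correct and follows essentially the same route as the paper's proof: set $i_0=5$, read off the marginal generator $\tilde{\mathscr A}^{\maS}$ on $\mathbb E_2$ from the priority structure, verify reversibility and solve detailed balance under {\sc Ncond} to get $\tilde\pi$ with normalizer $\tilde\alpha$, identify $\maP^{\maS}_3(5)=\{0\}\times\ZZ_+$ and $\maP^{\maS}_4(5)=\ZZ_+\times\{0\}$, and invoke Theorem \ref{thFWLLN}. You merely spell out the geometric-series evaluations of $\tilde\pi(\maP^{\maS}_j(5))$ that the paper leaves implicit; the drift computation reducing to $\lambda_5-a\tilde\alpha$ is correct.
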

It will be seen in the proof of Proposition \ref{pro:5cycle} that $\tilde \af$ is the normalizing constant that makes the solution
to the detailed balance equations a probability vector.
Note that {\sc Ncond}$_{\ssc}(G)$ implies that $a$ in \eqref{a1a2} and $\tilde\af$ in \eqref{eq:defalpha5cycle} are well-defined.
To see this, consider the independent sets $\{1\}$ and $\{2\}$, whose neighboring sets are $\maE(1) = \{2,3\}$ and $\maE(2) = \{1,4\}$, respectively.

\begin{proof}
Set $i_0=5$. In this case, from (\ref{XnGenerator}) the generator of
the associated $\mathbb E_2$-valued marginal process $\chi$
has the following positive terms,
\begin{equation}
\label{eq:trans5cycle}
\left\{\begin{array}{ll}
\tilde{\mathscr{A}}^{\maS}(x,x+\gre_1)=\lambda_1 &\mbox{ for }x\in \ZZ_+\times \{0\};\\
\tilde{\mathscr{A}}^{\maS}(x,x-\gre_1)=\lambda_3+\lambda_2 &\mbox{ for }x\in \ZZ_{++}\times \{0\};\\
\tilde{\mathscr{A}}^{\maS}(x,x+\gre_2)=\lambda_2 &\mbox{ for }x \in \{0\}\times \ZZ_+;\\
\tilde{\mathscr{A}}^{\maS}(x,x-\gre_2)=\lambda_1+\lambda_4 &\mbox{ for }x\in \{0\}\times \ZZ_{++}.
\end{array}\right.
\end{equation}
As in the proof of Proposition \ref{pro:Pendant}, one can easily check that under {\sc Ncond}$_{\ssc}(G)$,
\begin{equation}
\label{eq:defpitilde}\tilde \pi(x) = \left\{\begin{array}{ll}
\tilde\alpha\left({\lambda_1 \over \lambda_3+\lambda_2}\right)^i&\mbox{ for }x=(i,0),\,i\in\ZZ_+;\\
\tilde\alpha\left({\lambda_2 \over \lambda_1+\lambda_4}\right)^j&\mbox{ for }x=(0,j),\,i\in\ZZ_+
\end{array}
\right.
\end{equation}
is 
the unique stationary distribution of $\chi$. Since
\[\maP^{\maS}_3(5)=\{0\}\times\ZZ_+\mbox{ and }\maP^{\maS}_4(5)=\ZZ_+\times\{0\},\]
the statement follows from Theorem \ref{thFWLLN}.
\end{proof}

\begin{corollary} \label{cor:5cycle}
A necessary condition for the matching queue in Figure \ref{Fig:5cycle} to be stable is
\bequ \label{necessary5cycle}
\lm_5 < a \tilde\af, \quad \text{for $a$ in \eqref{a1a2}}.
\eeq
\end{corollary}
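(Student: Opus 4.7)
The plan is to derive Corollary \ref{cor:5cycle} as a direct contrapositive consequence of the fluid-limit analysis in Proposition \ref{pro:5cycle}, combined with the instability criterion in Lemma \ref{lmUnstable}. No new technical machinery is needed; the work has already been done in establishing the FWLLN and identifying the explicit stationary distribution $\tilde\pi$ of the associated marginal process.

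Concretely, I would proceed as follows. Assume toward a contrapositive that $\lm_5 \geq a\tilde\af$, and fix any $x \in \RR_{++}$ together with a sequence of initial conditions $Q^n(0) \in \mathbb G$ satisfying Assumption \ref{AssInitial} with $\bar Q^n(0) \Rightarrow x\,\boe_5$ in $\RR^5$. The ergodicity of the marginal process $\chi$ (Assumption \ref{AssErgo}) follows from the explicit stationary distribution $\tilde\pi$ in \eqref{eq:defpitilde}, which is well-defined under $\lambda \in \textsc{Ncond}_{\ssc}(G)$. Proposition \ref{pro:5cycle} then applies and yields, under the assumption $\lm_5 \geq a\tilde\af$, the value $\tilde\rho = \infty$, so that the convergence $\bar Q^n \Rightarrow \bar q$ holds on all of $\D^5[0,\infty)$ with
\begin{equation*}
\bar q(t) = \Bigl(0,0,0,0,\,x + (\lm_5 - a\tilde\af)t\Bigl), \quad t \ge 0.
\end{equation*}

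Since the coefficient $\lm_5 - a\tilde\af$ is non-negative by assumption, $\bar q_5$ is non-decreasing on $[0,\infty)$. Taking the set $\maP := \{i \in \maV : \bar q_i(0) > 0\} = \{5\}$ in Lemma \ref{lmUnstable}, all hypotheses of that lemma are satisfied with $i = 5 \in \maP$. It follows that the CTMC $Q$ is either transient or null recurrent, hence the matching queue $(G,\lambda,\Phi)_{\ssc}$ is unstable. Contrapositively, stability of the system forces $\lm_5 < a\tilde\af$, which is precisely \eqref{necessary5cycle}.

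There is no real obstacle, since the two key ingredients — the explicit fluid trajectory for $\bar q_5$ and the general instability principle for non-decreasing fluid components — have already been established. The only thing to check carefully is that the initial condition used in Proposition \ref{pro:5cycle} is genuinely admissible, i.e.\ that states of the form $Q^n(0) \in \mathbb G$ with only node $5$ populated exist; this is immediate since $\{5\}$ is an independent set of the $5$-cycle, so the constraint $z_i z_j = 0$ for $i \v j$ in the definition of $\mathbb G$ is trivially satisfied.
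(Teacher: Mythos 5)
Your proof is correct and essentially matches the paper's (implicit) route: the corollary follows from combining Proposition \ref{pro:5cycle}'s fluid limit, which gives $\tilde\rho = \infty$ and a non-decreasing $\bar q_5$ when $\lm_5 \geq a\tilde\af$ (within $\textsc{Ncond}_{\ssc}(G)$, which is itself necessary by Theorem \ref{thNcond}), with Lemma \ref{lmUnstable} (or equivalently Corollary \ref{corNS}, which is just that lemma packaged together with Theorem \ref{thFWLLN}). You also correctly check the admissibility of the initial condition and the ergodicity of the marginal process via the explicit $\tilde\pi$ in \eqref{eq:defpitilde}.
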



We next present a sufficient condition for the stability of the matching queue corresponding to Figure \ref{Fig:5cycle}.
Unlike the pendant graph, that sufficient condition is strictly stronger than the necessary condition of Corollary \ref{cor:5cycle}.
First note that nodes $1$ and $2$ are always stable, for any
$\lambda$ in {\sc Ncond}$_{\ssc}(G)$. This can be easily seen by observing that the drift down of the class $1$ queue process,
whenever the latter is positive, equals $\lm_2+\lm_3$,  which is
strictly less than the drift up $\lm_1$. Similarly, the downward
drift to $0$ of the class $2$ queue process is $\lm_1 + \lm_4$,
which is less than the upward drift $\lm_2$. It remains to consider
nodes $3$ and $4$.

\paragraph{Stability of Node 3}
If $\barq_3(0) > 0$, then $\barq_3$ is strictly positive over an interval $I \subset [0, \infty)$.
Over this interval $I$, the class-$2$ and class-$4$ queue processes behave as a fast-time-scale CTMC.
Just as Proposition \ref{pro:5cycle}, from Theorem \ref{thFWLLN} we obtain that, if $\barq^n(0) \Ra \barq_3(0)$ and $\barq^n_i(0) \Ra 0$, $i \ne 3$ in $\RR$ as $n\tinf$,
then $\barq^n \Ra \barq$ in $\D^5[0, \rho_3)$ as $n\tinf$, where
\bes
\left\{\begin{array}{ll}
\barq_3(t) & = \barq_3(0) + \Bigl[\lm_3 - c\af_{(2,4)}\Bigl]t, \quad 0 \le t < \rho_3, \\
   & \\
\barq_i(t) & = 0, \quad 0 \le t < \rho_3, \quad i \in \llbracket 1,5 \rrbracket \setminus \{3\},
\end{array}\right.
\ees
where
\bes 
\bsplit
\af_{(2,4)} & := \left(1 + {\lm_2 \over \lm_1 + \lm_4 - \lm_2} + {\lm_4 \over \lm_5+\lm_2-\lm_4}\right)^{-1}, \\
c& := {\lm_5(\lm_1+\lm_4) \over \lm_1 + \lm_4 - \lm_2} + {\lm_1(\lm_5+\lm_2) \over \lm_5+\lm_2-\lm_4}, \\
\rho_3 & := {\barq_3(0) \over (c_0+c_1)\af_{(2,4)} - \lm_3} \qifq \af_{(2,4)} > {\lm_3 \over c_0+c_1} \qandq \rho_3 := \infty, \text{ otherwise.}
\end{split}
\ees
Hence, in addition to requiring that {\sc Ncond}$_{\ssc}(G)$ holds, we must have $\rho_3 < \infty$, so that the fluid limit $\barq_3$ reaches $0$ in finite time.
In particular, a necessary condition for the stability of the model is that
\bequ \label{af24}
\lm_3 < c \af_{(2,4)},
\eeq
which is not implied by the necessary condition in Corollary \ref{cor:5cycle}.

\paragraph{Stability of Node 4}

Since node $5$ gives priority to class $4$ over class $3$, the
instantaneous downward drift of the class $4$ queue process, at any
time $t$ in which it is strictly positive, is $\lm_5 + \lm_2
\ind_{\{Q_1(t) = 0\}}$, while its upward drift is the constant
$\lm_4$. Then, Theorem \ref{thFWLLN} implies again that, if
$\barq^n_4(0) \Ra \barq_4(0)$, for some $\barq_4(0) > 0$, and
$\barq^n_i(0) \Ra 0$ in $\RR$ as $n\tinf$, $i \ne 4$, then over some
interval $I \subset [0, \infty)$ it holds that $\barq^n \Ra \barq$
in $\D^5[0, \rho_4)$, where \bes \left\{\begin{array}{ll}
\barq_4(t) & = \barq_4(0) + \left(\lm_4 - \lm_5 - \lm_2 \af_{(1,3)} \right) t, \quad 0 \le t < \rho_4, \\
& \\
\barq_i(t) & = 0, \quad 0 \le t < \rho_4, \quad i \in \llbracket 1,5 \rrbracket\setminus\{4\},
\end{array}
\right. \ees for \bes \af_{(1,3)} : = 1-{\lm_1 \over \lm_2 + \lm_3},
\ees and \bes \rho_4 := {\barq_4(0) \over \lm_5 + \lm_2 \af_{(1,3)}
- \lm_4} ~\text{ if } \af_{(1,3)} > (\lm_4 - \lm_5)/\lm_2 \qandq
\rho_4 := \infty \text{ otherwise.} \ees Therefore, in order for
node $4$ to be stable, we require that, in addition to having {\sc
Ncond}$_{\ssc}(G)$ hold, $\rho_4 < \infty$, or equivalently,
\bequ
\label{af13}
\lm_4 < \lm_2 \af_{(1,3)} +\lm_5.
\eeq

\paragraph{The Stability Region of the model of Figure \ref{Fig:5cycle}}

Similarly to the proof of Proposition \ref{propPendant_iff}, one can
show that, if each of the necessary conditions for stability of each
of the nodes holds, then the model is stable. In particular,
\begin{proposition} \label{th5cycle_iff}
The model $(G,\lm,\Phi)_{\ssc}$ corresponding to the $5$-cycle and the matching policy in Figure \ref{Fig:5cycle}
is stable if and only if $\lambda \in \mbox{{\sc Ncond}}_{\ssc}(G)$ and
all three inequalities \eqref{necessary5cycle}, \eqref{af24} and \eqref{af13} hold.
\end{proposition}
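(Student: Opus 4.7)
The plan is to follow the same template as the proof of Proposition~\ref{propPendant_iff}: necessity of the three stated inequalities is already contained in earlier parts of the subsection, and sufficiency will be obtained by verifying that every fluid limit reaches the origin in finite time, so that \cite[Theorem~4.2]{DaiFluid} applies. Concretely, necessity of \textsc{Ncond}$_{\ssc}(G)$ is Theorem~\ref{thNcond}, and necessity of \eqref{necessary5cycle}, \eqref{af24} and \eqref{af13} follows from Corollary~\ref{cor:5cycle} and from the fluid-limit computations for nodes~3 and~4 carried out in the text, each combined with Lemma~\ref{lmUnstable}.

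For sufficiency, since $Q(t)\in \mathbb{G}$ at all times, the support of any fluid limit $\barq(\cdot)$ is always an independent set of the 5-cycle, i.e.\ either one of the five singletons $\{i\}$, $i\in\llbracket 1,5\rrbracket$, or one of the five maximal independent pairs $\{1,4\},\{1,5\},\{2,3\},\{2,5\},\{3,4\}$. I would first treat the five singleton initial conditions: for $A=\{1\}$ or $A=\{2\}$, the fluid drift $\lm_1-\lm_2-\lm_3$ (respectively $\lm_2-\lm_1-\lm_4$) is strictly negative by \textsc{Ncond}; while for $A\in\{\{3\},\{4\},\{5\}\}$, the drift produced by Theorem~\ref{thFWLLN} together with the explicit reversible stationary distribution (cf.\ Proposition~\ref{pro:5cycle} and the computations preceding the statement) is strictly negative exactly under \eqref{af24}, \eqref{af13} and \eqref{necessary5cycle}, respectively. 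In all five singleton cases the fluid queue is linear with negative slope and hence hits~$0$ in finite time.

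For a pair support $A=\{i,j\}$, observe that, in the 5-cycle, any independent pair has the remaining three vertices as its neighborhood, so those three queues are forced to be zero whenever $Q_i,Q_j>0$; no averaging is needed and the fluid drifts can be read off directly from the priority rule of Figure~\ref{Fig:5cycle}, yielding
\begin{align*}
A=\{3,4\}: \quad & \dot{\barq}_3=\lm_3-\lm_1,\ \dot{\barq}_4=\lm_4-\lm_2-\lm_5;\\
A=\{1,4\}: \quad & \dot{\barq}_1=\lm_1-\lm_2-\lm_3,\ \dot{\barq}_4=\lm_4-\lm_5;\\
A=\{1,5\}: \quad & \dot{\barq}_1=\lm_1-\lm_2-\lm_3,\ \dot{\barq}_5=\lm_5-\lm_4;\\
A=\{2,3\}: \quad & \dot{\barq}_2=\lm_2-\lm_1-\lm_4,\ \dot{\barq}_3=\lm_3-\lm_5;\\
A=\{2,5\}: \quad & \dot{\barq}_2=\lm_2-\lm_1-\lm_4,\ \dot{\barq}_5=\lm_5-\lm_3.
\end{align*}
In each row, at least one drift is strictly negative by \textsc{Ncond}$_{\ssc}(G)$ applied to the corresponding singleton (respectively $\{4\}$, $\{1\}$, $\{1\}$, $\{2\}$, $\{2\}$). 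The fluid queue with negative drift hits~$0$ in finite time; at that epoch the support of $\barq$ shrinks to a singleton, which has already been handled. A finite induction on the size of $\mathrm{supp}(\barq(t))$ then yields that every fluid limit reaches the origin in finite time, and the argument at the end of the proof of Proposition~\ref{propPendant_iff} shows the origin is absorbing. Dai's theorem then yields positive recurrence of~$Q$.

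I expect the main technical hurdle to be the pasting of the fluid dynamics across the finitely many transition epochs at which the support of $\barq$ shrinks: one has to verify that the fast marginal CTMC driving each new phase is ergodic and that Theorem~\ref{thFWLLN} applies to it, by re-running the coupling argument of Lemma~\ref{lmBndFluid} starting from the new (reduced) support, exactly as was done in the proof of Proposition~\ref{propPendant_iff} for the pendant graph.
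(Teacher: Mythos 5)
Your overall strategy matches the paper's sketch, and I verified that all five of your pair-support drift computations are correct. But there is a genuine gap in the singleton-support case $A=\{4\}$, which the paper's own ``Stability of Node 4'' paragraph also elides. The node-$4$ marginal chain has state $(Q_1,Q_3)\in\mathbb E_2$, and its rates are: $Q_1$ rises at rate $\lm_1\ind_{\{Q_3=0\}}$ and falls at rate $\lm_2+\lm_3$; $Q_3$ rises at rate $\lm_3\ind_{\{Q_1=0\}}$ but falls \emph{only} at rate $\lm_1$, because every class-$5$ arrival is captured by node~$4$ (since $Q_4>0$ and node $5$ prioritizes $4$). Ergodicity of this chain therefore requires $\lm_3<\lm_1$ in addition to $\lm_1<\lm_2+\lm_3$, and \textsc{Ncond} does \emph{not} imply $\lm_3<\lm_1$. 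When $\lm_3\ge\lm_1$, Assumption~\ref{AssErgo} fails, Theorem~\ref{thFWLLN} does not apply to $A=\{4\}$, and the fluid support actually \emph{grows} from $\{4\}$ to $\{3,4\}$ at $t=0^+$; your finite induction on $|\mathrm{supp}(\barq(t))|$ must therefore accommodate support growth as well as shrinkage, which it does not as written.

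More consequentially, even when $\lm_3<\lm_1$ so that the marginal chain is ergodic, the paper's $\af_{(1,3)}=1-\lm_1/(\lm_2+\lm_3)$ is \emph{not} $\pi(Q_1=0)$: $Q_1$ is not an autonomous M/M/1 queue, because its birth rate is $\lm_1\ind_{\{Q_3=0\}}$ rather than $\lm_1$. Solving the detailed-balance equations on $\mathbb E_2$ gives $\pi(i,0)\propto(\lm_1/(\lm_2+\lm_3))^i$ and $\pi(0,j)\propto(\lm_3/\lm_1)^j$, whence $\pi(Q_1=0)=(\lm_2+\lm_3-\lm_1)/\lm_2$. Substituting this into the drift of Theorem~\ref{thFWLLN} yields $\lm_4-\lm_5-\lm_2\pi(Q_1=0)=(\lm_1+\lm_4)-(\lm_2+\lm_3+\lm_5)$, which is already negative under \textsc{Ncond} for $\maI=\{1,4\}$. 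Thus \eqref{af13} is strictly stronger than the condition the fluid analysis actually delivers for node~$4$, and the ``only if'' direction of the proposition fails: for instance $\lambda=(0.28,0.12,0.25,0.2,0.15)$ lies in $\textsc{Ncond}_{\ssc}(G)$, satisfies \eqref{necessary5cycle} and \eqref{af24}, has $\barq_4$-drift equal to $-0.04$ from $\barq(0)\propto\boe_4$, yet violates \eqref{af13}. Deferring to the paper's $\af_{(1,3)}$ imports this error into your proof; carrying out the Theorem-\ref{thFWLLN} computation with the corrected $\pi$ (together with the growing-support analysis above when $\lm_3\ge\lm_1$) would show that \eqref{af13} should be dropped from the statement and that \textsc{Ncond}, \eqref{necessary5cycle} and \eqref{af24} alone characterize stability.
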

Similarly to the pendant graph (see Proposition \ref{prop:nonsufficientpendant}), we can check that the stability region
of the model is strictly included in $\mbox{{\sc Ncond}}_{\ssc}(G)$. Specifically,
\begin{proposition}
\label{prop:nonsufficient5cycle}
We have the strict inclusion
\[\left\{\lambda \mbox{ satisfying (\ref{necessary5cycle}) }\right\}\,\bigcap\, \mbox{{\sc Ncond}}_{\ssc}(G) \subsetneq \mbox{{\sc Ncond}}_{\ssc}(G).\]
\end{proposition}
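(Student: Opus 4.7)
The plan is to mimic the proof of Proposition \ref{prop:nonsufficientpendant} by exhibiting a one-parameter family of arrival-rate vectors that lie in $\textsc{Ncond}_{\ssc}(G)$ but violate condition \eqref{necessary5cycle}. Since the explicit formulas for $a$ and $\tilde\alpha$ are already available in \eqref{a1a2} and \eqref{eq:defalpha5cycle}, this reduces to a routine algebraic verification.

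First, I would focus on the fully symmetric choice $\lambda_1=\lambda_2=\lambda_3=\lambda_4=1$, leaving $\lambda_5\in(0,2)$ as a free parameter. The independent sets of the $5$-cycle are the singletons and the five pairs of non-adjacent vertices $\{1,4\},\{1,5\},\{2,3\},\{2,5\},\{3,4\}$. Enumerating the corresponding inequalities $\bar\lambda_{\maI}<\bar\lambda_{\maE(\maI)}$ shows that $\textsc{Ncond}_{\ssc}(G)$ holds as soon as $0<\lambda_5<2$: each inequality reduces either to a strict inequality between constants (e.g., $1<2$ for the singletons $\{1\},\{2\},\{3\},\{4\}$), to a condition of the form $\lambda_5<2$ (for $\{5\},\{1,5\},\{2,5\}$), or to $2<2+\lambda_5$ (for $\{1,4\},\{2,3\},\{3,4\}$), all of which are satisfied.

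Second, I would plug these values into \eqref{a1a2} and \eqref{eq:defalpha5cycle}, obtaining
\[
a \;=\; \frac{1\cdot 2}{1}+\frac{1\cdot 2}{1} \;=\; 4,\qquad \tilde\alpha \;=\; \bigl[\,1+1+1\,\bigr]^{-1} \;=\; \tfrac{1}{3},
\]
so that $a\tilde\alpha=4/3$. Choosing any $\lambda_5\in[4/3,2)$, for instance $\lambda_5=3/2$, yields a vector $\lambda\in\textsc{Ncond}_{\ssc}(G)$ that violates \eqref{necessary5cycle}, proving the strict inclusion.

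The verification is essentially mechanical; the only things to watch for are that none of the denominators appearing in $a$ and $\tilde\alpha$ vanishes (they all equal $1$ in the symmetric setting) and that the chosen family leaves a non-degenerate gap $\lambda_5\in[a\tilde\alpha,\bar\lambda_{\maE(5)})$, here of width $2-4/3=2/3$. I therefore do not foresee any genuine obstacle, and a single explicit counterexample such as $\lambda=(1,1,1,1,3/2)$ suffices to establish the proposition.
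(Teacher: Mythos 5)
Your proof is correct and takes essentially the same approach as the paper: exhibit an explicit arrival-rate vector in $\textsc{Ncond}_{\ssc}(G)$ violating \eqref{necessary5cycle}. The paper uses a normalized one-parameter family $\lambda_1=\lambda_2=\epsilon/2$, $\lambda_3=\lambda_4=1/4-\epsilon/8$, $\lambda_5=1/2-3\epsilon/4$ with $\epsilon\in(0,2/9]$, whereas your symmetric choice $\lambda_1=\cdots=\lambda_4=1$, $\lambda_5\in[4/3,2)$ is arguably cleaner since $a=4$ and $\tilde\alpha=1/3$ come out immediately. One tiny slip: the singleton conditions for $\{3\}$ and $\{4\}$ read $1<1+\lambda_5$ (since node $5$ is a neighbor of both), not $1<2$, but this is of course still satisfied for $\lambda_5>0$ and does not affect the conclusion.
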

\begin{proof}
Fix $\epsilon \in \left(0,{2/ 9}\right]$ and set
\[\left\{\begin{array}{ll}
\lambda_1&=\lambda_2={\epsilon / 2};\\
\lambda_3&=\lambda_4={1/ 4}-{\epsilon /8};\\
\lambda_5&={1/2}-{3\epsilon /4}.
\end{array}\right.\]
Clearly, $\lambda \in \mbox{{\sc Ncond}}_{\ssc}(G)$, but (\ref{necessary5cycle}) does not hold since
\begin{equation*}
\lambda_5 - a\tilde\alpha ={1\over 2}-{3\epsilon \over 4}-\left({1\over 2} -
{\epsilon \over 4}\right){{1/ 4}+{3\epsilon / 8} \over {1/ 4}+{7\epsilon / 8}}= {{\epsilon / 8}\left(1-{9\epsilon / 2}\right) \over
{1/ 4}+{7\epsilon / 8}}\ge 0.
\end{equation*}\qedhere
\end{proof}

\section{Proof of The Main Result} \label{secProofMain}

In this section we prove Theorem \ref{thm:main}, after introducing several key auxiliary results.

\subsection{Graphs Induced By Separable Graphs} \label{secInduced}

We start by proving the connection between separable graphs and the graphs investigated in Section
\ref{sec:Examples}.

\begin{lemma} \label{Lm:BipSepCycles}
For any connected graph $G$,
\begin{itemize}
\item[(i)] If $G$ is non-bipartite and non-separable, then it induces a pendant graph or an odd cycle of size $5$ or more. \label{Item1}
\item[(ii)] If $G$ is separable, then it does not induce a pendant graph, nor any odd cycle of size $5$ or more.
\end{itemize}
\end{lemma}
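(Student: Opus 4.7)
The plan is to prove the two parts separately, with part (ii) essentially a hereditary argument and part (i) requiring a careful case analysis.

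For part (ii), I would first reformulate separability as a forbidden-subgraph condition: $G$ is separable if and only if $G$ has no induced subgraph isomorphic to $K_2+K_1$ (an edge plus an isolated vertex). This is immediate from Definition \ref{def:sep}: $G$ separable means that the relation ``$\pv$'' is transitive on $\maV$, i.e.\ for all $x,y,z$, $xy,yz\notin\maE$ implies $xz\notin\maE$, and the failure of this condition is precisely the existence of an induced $K_2+K_1$. Since having an induced $K_2+K_1$ is preserved under taking induced subgraphs, a graph inducing a non-separable graph is itself non-separable. It then suffices to exhibit an induced $K_2+K_1$ in each of the pendant and every $C_{2k+1}$ with $k\ge 2$: in the pendant of Figure \ref{Fig:pendant} the triple $\{1,2,4\}$ works (the unique edge among them is $1\v 2$, with $4$ non-adjacent to both), while in $C_{2k+1}$ with vertices $v_1,\dots,v_{2k+1}$ the triple $\{v_1,v_2,v_4\}$ works, since for $k\ge 2$ the cyclic distance from $v_4$ to each of $v_1,v_2$ exceeds one.

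For part (i), let $G$ be connected, non-bipartite, and non-separable. Since $G$ is non-bipartite it contains an odd cycle; let $C$ be one of minimum length $2k+1$. I would first argue that $C$ is induced by showing a chord would produce a strictly shorter odd cycle: a chord splits $C$ into two sub-paths of lengths $\ell_1,\ell_2\ge 2$ with $\ell_1+\ell_2=2k+1$, and adjoining the chord yields two cycles of lengths $\ell_1+1$ and $\ell_2+1$ whose sum is $2k+3$, so exactly one is odd and, since $2\le \ell_i\le 2k-1$, that odd cycle has length between $3$ and $2k-1$, contradicting minimality. If $2k+1\ge 5$, then $C$ is the desired induced odd cycle.

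Otherwise $2k+1=3$ and $G$ contains an induced triangle $T=\{a,b,c\}$, and the task is to exhibit an induced pendant. Assume for contradiction that none exists: then every $d\in\maV\setminus T$ has $|\maE(d)\cap T|\in\{0,2,3\}$, since otherwise $T\cup\{d\}$ would induce a pendant. Partition $\maV\setminus T$ accordingly into $V_0,V_3$ and $V_2=V_2^{ab}\cup V_2^{ac}\cup V_2^{bc}$. Iterating the no-pendant assumption on the derived triangles $\{a,b,v\}$ with $v\in V_2^{ab}$ (and their symmetric analogues), I would derive that $V_0$ has no neighbor in $V_2\cup V_3$, forcing $V_0=\emptyset$ by connectedness, and that vertices from distinct $V_2^{xy}$-classes are fully adjacent. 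Now invoke non-separability to fix an induced $K_2+K_1$ on $\{p,q,r\}$ with $pq\in\maE$ and $pr,qr\notin\maE$: a case analysis on the positions of $p,q,r$ within the partition, together with the structural constraints above, produces a common neighbor $t$ of $p,q$ that is also adjacent to $r$, so that $\{p,q,t,r\}$ induces a pendant (triangle $\{p,q,t\}$ with pendant edge $rt$), the desired contradiction; when $pq$ fails to lie on a triangle through a vertex adjacent to $r$, one instead builds the pendant from a triangle $\{a,b,r'\}$ through a neighbor $r'$ of $r$ in $V_2\cup V_3$, using $V_0=\emptyset$.

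The main obstacle is this last case analysis in the triangle case: for an arbitrary $K_2+K_1$ witness $\{p,q,r\}$ one must always produce a triangle together with a vertex attached as a pendant, which requires patiently enumerating the relative positions of $p,q,r$ within the partition $T\cup V_2\cup V_3$ and verifying that a suitable third vertex is available. The clean structural consequences of the no-pendant hypothesis, in particular $V_0=\emptyset$ and the forced cross-adjacencies between the three $V_2^{xy}$-classes, are the essential tools that make each sub-case tractable.
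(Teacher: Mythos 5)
Your proposal is correct and follows a genuinely different route from the paper's. The two approaches differ most sharply in the triangle case of part~(i). The paper works bottom-up: it fixes a triangle, then repeatedly tries to extend the current separable induced subgraph by a single vertex $i$, showing by case analysis (cases (a)--(d)) that each extension step either stays separable or forces an induced pendant; since $G$ itself is non-separable, the process must eventually produce a pendant. Your approach instead fixes the induced triangle $T=\{a,b,c\}$ once and for all, partitions $\maV \setminus T$ by the number of neighbors in $T$ (forcing $V_1=\emptyset$ and, via connectedness, $V_0=\emptyset$), and then locates an induced $K_2 + K_1$ witness $\{p,q,r\}$ guaranteed by non-separability. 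The crucial simplification you could exploit (and in fact your ``cross-adjacency'' lemma is not needed for this) is that the non-neighbors of each vertex type are confined to a single block of the partition: $a$'s non-neighbors are exactly $V_{bc}$, $b$'s are $V_{ac}$, $c$'s are $V_{ab}$, a vertex $v\in V_{ab}$ has non-neighbors only in $\{c\}\cup V_{ab}$, and a vertex in $V_3$ has non-neighbors only in $V_3$. Consequently $p$, $q$ and $r$ must always lie in the same block $V_{xy}$ (or $V_3$, or $V_{xy}\cup\{z\}$ for the third vertex $z$ of $T$), and in every such configuration one vertex of $T$ is adjacent to all of $p,q,r$, so the four vertices induce a pendant. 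This renders vacuous the last subcase you flag (``when $pq$ fails to lie on a triangle through a vertex adjacent to $r$''); a common neighbor in $T$ always exists. Your reformulation of separability as absence of an induced $K_2+K_1$ also streamlines part~(ii) (the paper notes the equivalence-relation fact in passing but proves part~(ii) by a direct parity argument on the independent-set partition), and your shortest-odd-cycle argument for part~(i) is tighter than the paper's iterative chord-removal since minimality immediately yields an induced cycle in one step.
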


\begin{proof}
{\bf Proof of $(i)$. ~} Let $G$ be a non-bipartite and non-separable graph.
It is a classical result of graph theory (see {\em e.g.} Theorem 13.2.1 of \cite{LPV})
that $G$ contains an odd cycle $\breve G$ as a subgraph.
We consider two cases separately: $\breve G$ is a triangle, or $\breve G$ is of size $5$ or more.

$\underline{\mbox{{\bf Case 1}}}$. Assume that $\breve G$ is a triangle.
It is not possible that $\breve G=G$, because then $G$ would be a complete graph, and thereby a separable graph.
Thus $G$ has other nodes, in addition to the three ones of $\breve G$.

We prove the claim by induction. Assume that G induces a graph $\bar
G$ that consists of a separable graph $\check G$ of order $q \ge 3$
(the base case of the induction being $\check G=\breve G$),
connected to another node, which we denote as node $i$. We have the
following four alternatives, which are depicted in an example in
Figure \ref{Fig:Completion}.
\begin{enumerate}
\item[(a)] There may exist two independent sets $\maI_1$ and $\maI_2$ of $\check G$, and two nodes $i_1 \in \maI_{1}$ and $i_2 \in \maI_{2}$, such that $i \pv i_1$ and $i \pv i_2$.
In that case, as $\check G$ is of order $3$ or more, there exist an
independent set $\maI_3$, which is different from $\maI_1$ and
$\maI_2$, and a node $i_3 \in \maI_{3}$, such that $i \v i_3$. Then
$\check G$ induces the pendant graph $i \v i_3 \v i_1 \v i_2$, where
$i$ is only adjacent to $i_3$ and $(i_1,i_2,i_3)$ form a triangle.
In particular, $G$ induces a pendant graph.
\item[(b)] There may exist a maximal independent set $\maI_1$ of $\check G$ such that $i \v k$
for any $k  \in \check G\setminus \maI_{1}$, and $i \pv i_1$ for any
$i_1 \in \maI_{1}$. Then $\bar G$ is again a separable graph of
order $q$, having the same maximal independent sets as $\check G$,
except that $\maI_{1}$ is replaced by $\maI_{1}\cup \{i\}$.
\item[(c)] There may exist a maximal independent set $\maI_1$ of $\check G$ such that $i \v k$ for any $k  \in \check G\setminus \maI_{1}$, and $i \pv i_1$
for some (not necessarily unique) $i_1 \in \maI_1$ and $i \v j_1$ for some (again, not necessarily unique) $j_1 \in \maI_1$. In that case, $\bar G$
induces the pendant graph $i_1 \v i_2 \v j_1 \v i$, where
$i_1$ is only adjacent to $i_2$ and $(i_2,j_1,i)$ form a triangle.
\item[(d)] We may have $i \v j$ for any node $j \in \check G$. Then $\bar G$ is a separable graph of order $q+1$ whose maximal independent sets are those
of $\check G$, plus the independent set $\maI_{q+1}:=\{i\}$.
\end{enumerate}
To summarize, in cases (a) and (c), $\bar G$ and therefore $G$,
induce a pendant graph. In cases (b) and (d) $\bar G$ is separable,
and we cannot have $G=\bar G$. Thus there exists another node in $G$
that is connected to $\bar G$, and we can re-iterate the same
procedure for $\check G \equiv \bar G$. Eventually, some $\bar G$
induced in $G$ will exhibit either case (a) or (c), otherwise $G$
would be separable. This concludes the proof in this first case.

\begin{figure}[h!]
\begin{center}
\begin{tikzpicture}
\draw[-,very thin] (1,1) -- (2,-0.2);
\draw[-,very thin] (1,1) -- (3,0);
\draw[-,very thin] (1,1) -- (2,1.2);
\draw[-,very thin] (1,1) -- (3,1);
\draw[-,very thick] (2,1.2) -- (1,0);
\draw[-,very thin] (2,1.2) -- (3,0);
\draw[-,very thick] (2,1.2) -- (3,1);
\draw[-,very thick] (3,1) -- (1,0);
\draw[-,very thin] (3,1) -- (2,-0.2);
\draw[-,very thin] (1,0) -- (2,-0.2);
\draw[-,very thin] (1,0) -- (3,0);
\draw[-,very thin] (2,-0.2) -- (3,0);
\draw[-,very thin] (0,0.5) -- (1,1);
\draw[-,very thick]  (0,0.5) -- (3,1);
\draw[-,very thin] (0,0.5) -- (2,-0.2);
\draw[-,very thin] (0,0.5) -- (3,0);
\fill (0,0.5) circle (2pt) node[left] {\small{$i$}} ;
\fill (1,1) circle (2pt) node[above] {\small{1}} ;
\fill (2,1.2) circle (2pt) node[above] {\small{$2:=i_2$}} ;
\fill (3,1) circle (2pt) node[above right] {\small{$3:=i_3$}} ;
\fill (1,0) circle (2pt) node[below] {\small{$4:=i_1$}} ;
\fill (2,-0.2) circle (2pt) node[below] {\small{5}} ;
\fill (3,0) circle (2pt) node[below] {\small{6}} ;
\fill (1.5,-1) node[] {\scriptsize{case (a)}} ;
\draw[-,very thin] (7,1) -- (8,-0.2);
\draw[-,very thin] (7,1) -- (9,0);
\draw[-,very thin] (7,1) -- (8,1.2);
\draw[-,very thin] (7,1) -- (9,1);
\draw[-,very thin] (8,1.2) -- (9,0);
\draw[-,very thin] (8,1.2) -- (9,1);
\draw[-,very thin] (9,1) -- (7,0);
\draw[-,very thin] (9,1) -- (8,-0.2);
\draw[-,very thin] (7,0) -- (8,-0.2);
\draw[-,very thin] (7,0) -- (9,0);
\draw[-,very thin] (8,-0.2) -- (9,0);
\draw[-] (6,0.5) -- (8,1.2);
\draw[-,very thin]  (6,0.5) -- (9,1);
\draw[-,very thin] (6,0.5) -- (8,-0.2);
\draw[-,very thin] (6,0.5) -- (9,0);
\fill (6,0.5) circle (2pt) node[left] {\small{$i$}} ;
\fill (7,1) circle (2pt) node[above] {\small{1}} ;
\fill (8,1.2) circle (2pt) node[above right] {\small{2}} ;
\fill (9,1) circle (2pt) node[above] {\small{3}} ;
\fill (7,0) circle (2pt) node[below] {\small{4}} ;
\fill (8,-0.2) circle (2pt) node[below] {\small{5}} ;
\fill (9,0) circle (2pt) node[below] {\small{6}} ;
\fill (7.5,-1) node[] {\scriptsize{case (b): $\maI_1=\{1,4,i\}$}} ;
\draw[-,very thin] (1,-2) -- (2,-3.2);
\draw[-,very thin] (1,-2) -- (3,-3);
\draw[-,very thick] (1,-2) -- (2,-1.8);
\draw[-,very thin] (1,-2) -- (3,-2);
\draw[-,very thick] (2,-1.8) -- (1,-3);
\draw[-,very thin] (2,-1.8) -- (3,-3);
\draw[-,very thin] (2,-1.8) -- (3,-2);
\draw[-,very thin] (3,-2) -- (1,-3);
\draw[-,very thin] (3,-2) -- (2,-3.2);
\draw[-,very thin] (1,-3) -- (2,-3.2);
\draw[-,very thin] (1,-3) -- (3,-3);
\draw[-,very thin] (2,-3.2) -- (3,-3);
\draw[-,very thick] (0,-2.5) -- (1,-3);
\draw[-, very thick] (0,-2.5) -- (2,-1.8);
\draw[-,very thin]  (0,-2.5) -- (3,-2);
\draw[-,very thin] (0,-2.5) -- (2,-3.2);
\draw[-,very thin] (0,-2.5) -- (3,-3);
\fill (0,-2.5) circle (2pt) node[left] {\small{$i$}} ;
\fill (1,-2) circle (2pt) node[above left] {\small{$1:=i_1$}} ;
\fill (2,-1.8) circle (2pt) node[above] {\small{$2:=i_2$}} ;
\fill (3,-2) circle (2pt) node[above] {\small{3}} ;
\fill (1,-3) circle (2pt) node[below] {\small{$4:=j_1$}} ;
\fill (2,-3.2) circle (2pt) node[below] {\small{5}} ;
\fill (3,-3) circle (2pt) node[below] {\small{6}} ;
\fill (1.5,-4) node[] {\scriptsize{case (c)}} ;
\draw[-,very thin] (7,-2) -- (8,-3.2);
\draw[-,very thin] (7,-2) -- (9,-3);
\draw[-,very thin] (7,-2) -- (8,-1.8);
\draw[-,very thin] (7,-2) -- (9,-2);
\draw[-,very thin] (8,-1.8) -- (7,-3);
\draw[-,very thin] (8,-1.8) -- (9,-3);
\draw[-,very thin] (8,-1.8) -- (9,-2);
\draw[-,very thin] (9,-2) -- (7,-3);
\draw[-,very thin] (9,-2) -- (8,-3.2);
\draw[-,very thin] (7,-3) -- (8,-3.2);
\draw[-,very thin] (7,-3) -- (9,-3);
\draw[-,very thin] (8,-3.2) -- (9,-3);
\draw[-,very thin] (6,-2.5) -- (7,-2);
\draw[-,very thin] (6,-2.5) -- (7,-3);
\draw[-, very thin] (6,-2.5) -- (8,-1.8);
\draw[-,very thin]  (6,-2.5) -- (9,-2);
\draw[-,very thin] (6,-2.5) -- (8,-3.2);
\draw[-,very thin] (6,-2.5) -- (9,-3);
\fill (6,-2.5) circle (2pt) node[left] {\small{$i$}} ;
\fill (7,-2) circle (2pt) node[above] {\small{1}} ;
\fill (8,-1.8) circle (2pt) node[above right] {\small{2}} ;
\fill (9,-2) circle (2pt) node[above] {\small{3}} ;
\fill (7,-3) circle (2pt) node[below] {\small{4}} ;
\fill (8,-3.2) circle (2pt) node[below] {\small{5}} ;
\fill (9,-3) circle (2pt) node[below] {\small{6}} ;
\fill (7.5,-4) node[] {\scriptsize{case (d): $\maI_4=\{i\}$}} ;
\end{tikzpicture}
\caption[smallcaption]{Completing a separable graph with one node always yields to a separable graph ((b) and (d)),
or a non-separable graph inducing a pendant graph ((a) and (c)).}
\label{Fig:Completion}
\end{center}
\end{figure}
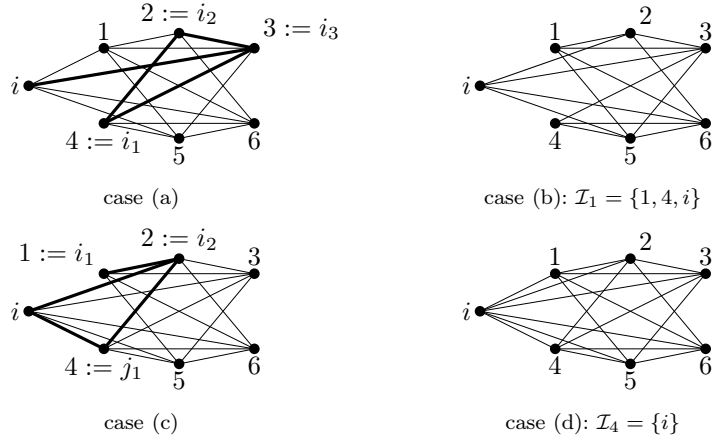

$\underline{\mbox{{\bf Case 2}}}$. Now assume that $\breve G$ is of length $5$ or more.
Assume that $G$ does not induce $\breve G$.
Therefore $G$ admits an edge $(i,j)$, where $i$ and $j$ are two nodes of $\breve G$. But drawing an edge inside an odd cycle always creates an odd cycle and an even cycle (see Figure \ref{Fig:cutOddcycle}).

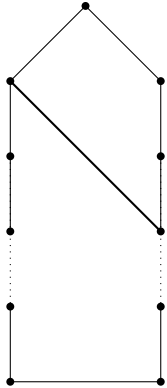
\begin{figure}[h!]
\begin{center}
\begin{tikzpicture}
\fill (1,2) circle (1.5pt);
\fill (0,1) circle (1.5pt);
\draw [-,thick] (0,1) -- (2,-1);
\fill (2,1) circle (1.5pt);
\fill (0,0) circle (1.5pt);
\fill (2,0) circle (1.5pt);
\fill (0,-1) circle (1.5pt);
\fill (2,-1) circle (1.5pt);
\draw[-, dotted] (0,0) -- (0,-2);
\fill (0,-2) circle (1.5pt);
\draw[-] (0,-2) -- (0,-3);
\fill (0,-3) circle (1.5pt);
\draw[-, dotted] (2,0) -- (2,-2);
\fill (2,-2) circle (1.5pt);
\draw[-] (2,-2) -- (2,-3);
\fill (2,-3) circle (1.5pt);
\draw[-] (1,2) -- (2,1);
\draw[-] (0,0) -- (0,1);
\draw[-] (2,0) -- (2,1);
\draw[-] (0,1) -- (1,2);
\draw[-] (0,0) -- (0,-1);
\draw[-] (2,-1) -- (2,0);
\draw[-] (0,-3) -- (2,-3);
\end{tikzpicture}
\caption[smallcaption]{Drawing an edge between two nodes of an odd cycle creates an odd cycle.}\label{Fig:cutOddcycle}
\end{center}
\end{figure}
By induction on the added edges, $\breve G$ finally induces an odd cycle of length $2p+1 \ge 3$. If $2p+1 \ge 5$, we are done. If $2p+1=3$, we are back in case 1.

\noindent {\bf Proof of $(ii)$. ~}
We prove the result by contradiction.
Let $G$ be a separable graph of order $q$ and let $\maI_1,...,\maI_q$ be its disjoint maximal independent sets.
Then
for any two nodes $i,j$ of  $G$, the relation $i \pv j$ implies that $i$ and $j$ belong to the same independent set $\maI_k$,
for some $k \in \llbracket 1,q \rrbracket$, but to no other independent set.

First take the contradictory assumption that $G$ induces a pendant
graph $\breve G$, and label the nodes of $\breve G$ as in Figure
\ref{Fig:pendant}. Suppose that $3 \in \maI_i$. All the neighbors of
$3$ in $\breve G$ (and thus in $G$) cannot be in $\maI_i$, so that,
there exist $j,k,\ell \in \llbracket 1,q \rrbracket \setminus
\{i\}$, such that $4\in \maI_j,\,1\in\maI_k$ and $2\in\maI_{\ell}$.
As $4 \pv 1$, we have $j=k$. Similarly, we have $j=\ell$, and thus
$k=\ell$. But, since $1 \v 2$, nodes $1$ and $2$ do not belong to
the same independent set - a contradiction. Therefore, $G$ cannot
induce a pendant graph.

We next assume that $G$ induces the $2p+1$-cycle $\breve G$, with $p \ge 2$ (so that $q \le 2p+1$).
For simplicity, label the nodes of $\breve G$ as 1,2,...,$2p+1$, in a way that
$$1 \,\v\, 2 \,\v\, 3 \,\v\, ... \,\v\, 2p \,\v\, (2p+1)\, \v \,1.$$
For all $j \in \llbracket 1,2p+1 \rrbracket$, let $i_j$ be such that $j \in \maI_{i_j}$. As $1 \v 2,\,2 \v 3$ and $3 \pv 1$, we have $i_1 \ne i_2$ and $i_1 = i_3$.
In general, we see that any odd node $k$ is in $\maI_{i_1}$, whereas any even node $\ell$ is in $\maI_{i_2}$. It follows that nodes $1$ and
$2p+1$ both belong to $\maI_{i_1}$, but since $(2p+1) \,\v\, 1$ we again arrive at a contradiction, implying that $G$ cannot induce an odd cycle of size 5 or more.
\end{proof}

\subsection{Non-chaoticity of Matching Queues} \label{subsec:nonchaotic}
As was already discussed, Theorem \ref{thm:main} is proved by
employing Lemma \ref{Lm:BipSepCycles}, after showing that the
pendant graph and the $5$-cycle graph can be unstable, even if {\sc
Ncond} holds. However, if $\breve G = (\breve \maV, \breve \maE)$ is
induced in $G = (\maV, \maE)$, and if we assume that $i_0$ is an
unstable node of a matching queue on $\breve G$ when it is
considered in isolation, then it might not be unstable in a matching
queue on $G$. (In our case, $\breve G$ is either the pendant graph
of the 5-cycle.) Therefore, to construct an unstable matching queue
on $G$ itself, we must show that the effect of the arrivals to $\maV
\setminus \breve \maV$ can be controlled so that node $i_0$ remains
unstable in the matching queue on $G$. If we think of the effect of
the arrivals of items of $\maV \setminus \breve \maV$, as a
perturbation of the number of items in the matching queue on $\breve
G$ when considered in isolation, then we must show that
perturbations of the matching queue on $\breve G$ do not get
amplified within the matching queue on $G$. We refer to such a
property as {\em non-chaoticity} of matching queues. See Lemma
\ref{LemInclusion2} below for the precise statement.

To prove this non-chaoticity property, we first prove an auxiliary result.
Let $\| . \|$ denote the $1$-norm on $\R^p$, $p \in \ZZ_{++}$,
$$\| x \| =\sum_{i=1}^p |x_i|,\quad x\in \R^p.$$
We say that two matching queues $\left(G,\lambda,\Phi\right)_{\ssc}$ and $\left(G^{\prime},\lambda^{\prime},\Phi^{\prime}\right)_{\ssc}$
such that $|\maV|=|\maV^\prime|$ and $\lambda=\lambda^\prime$
have {\em the same input}, if both are constructed
using the same $|\maV|$ Poisson processes (same sample paths of the arrival process).

For a given matching queue
$\left(G,\lambda,\Phi\right)_{\ssc}$ having a queue process defined on a state space $\mathbb G$, let
$Q^z=\left\{Q^z(t)\,:\,t\ge 0\right\}$ denote the queue process
when the initial condition is $Q^z(0)=z,$ $z \in \mathbb G$.

\begin{lemma}
\label{lemma:Inclusion1} For any matching queue
$\left(G,\lambda,\Phi\right)_{\ssc}$ and any initial conditions
$x,y$ in $\mathbb G$, if the two systems have the same input we have
that
$$\| Q^x(t)-Q^y(t) \|\, \le \, \| x-y \|,\,t\ge 0.$$
\end{lemma}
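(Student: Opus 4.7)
The idea is a direct coupling argument. Because both systems are driven by the same $|\maV|$ Poisson arrival processes, the difference $\delta(t) := Q^x(t) - Q^y(t)$ is piecewise constant and changes value only at the arrival epochs. It therefore suffices to show that at every arrival epoch the norm $\|\delta(t)\| = \sum_{i \in \maV} |\delta_i(t)|$ is non-increasing, since then $\|Q^x(t) - Q^y(t)\| = \|\delta(t)\| \le \|\delta(0)\| = \|x - y\|$ for all $t \ge 0$, which is the desired inequality.

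At an arrival of class $i$, each system independently reacts by either queueing the item (raising $Q_i$ by one) or matching it with some neighbor $j \in \maE(i)$ (lowering $Q_j$ by one), with the choice dictated by the policy $\Phi$ applied to that system's state at $t-$. Write $m^x, m^y \in \{\emptyset\} \cup \maE(i)$ for the two reactions. Up to symmetry, three cases arise: $(a)$ $m^x = m^y$; $(b)$ $m^x = \emptyset$ and $m^y = j$; $(c)$ $m^x = j \ne k = m^y$. Case $(a)$ is trivial since the two systems react identically. In case $(b)$, the state-space constraint (\ref{stateSpace}) yields $Q^y_i(t-) = 0$ (as $Q^y_j(t-) > 0$ and $i \in \maE(j)$), so $\delta_i(t-) = Q^x_i(t-) \ge 0$; likewise $m^x = \emptyset$ gives $Q^x_j(t-) = 0$, hence $\delta_j(t-) \le -1$. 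After the arrival, $\delta_i$ and $\delta_j$ each increase by $1$, so $|\delta_i|$ goes up by $1$ while $|\delta_j|$ goes down by $1$, for a zero net change in $\|\delta\|$.

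Case $(c)$ is the delicate one. For a priority policy, we may assume without loss of generality that $j$ has strictly higher priority than $k$ for class-$i$ arrivals; since system $y$ matched with $k$ rather than $j$, we must have $Q^y_j(t-) = 0$, yielding $\delta_j(t-) = Q^x_j(t-) \ge 1$. After the arrival, $|\delta_j|$ decreases by $1$ while $|\delta_k|$ can grow by at most $1$ (and in fact decreases if $j$ and $k$ are themselves neighbors in $G$, by a further invocation of (\ref{stateSpace})), so the net change in $\|\delta\|$ is again non-positive. Combining the three cases completes the proof. The main obstacle lies precisely in case $(c)$, where a naive sign analysis might suggest that both $|\delta_j|$ and $|\delta_k|$ could grow simultaneously; the argument precluding this rests jointly on the admissible (priority) structure of $\Phi$ and the orthogonality relations $Q_\ell Q_m = 0$ for $\ell \in \maE(m)$ that characterize the state space $\mathbb G$.
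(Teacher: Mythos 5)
Your proof is correct and follows essentially the same route as the paper: you reduce to arrival epochs, argue by induction (equivalently, tracking the piecewise-constant difference $\delta$), and perform a case analysis on the two systems' reactions, using the priority structure plus the orthogonality constraint $Q_\ell Q_m = 0$ for neighbors $\ell, m$ to force the favorable sign relations on $\delta_j$ and $\delta_k$. The only cosmetic difference is that the paper carries out explicit triangle-inequality computations on the $1$-norm while you track the signs of the relevant coordinates of $\delta$ directly; both are valid and the WLOG in your case (c) is justified by symmetry in $x$ and $y$.
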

We remark that the Lipschitz-continuity property stated above
follows (as will become clear in the proof) from a specific property
of the DTMC embedded in arrival-time epoches, known as {\em
non-expensiveness} in the literature on stochastic recursions; see,
e.g., Section 2.11 in \cite{BacBre02}.

\begin{proof}[Proof of Lemma \ref{lemma:Inclusion1}]
Let $T_1 < T_2 < ...$ be the arrival times of elements to the system. With some abuse of notation, denote for the
time being, $Q^x(0)=x,\,Q^y(0)=y$, and for all $n\ge 1$, $Q^x(n):=Q^x(T_n)$ and $Q^y(n):=Q^y(T_n)$.
Since both processes $Q^x$ and $Q^y$ are constant between arrival times, it suffices to show the result at any time $T_n$.
We reason by induction. Let $n\in\ZZ_{+}$, and assume that $$\| Q^x(n)-Q^y(n) \| \le \| x-y \|.$$
Let $j$ be the class of the item drawn at time $T_n$. We have the following alternatives:
\begin{enumerate}
\item Assume that the new item is matched in both systems, with an item of the same class $k \in \maE(j)$.
Then in both cases the $k$th coordinate decreases by one and
we have $\| Q^x(n+1)-Q^y(n+1) \|= \| Q^x(n)-Q^y(n) \|.$

\item Assume that the new item is matched with $k^x$ (respectively, $k^y$) in the system initiated by $x$ (respectively, by $y$),
where $k^x \ne k^y$. Then, by the definition of a priority policy, we must have that
$$\Big((Q^x_{k^x}(n)>0)\mbox{ and }(Q^y_{k^x}(n)=0)\Big) \text{ or }
\Big((Q^x_{k^y}(n)=0)\mbox{ and }(Q^y_{k^y}(n)>0)\Big)$$
(or both), since otherwise, the new arrival of class $j$ would be matched with the same item in both systems.

Assume that we are in the first case, the other one is symmetric.
We have
\bes
\bsplit
& \parallel Q^x(n+1)-Q^y(n+1) \parallel \\
& = \| Q^x(n)-Q^y(n) \| - |Q^x_{k^x}(n)-Q^y_{k^x}(n)| + |(Q^x_{k^x}(n)-1)-Q^y_{k^x}(n)| \\
& \quad - |Q^x_{k^y}(n)-Q^y_{k^y}(n)| + |(Q^y_{k^y}(n)-1)-Q^x_{k^y}(n)| \\
& \le \| Q^x(n)-Q^y(n) \| - Q^x_{k^x}(n) + (Q^x_{k^x}(n)-1) - |Q^x_{k^y}(n)-Q^y_{k^y}(n)| \\
& \quad + |Q^y_{k^y}(n)-Q^x_{k^y}(n)| + 1 \\
& = \,\| Q^x(n)-Q^y(n) \|.
\end{split}
\ees
\item Assume that the new arrival to class $j$ is matched in the system initiated by $x$, say, with $k^x\in \maE(j)$,
but not in the one initiated by $y$ (the other way around is symmetric). Then we must have $Q^y_{k^x}(n)=0$
and in turn,
\bes
\begin{split}
& \| Q^x(n+1)-Q^y(n+1) \| \\
& = \| Q^x(n)-Q^y(n) \| - |Q^x_{k^x}(n)-Q^y_{k^x}(n)| + |(Q^x_{k^x}(n)-1)-Q^y_{k^x}(n)| \\
& \quad - |Q^x_{j}(n)-Q^y_{j}(n)| + |(Q^y_{j}(n)+1)-Q^x_{j}(n)| \\
& \le \| Q^x(n)-Q^y(n) \| - Q^x_{k^x}(n) + (Q^x_{k^x}(n)-1) - |Q^y_{j}(n)-Q^x_{j}(n)| \\
& \quad + |Q^y_{j}(n)-Q^x_{j}(n)| + 1 \\
& = \| Q^x(n)-Q^y(n) \|.
\end{split}
\ees
\item Finally, assume that $Q^x_k(n)=Q^y_k(n)=0$ for any $k\in
\maE(j)$
i.e., the incoming item is not matched upon
arrival in both systems. Then the $j$-th coordinate increases by one
in both cases, so $\| Q^x(n+1)-Q^y(n+1) \|=\| Q^x(n)-Q^y(n) \|.$
\hfill \qedhere
\end{enumerate}
\end{proof}

For a connected graph $G=(\maV,\maE)$ and
$\breve{\maV}\cup\hat{\maV}$ a partition of $\maV$, we denote by
$\breve G$ and $\hat G$ the graphs induced, respectively, by
$\breve{\maV}$ and $\hat{\maV}$ in $G$. Then, the {\em disconnected}
graph $\tilde G$ corresponding to the partition
$\breve{\maV}\cup\hat{\maV}$ is the graph $\tilde G =
\left(\maV,\tilde{\maE}\right)$ such that
$\tilde{\maE}=\maE\setminus \left(\left(\breve{\maV}\times
\hat{\maV}\right) \cup \left(\hat{\maV}\times
\breve{\maV}\right)\right)$. In other words, the graph $\tilde G$ is
obtained from $G$ by erasing the edges between elements of
$\breve{\maV}$ and $\hat{\maV}$; an example is depicted in Figure
\ref{Fig:graphtilde}.

\begin{figure}[h!]
\begin{center}
\begin{tikzpicture}
\draw[-,very thin] (1,1) -- (2,0); \draw[-,very thin] (1,1) --
(3,0); \draw[-,very thin] (1,1) -- (4,0); \draw[-,very thick] (2,2)
-- (2,0); \draw[-,very thick] (2,2) -- (3,0); \draw[-,very thick]
(3,2) -- (3,0); \draw[-,very thin] (4.7,1.5) -- (1,1); \draw[-,very
thin] (4.7,1.5) -- (2,0); \draw[-,very thin] (3,2) -- (4,0);
\fill (1,1) circle (2pt); \fill (2,2) circle (2pt); \fill (3,2)
circle (2pt); \fill (2,0) circle (2pt); \fill (3,0) circle (2pt);
\fill (4,0) circle (2pt); \fill (4.7,1.5) circle (2pt);
%
%
\draw[-,very thin] (7,1) -- (10,0); \draw[-,very thick] (8,2) --
(8,0); \draw[-,very thick] (8,2) -- (9,0); \draw[-,very thick] (9,2)
-- (9,0);
\draw[-,very thin] (10.7,1.5) -- (7,1);
%
\fill (7,1) circle (2pt); \fill (8,2) circle (2pt); \fill (9,2)
circle (2pt); \fill (8,0) circle (2pt); \fill (9,0) circle (2pt);
\fill (10,0) circle (2pt); \fill (10.7,1.5) circle (2pt);
\end{tikzpicture}
\caption[smallcaption]{Right, the disconnected graph $\tilde G$, if
$\breve G$ is the "N" graph.} \label{Fig:graphtilde}
\end{center}
\end{figure}
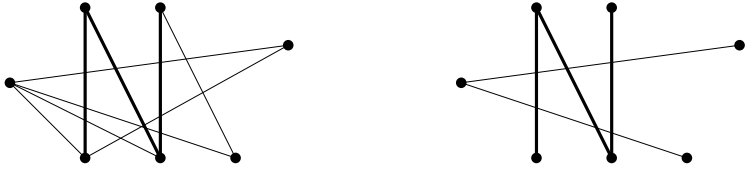

Consider a connected graph $G$, a partition $\breve{\maV}\cup\hat{\maV}$ of $\maV$, and the disconnected graph $\tilde
G$ as defined above. For two priority matching policies $\Phi$ on
$G$ and $\tilde \Phi$ on $\tilde G$, we say that the restrictions to
$\breve G$ of $\Phi$ and $\tilde \Phi$ coincide, if (recall
(\ref{eq:Phiji}))
\[\Phi_j(i)\cap\breve{\maV}=\tilde\Phi_j(i)\cap\breve{\maV},\,i,j \in \breve
{\maV}.\] That is, for any elements $i,j,k$ of $\breve{\maV}$,
$j$ prioritizes $k$ over $i$ according to $\Phi$ if and only if it
does so according to $\tilde\Phi$. We then have the following
result.

\begin{lemma} \label{LemInclusion2}
Let $G=(\maV,\maE)$ be a connected graph,
$\breve{\maV}\cup\hat{\maV}$ be a partition of $\maV$ and $\tilde G$
be the corresponding disconnected graph. Consider the two matching
queues $\Sigma:=\left(G,\lambda,\Phi\right)_{\ssc}$ and
$\tilde\Sigma:=(\tilde G,\lambda,\tilde \Phi)_{\ssc}$, where the
restrictions to $\breve G$ of the priority matching policies $\Phi$
and $\tilde \Phi$ coincide. Let $Q$ and $\tilde Q$ be the respective
queue processes of $\Sigma$ and $\tilde \Sigma$. Then, if
$Q(0)=\tilde Q(0)$, $Q_j(0)=\tilde Q_j(0)=0$ for all $j \in
\hat{\maV}$, and the two systems have the same input, we have that
$$\sum_{i\in \breve{\maV}} \left|Q_i(t) - \tilde Q_i(t)\right| \le \hat N(t),\quad t \ge 0,$$
where $\hat N(t)$ is the number of arrivals of items to $\hat{\maV}$
up to $t$.
\end{lemma}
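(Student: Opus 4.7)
My plan is to introduce an auxiliary matching queue $\Sigma^{\sharp}$ on the same graph $G$ under the same priority policy $\Phi$, starting from the same initial condition $Q^{\sharp}(0)=Q(0)$, but driven only by the arrivals that correspond to classes in $\breve{\maV}$ (the arrivals to $\hat{\maV}$ are simply discarded). Denote its queue process by $Q^{\sharp}$. Because $Q^{\sharp}_j(0)=0$ for $j\in\hat{\maV}$ and no $\hat{\maV}$-arrival ever enters $\Sigma^{\sharp}$, a straightforward induction on event times gives $Q^{\sharp}_j(t)\equiv 0$ for all $j\in\hat{\maV}$ and $t\ge 0$.

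The key identification is that $Q^{\sharp}_i(t)=\tilde Q_i(t)$ for every $i\in\breve{\maV}$ and $t\ge 0$. Indeed, in $\tilde G$ the $\breve{\maV}$-block of $\tilde Q$ evolves only in response to $\breve{\maV}$-arrivals (there are no cross edges) under the restriction of $\tilde{\Phi}$ to $\breve G$; while in $\Sigma^{\sharp}$ the $\breve{\maV}$-block of $Q^{\sharp}$ also evolves only in response to $\breve{\maV}$-arrivals (the $\hat{\maV}$-arrivals have been suppressed), and since every $\hat{\maV}$-neighbor is empty at all times in $\Sigma^{\sharp}$, the policy $\Phi$ effectively selects matches from $\breve{\maV}$ alone, via the restriction of $\Phi$ to $\breve G$. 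By hypothesis these two restricted policies coincide, so starting from the same initial data and driven by identical $\breve{\maV}$-arrivals, $Q^{\sharp}$ and $\tilde Q$ are pathwise equal on $\breve{\maV}$.

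The bound will then follow from a non-expansiveness comparison between $Q$ and $Q^{\sharp}$, both of which live on $G$ under $\Phi$. At every arrival to $\breve{\maV}$, the two systems are driven by the same event and the case-by-case analysis in the proof of Lemma~\ref{lemma:Inclusion1} applies verbatim, yielding that $\|Q-Q^{\sharp}\|$ does not increase. At every arrival to $\hat{\maV}$, only $Q$ reacts: it either queues a single class-$j$ item or removes one item from a neighbor's queue, so exactly one coordinate of $Q$ changes by $\pm 1$ and hence $\|Q-Q^{\sharp}\|$ varies by at most one. Since $\|Q(0)-Q^{\sharp}(0)\|=0$, induction on event epochs gives $\|Q(t)-Q^{\sharp}(t)\|\le\hat N(t)$, whence
\[
\sum_{i\in\breve{\maV}} \left|Q_i(t)-\tilde Q_i(t)\right|
 \;=\; \sum_{i\in\breve{\maV}} \left|Q_i(t)-Q^{\sharp}_i(t)\right|
 \;\le\; \|Q(t)-Q^{\sharp}(t)\|
 \;\le\; \hat N(t).
\]

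The main subtlety to verify is that the case analysis of Lemma~\ref{lemma:Inclusion1} survives the asymmetry between $Q$ and $Q^{\sharp}$, in particular when $\Phi$ matches an incoming $\breve{\maV}$-item with a partner $k^x\in\hat{\maV}$ in $Q$ but with a different partner $k^{\sharp}\in\breve{\maV}$ in $Q^{\sharp}$. This cross-type situation is already covered by case~(2) of that lemma: the argument there only uses that the policy is priority-based, together with the fact that the queue of the partner selected by one system but not the other must be zero in the other system. Here this is automatic whenever $k^x\in\hat{\maV}$ (since $Q^{\sharp}_{k^x}\equiv 0$), and it is enforced by the priority rule in the remaining case, so no additional argument is required.
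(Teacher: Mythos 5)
Your proof is correct, but it follows a genuinely different route from the paper's. The paper compares $Q$ restricted to $\breve{\maV}$ directly with $\tilde Q$ restricted to $\breve{\maV}$ by introducing the cross-matching epochs $\hat U_n$, i.e.\ the times at which $\Sigma$ matches a $\breve{\maV}$-item with a $\hat{\maV}$-item; between consecutive $\hat U_n$'s the two restricted processes behave as two copies of the matching queue $(\breve G,\lambda_{\breve\maV},\breve\Phi)_{\ssc}$ with the same input, so Lemma~\ref{lemma:Inclusion1} applies piecewise, and at each $\hat U_n$ the $\ell^1$-distance can jump by at most one. You instead introduce an auxiliary process $Q^\sharp$ living on the \emph{full} graph $G$ under $\Phi$, fed only by the $\breve{\maV}$-arrivals; after the (correct) observations that $Q^\sharp_{\hat\maV}\equiv 0$ and that $Q^\sharp_{\breve\maV}$ coincides pathwise with $\tilde Q_{\breve\maV}$, you compare $Q$ and $Q^\sharp$ on $G$ and cleanly separate the two effects: at shared $\breve{\maV}$-arrivals the one-step non-expansiveness argument of Lemma~\ref{lemma:Inclusion1} applies verbatim (the cross-type case $k^Q\in\hat\maV$, $k^\sharp\in\breve\maV$ falls under case~(2) because $Q^\sharp_{k^Q}\equiv 0$ automatically), while at each $\hat\maV$-arrival only $Q$ moves and by exactly one unit in one coordinate. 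What your approach buys is that both processes being compared live on the same graph under the same policy, so you never have to argue that a restriction behaves as a matching queue on the induced subgraph between special epochs; what the paper's approach buys is a slightly tighter intermediate bound (number of cross-matchings rather than number of $\hat\maV$-arrivals), although the lemma only asserts the $\hat N(t)$ bound, so nothing is lost. Both arguments are sound and of comparable length.
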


\begin{proof}
Let $\hat T_n$, $n \ge 1$, be the points of $\hat N$ (i.e., the arrival times of elements of $\hat{\maV}$).
Let also $\hat U_0=0$ and for all $n \ge 0$,
$$\hat U_{n+1}=\inf\left\{t \ge \hat U_n;\,\scriptsize{\mbox{a matching occurs at }t\mbox{ in }\Sigma \mbox{ between an element of }
\breve {\maV} \mbox{ and one of }\hat {\maV}}\right\}.$$ Notice that
the times $\hat U_n$, $n\ge 1$ can either coincide with points of
$\hat N$, or with arrival times of elements of $\breve {\maV}$ that
are matched with an element of $\hat {\maV}$ in $\Sigma$.

Let $n\ge 0$. In the time interval $\left(\hat U_n,\hat
U_{n+1}\right)$, the restrictions to $\breve G$ of $\Sigma$ and of
$\tilde \Sigma$ both behave exactly as the matching queue
$\left(\breve G,\lambda_{\breve{\maV}},\breve \Phi\right)_{\ssc}$,
where $\breve \Phi$ is the restriction of $\Phi$ and $\tilde \Phi$
to $\breve G$. Therefore we can apply Lemma \ref{lemma:Inclusion1}
to the latter model and to the initial conditions
$Q_{\breve{\maV}}\left(\hat U_{n}\right)$ and $\tilde
Q_{\breve{\maV}}\left(\hat U_{n}\right)$, where we recall that
$X\left(t^-\right)$ denotes the left-limit of $X$ at $t$. We
conclude that for any $n \ge 0$,
\begin{equation}
\label{eq:inclusion1} \sum_{i\in \breve{\maV}} \left|Q_i\left(\hat
U_{n+1}^-\right) - \tilde Q_i\left(\hat U_{n+1}^-\right)\right| \le
\sum_{i\in \breve{\maV}} \left| Q_i\left(\hat U_{n}\right) - \tilde
Q_i\left(\hat U_{n}\right)\right|.
\end{equation}
Furthermore, we have the following alternatives:
\begin{enumerate}
\item If $\hat U_{n+1}$ is an arrival time of an item of class in $\hat {\maV}$ (it coincides with some $\hat T_k$),
this item is matched immediately with an items of a class in $\breve{\maV}$
(say, of class $j$), which leaves the buffer of $\Sigma$. Hence, in the
restriction of $Q$ to $\breve{\maV}$, all coordinates remain
unchanged except for the $j$th coordinate, which decreases by 1. On the
other hand, the restriction of $\tilde Q$ to $\breve {\maV}$ does not
change.
\item If $\hat U_{n+1}$ is an arrival time of an item of class $j\in\breve {\maV}$ that is matched immediately with an element of
$\hat {\maV}$ in $\Sigma$, then the buffer content of $\Sigma$
restricted to $\breve {\maV}$ does not change, but that of $\tilde
\Sigma$ does:
    \begin{itemize}
    \item[2a.] if the arriving item is matched in $\tilde \Sigma$ with an item of class $k \in
    \maE(j)\cap \breve {\maV}$, the $k$th coordinate of $\tilde Q$ decreases by 1, while all other coordinates remain unchanged;
    \item[2b.] if the arriving item does not find a match in $\breve {\maV}$ in $\tilde
    \Sigma$, it is stored in the buffer and the $j$th coordinate of $\tilde Q$ increases by 1, while all other coordinates remain unchanged.
    \end{itemize}
\end{enumerate}
In all cases, we obtain that
\begin{equation}
\label{eq:inclusion2} \sum_{i\in \breve{\maV}} \left| Q_i\left(\hat
U_{n+1}\right) - \tilde Q_i\left(\hat U_{n+1}\right)\right| \le
\sum_{i\in \breve{\maV}} \left| Q_i\left(\hat U_{n+1}^-\right) -
\tilde Q_i\left(\hat U_{n+1}^-\right)\right| +1.
\end{equation}
Finally, gathering (\ref{eq:inclusion1}) and (\ref{eq:inclusion2}), as $Q(0)$ and $\tilde Q(0)$ coincide, we obtain that for any $t\ge 0$,
\begin{equation} \label{eq:inclusion}
\sum_{i\in \breve{\maV}} | Q_i\left(t\right) - \tilde Q_i\left(t\right)| \le \sum_{n\ge 1} \ind_{\{\hat U_n \le t\}}.
\end{equation}
Finally, observe that the instants $\hat U_n$ are departure
times of items of class in $\hat{\maV}$. As there are no such items
in storage initially, the number of such instants up to $t$ cannot
exceed the number of arrivals of items of class in $\hat{\maV}$ up
to $t$. Thus we have
\begin{align*}
\sum_{n\ge 1} \ind_{\{\hat U_n \le t\}}
& \le \sum_{n\ge 1} \ind_{\{\hat T_n \le t\}},
\end{align*}
which, together with (\ref{eq:inclusion}), concludes the proof.
\end{proof}

\subsection{Proof of Theorem \ref{thm:main}} \label{subsecProofMain}
We are now in position to prove Theorem \ref{thm:main}. The strategy
is the following: We fix a non-bipartite and non-separable graph $G
\in \mathscr G_7^c$. Lemma \ref{Lm:BipSepCycles} entails that such a
graph induces particular types of graphs, for which the
corresponding matching queues $\left(G,\lambda,\Phi\right)_{\ssc}$
are proved to be possibly unstable for some $\lambda \in \mbox{{\sc
Ncond}}_{\ssc}(G)$ and some matching policy $\Phi$ (Sections
\ref{subsec:pendant} and \ref{subsec:5cycle}).
The instability of the matching queue under consideration can then be deduced 
from the non-chaoticity property in Lemma \ref{LemInclusion2}.

\begin{proof}[Proof of Theorem \ref{thm:main}]
Let $G=(\maV,\maE)$ be a non-bipartite and non-separable graph in
$\mathscr G_7^c$. By Lemma \ref{Lm:BipSepCycles}, $G$ induces a
pendant graph or an odd cycle of size $5$. Let $\breve
G=(\breve{\maV},\breve{\maE})$ be that induced subgraph. Then there
exists an arrival-rate vector $\breve{\lambda} \in
\left(\R_{++}\right)^{|\breve{\maV}|}$ and a matching policy $\breve
\Phi$, such that the matching queue $\left(\breve G,\breve
\lambda,\breve \Phi\right)_{\ssc}$ is unstable, whereas $\breve
\lambda \in \mbox{{\sc Ncond}}_{\ssc}(\breve G)$. (This latter claim
follows from Proposition \ref{prop:nonsufficientpendant} if $\breve
G$ is a pendant graph, and from Proposition
\ref{prop:nonsufficient5cycle} if $\breve G$ is a 5-cycle.) We fix
the latter $\breve \lambda$ until the end of the proof and set
(recall (\ref{eq:deflambda}))
\begin{equation}
\tau := \min \Bigl\{\overline{\breve\lm}_{\breve \maE\left(\breve \maI\right)} - \overline{\breve\lm}_{\breve \maI} :\, \breve \maI\in \I(\breve G)\Bigl\}.\label{eq:defalpha}
\end{equation}
Let $\hat{\maV}= \maV \setminus \breve{\maV}$ and denote $\hat
G=\left(\hat{\maV},\hat{\maE}\right)$ the induced subgraph in $G$.
In view of Proposition \ref{pro:Pendant} and Proposition \ref{pro:5cycle}, 
there exists a node $i_0\in \breve V$ ($i_0=4$ if $\breve G$ is a
pendant graph and $i_0=5$ if $\breve G$ is a $5$-cycle) and a
measure $\breve\pi$ on $\mathbb E_{2}$ such that the drift of the
$i_0$-coordinate of the fluid limit reads
\begin{equation}
\label{eq:defbeta} \beta:=\breve\lm_{i_0} - \sum_{j\in
\maE\left(i_0\right)} \breve \lm_{j}\breve\pi \Bigl(\maP^{\maS}_{j}(i_0)\Bigl)>0.
\end{equation}
Set $\gamma={1\over 2}\left(\tau \wedge \beta\right)$, and let $\lambda \in \R_{++}^{|\maV|}$ satisfy
\begin{equation}
\label{eq:hypoepsilon2}
\left\{\begin{array}{ll}
\lm_{\breve {\maV}} &= \breve \lm;\\
\bar{\lm}_{\hat {\maV}} &\le \gamma.
\end{array}\right.
\end{equation}
We first prove that $\lambda \in \mbox{{\sc Ncond}}_{\ssc}(G).$
For $\maI \in \I(G)$, observe that
$$\breve \maE\left(\maI \cap \breve {\maV}\right) \cup \hat \maE\left(\maI \cap \hat {\maV}\right) \subset \maE(\maI).$$
Therefore,
\begin{equation}
\label{eq:Niki1}
\bar{\lm}_{\maE(\maI)} - \bar{\lm}_{\maI}
\ge \bar{\lm}_{\breve \maE(\maI \cap \breve {\maV})} + \bar{\lm}_{\hat \maE(\maI \cap \hat {\maV})} - \bar{\lm}_{\maI \cap \breve {\maV}} - \bar{\lm}_{\maI \cap \hat {\maV}}.
\end{equation}
First, as $\breve G$ is induced in $G$, $\maI \cap \breve {\maV} \in \I(\breve G)$. Thus, (\ref{eq:defalpha}) implies that
$$\bar{\lm}_{\breve \maE(\maI \cap \breve {\maV})}-\bar{\lm}_{\maI \cap \breve {\maV}} > \gamma.$$
Moreover, from (\ref{eq:hypoepsilon2}) we clearly have that
$$\bar{\lm}_{\hat \maE(\maI \cap \hat {\maV})} - \bar{\lm}_{\maI \cap \hat {\maV}} \ge -\gamma.$$
These two observations, together with (\ref{eq:Niki1}), yield
$\bar{\lm}_{\maE(\maI)} - \bar{\lm}_{\maI} >0$. Therefore $\lm \in \mbox{{\sc Ncond}}_{\ssc}(G).$


It remains to construct a matching policy $\Phi$ on $G$ rendering
the matching queue $\left(G,\lambda,\Phi\right)_{\ssc}$ unstable. To
this end, it suffices to consider any $\Phi$ whose restriction to
$\breve G$ is $\breve \Phi$. Consider the process $\tilde Q$
constructed in Lemma \ref{LemInclusion2}, associated with the
disconnected graph corresponding to the partition
$\breve{\maV}\cup\hat{\maV}$. If $Q(0)$ and $\tilde Q(0)$ are equal,
and satisfy $Q_j(0)=\tilde Q_j(0)=0$ for all $j \in \hat{\maV}$,
then it follows from Lemma \ref{LemInclusion2} that
\begin{equation}
\label{eq:final1}
Q_{i_0} \ge_{st} \tilde Q_{i_0}-\hat N,
\end{equation}
where $\hat N$ denotes again the arrival process of items of class
in $\hat {\maV}$. For $n\ge 1$, let $\hat N^n$ be a Poisson process
with intensity $n\bar\lm_{\hat {\maV}}$. Then, for any initial
condition $\breve Q^n(0)$ such that $\breve Q_{i_0}^n(0)=nx$,
the following convergence holds in $\mathbb D^{\mid \breve
{\maV}\mid}$,
\begin{multline*}
\proce{{1\over n}\breve Q^n(t) - {1\over n} \hat N^n(t) \gre_{i_0}}\\
\Rightarrow \proce{\left(x+\breve\lm_{i_0} - \sum_{j\in
\maE\left(i_0\right)} \breve \lm_j\breve\pi
\left(\maP^{\maS}_{j}(i_0)\right)-\bar\lm_{\hat{\maV}}\right)t\gre_{i_0}}.
\end{multline*}
Together with (\ref{eq:defbeta}) and (\ref{eq:hypoepsilon2}), the
above convergence implies that the Markov process $\breve Q - \hat N
\mathbf e_{i_0} $ is transient and that $\breve Q_{i_0} -
\hat N \Ra \infty$ as $t \to \infty$.

Finally, observe that by definition, the restriction to
$\breve{\maV}$ of $\tilde Q$ has the same distribution as $\breve Q$
if $\breve Q(0)$ is set equal to the restriction to
$\breve{\maV}$ of $\tilde Q(0)$. Thus, the $\ZZ_+^{\mid
\maV\mid}$-valued Markov process $\tilde Q - \hat N\gre_{i_0}$ is
transient and its $i_0$-coordinate converges in distribution to
$\infty$ as $t \tinf$. By (\ref{eq:final1}), this is also the case
for the $i_0$-coordinate of $Q$, so that $Q$ is transient.
\end{proof}

\begin{remark}{\em
Observe that, for any non-bipartite and non-separable graph $G$, the
the proof of Theorem \ref{thm:main} not only shows the existence of
a non-maximal priority policy on $G$, but also provides a simple way
of constructing that policy. Specifically, we have proven that for
{\em any} priority matching policy $\Phi$, if the restriction of
$\Phi$ to the induced sub-graph $\breve G$ is non-maximal for
$\breve G$, then $\Phi$ is also non-maximal for $G$. Consequently,
(i) if $G$ induces a pendant graph (whose nodes are labeled
as in Figure \ref{Fig:pendant}), then any priority policy $\Phi$ on
$G$ such that node 3 prioritizes nodes 1 and 2 over node 4 is
non-maximal; (ii) if $G$ induces a 5-cycle (whose nodes are labeled
as in Figure \ref{Fig:5cycle}), then any priority policy $\Phi$ on
$G$ such that node 3 prioritizes node 1 over 5, node 4 prioritizes
node 2 over 5, node 1 prioritizes 2 over 3 and node 2 prioritizes 1
over 4, is non-maximal.
}\end{remark}

\section{Proof of the FWLLN} \label{secFluidProof}

Our proof of the FWLLN will follow the pre-compactness approach \cite{Billingsley,WhittBook}.
In particular, we will show that $\{\barq^n : n \ge 1\}$ is tight in $\D^{|\maV|}$ and uniquely characterize the limit.


Recall that $\mGS$ defined by (\ref{eq:defmGS}) denotes the state space of $S^n$ for all $n \ge 1$.
For $\delta > 0$, let $\maM := \maM(\maL_\delta)$ denote the space of (finite) measures on the space $\maL_\delta := [0,\delta) \times \mGS$
such that $\mu([0,t] \times \mGS) = t$ for all $\mu \in \maM$
and $t \in [0, \delta)$, endowed with the Prohorov metric \cite{Billingsley}.
Next, define a sequence of random elements
$\{\nu^n : n \ge 1\} \subset \maM$ via
\bequ \label{randomMeasures}
\nu^n([0,t] \times \mathcal Z) := \int_0^t \ind_{\mathcal Z}(S^n(u-)) du, \quad 0 \le t < \delta , \quad \mathcal Z \subseteq \mGS.
\eeq


\begin{lemma} \label{LmTight}
If $\barq^n(0)$ is tight in $\RR^{|\maV|}$, then $\{(\barq^n, \nu^n) : n \ge 1\}$ is tight in
$\D^{|\maV|}[0, \delta) \times \maM$, for $\delta$ in Lemma \ref{lmBndFluid}.
Moreover, for any limit point $(\barq, \nu)$, 
\bequ \label{limRandomMeasure}
\nu\left([0,t] \times \mathcal Z\right) = \int_0^t p_u(\mathcal Z) \,du; \quad 0 \le t < \delta, ~\mathcal Z \subseteq \mGS
\eeq
for some family of probability measures $\{p_u : 0 \le u \le t < \delta\}$ on $\mGS$.
\end{lemma}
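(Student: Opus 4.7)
My plan is to prove tightness of the two components separately and then extract the representation \eqref{limRandomMeasure} by a standard disintegration argument.

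For tightness of $\{\bar Q^n\}$ in $\D^{|\maV|}[0,\delta)$, tightness of the initial values is immediate from the hypothesis that $\bar Q^n(0)$ is tight in $\RR^{|\maV|}$. The modulus-of-continuity estimate will come from the sample-path representation \eqref{repForQn}, which gives for every $i\in\maV$ and $0\le s\le t<\delta$ the deterministic bound
\[
\bigl|\bar Q^n_i(t)-\bar Q^n_i(s)\bigr|\,\le\,\frac{1}{n}\Bigl(N_i(nt)-N_i(ns)+\sum_{j\in\maE(i)}\bigl(N_j(nt)-N_j(ns)\bigr)\Bigr).
\]
I will then invoke the FSLLN for the independent Poisson processes, which shows that the right-hand side is uniformly close in probability to the Lipschitz function $(t-s)(\lambda_i+\bar\lambda_{\maE(i)})$ on compact subintervals of $[0,\delta)$. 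The standard Skorohod tightness criterion (\cite[Theorem 13.2]{Billingsley}) then concludes.

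For tightness of $\{\nu^n\}$ in $\maM$, since every $\nu^n$ has the same (Lebesgue) time-marginal on the bounded interval $[0,\delta)$, I can reduce tightness in the Prohorov topology to finding, for every $\varepsilon>0$, a finite subset $K_\varepsilon\subset\mGS$ such that
\[
\limsup_n\,\mathbf{E}\Bigl[\int_0^\delta \ind_{K_\varepsilon^c}\bigl(S^n(u)\bigr)\,du\Bigr]<\varepsilon.
\]
Using Lemma \ref{lmBndFluid}, which guarantees that $Q^n_{i_0}(u)>0$ for all $u\in[0,\delta)$ with probability tending to $1$, together with the generator identities \eqref{eq:genpremarginal}--\eqref{XnGenerator} and the coupling \eqref{XiEqDistSn} with $\chi^n(0)=S^n(0)$, I will rewrite
\[
\mathbf{E}\Bigl[\int_0^\delta \ind_{K^c}\bigl(S^n(u)\bigr)\,du\Bigr]\,=\,\frac{1}{n}\,\mathbf{E}_{S^n(0)}\Bigl[\int_0^{n\delta}\ind_{K^c}\bigl(\chi(s)\bigr)\,ds\Bigr]+o(1).
\]
Assumption \ref{AssErgo} supplies a Foster--Lyapunov function $V$ for $\chi$ with $\mathscr{A}^{\maS}V\le-c+d\ind_C$ on a finite set $C$, which yields the uniform estimate $(1/n)\mathbf{E}_x[\int_0^{n\delta}\ind_{K^c}(\chi(s))\,ds]\le\delta\pi(K^c)+V(x)/n$ for every $x\in\mGS$. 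Assumption \ref{AssInitial} gives $S^n(0)/n\to 0$ in probability, so choosing $V$ with at most polynomial growth ensures $V(S^n(0))/n\to 0$ in probability; selecting $K_\varepsilon$ finite with $\pi(K_\varepsilon^c)<\varepsilon/(2\delta)$ then yields the required bound.

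The representation \eqref{limRandomMeasure} of any subsequential weak limit $(\bar Q,\nu)$ will follow by passing the marginal identity $\nu^n([0,t]\times\mGS)=t$ to the limit, forcing the time-marginal of $\nu$ to be Lebesgue on $[0,\delta)$; an elementary disintegration of a finite measure on the product of $[0,\delta)$ with the countable space $\mGS$ then delivers the measurable family $\{p_u:0\le u<\delta\}$ of probability measures on $\mGS$. The main obstacle will be the tightness of $\{\nu^n\}$: plain ergodicity of $\chi$ does not immediately transfer to uniform-in-$n$ control of the expected sojourn time in $K_\varepsilon^c$, because $S^n(0)$ is random and only $o(n)$ by Assumption \ref{AssInitial}, so the Foster--Lyapunov correction described above is what absorbs the initial-condition dependence.
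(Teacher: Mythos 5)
Your overall decomposition matches the paper's: tightness of $\{\barq^n\}$ via a modulus-of-continuity estimate built from the Poisson arrival processes, tightness of $\{\nu^n\}$ via a compact-containment estimate on the occupation measures, and the representation of the limit occupation measure. The paper handles the last two steps by invoking Lemmas~1.3 and~1.4 of \cite{KurtzAP} rather than your Prohorov/disintegration argument, but these are essentially equivalent tools and your reduction to finding a finite $K_\varepsilon$ with $\limsup_n \mathbf E[\nu^n([0,\delta)\times K_\varepsilon^c)] < \varepsilon$ is precisely what Kurtz's Lemma~1.3 demands.

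You are right to flag that the paper's appeal to ``ergodicity of $\chi$ and the bounded convergence theorem'' in \eqref{TightCondRM} silently treats $\chi^n(t)=\chi(nt)$ as a time-scaling of a \emph{single} CTMC whose initial law does not depend on $n$; Assumption~\ref{AssInitial} only forces $S^n(0)=o_P(n)$, so in principle $\chi^n(0)$ could escape to infinity and the Birkhoff average would not transfer automatically. That said, in every application in the paper (Propositions~\ref{pro:Pendant}, \ref{pro:5cycle}, and the proof of Theorem~\ref{thm:main}) the initial conditions have $S^n(0)\equiv\mathbf 0$, so the single-process ergodic argument the paper sketches is rigorous there.

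Your proposed Foster--Lyapunov repair, however, is not airtight. First, the inequality $(1/n)\mathbf E_x[\int_0^{n\delta}\ind_{K^c}(\chi(s))\,ds]\le\delta\pi(K^c)+V(x)/n$ is not what a drift condition of the form $\mathscr A^{\maS}V\le -c+d\ind_C$ gives you; Dynkin's formula delivers $\mathbf E_x[\int_0^T g(\chi(s))\,ds]\le V(x)+bT$ with no link between $b$ and $\pi(K^c)$. A bound of the type you want --- a uniform $O(V(x))$ correction to the stationary occupation --- is a \emph{geometric} ergodicity estimate, which Assumption~\ref{AssErgo} (plain positive recurrence) does not supply. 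Second, the claim that a polynomial-growth $V$ together with $S^n(0)/n\Ra 0$ gives $V(S^n(0))/n\Ra 0$ fails in general: if $V(x)\asymp\|x\|^k$ with $k\ge 2$ and $\|S^n(0)\|\asymp n^{1/2}$, then $V(S^n(0))/n$ does not vanish, even though $S^n(0)/n\Ra 0$. Only linear growth of $V$ is safe, and there is no reason the Lyapunov function for an arbitrary ergodic $\chi$ has linear growth. If you want to handle genuinely growing $S^n(0)$, a cleaner route is to let $\sigma_n$ be the first hitting time of $\mathbf 0$ by $\chi^{(n)}$, show $\sigma_n=o_P(n)$ from the negative-drift structure of $\mathscr A^{\maS}$, and use the strong Markov property to reduce to the fixed initial condition $\mathbf 0$ after time $\sigma_n$; the pre-$\sigma_n$ contribution to $\nu^n$ is then $O_P(\sigma_n/n)=o_P(1)$. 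As written, your argument is not a complete substitute for the paper's step.
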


\begin{proof}
Tightness of the vector $\{(\barq^n, \nu^n) : n \ge 1\}$ follows from the tightness of each component separately; e.g., Theorem 11.6.7 in \cite{WhittBook}.
We start by showing that $\{\nu^n : n \ge 1\}$ is tight in $\maM$. 

It follows from \eqref{XiEqDistSn} that, for any compact set $\mathcal Z \subset \mGS$ and $0 \le t < \delta$,
\bequ \label{TightCondRM}
\bsplit
E[\nu^n([0,t] \times \mathcal Z)] & = E\left[ \int_0^t \ind_{\mathcal Z}(S^n(u-)) du \right] \\
& = E\left[ \int_0^t \ind_{\mathcal Z}(\chi^n(u-)) du \right] \\
& = E\left[ \frac{1}{n}\int_0^{nt} \ind_{\mathcal Z} (\chi(u-)) du \right] \\
& \ra P(\chi(\infty) \in \mathcal Z) t \\
& = \pi(\mathcal Z) t.
\end{split}
\eeq
The limit in \eqref{TightCondRM} holds as $n\tinf$ and follows from the ergodicity of $\chi$ and the bounded convergence theorem.

Take $\ep > 0$ and $t \in [0, \delta)$. Observe that, for any finite $N$ and all $n \le N$, the right-hand side of the second equality in \eqref{TightCondRM}
implies that for large-enough $c \in \ZZ_+$ and for $K_c := [0,c]^{|\maS|}$, it holds that $E[\nu^n([0,t] \times K_c)] \ge (1-\ep) t$.
The limit in \eqref{TightCondRM} shows that this latter inequality holds for all $n$, i.e., we have
\bes
\inf_n E[\nu^n([0,t] \times K_c)] \ge (1-\ep) t,
\ees
for a sufficiently large $c$.
Hence, tightness of $\{\nu^n : n \ge 1\}$ follows from Lemma 1.3 in \cite{KurtzAP}.
The fact that any limit point of this tight sequence has the form in \eqref{limRandomMeasure} follows from Lemma 1.4 in \cite{KurtzAP}.

We next show that $\{\barq^n : n \ge 1\}$ is $\mathbb C$-tight in $\D^{|\maV|}$,
i.e., it is tight and any of its limit points is continuous.
We first note that, since all the jumps in $\barq^n$ are of size $1/n$, and are thus converging to $0$ as $n \tinf$,
we can work with the modulus of continuity defined for continuous functions (see, e.g., \cite[p.\ 123]{Billingsley})
\bes
w(y, \eta, T) := \sup\biggl\{|y(t_2) - y(t_1)| : 0 \le t_1 \le t_2 \le T, ~ |t_2 - t_1| \le \eta\biggl\},~\eta > 0,~ T > 0.
\ees
Hence, we can establish the result by applying Theorem 11.6.3 in p. 389 in \cite{WhittBook}.
Conditions (6.3) in that theorem, namely, tightness of $\left\{\barq^n(0) : n \ge 1\right\}$ in $\RR^{|\maV|}$ is assumed.
To show that condition (6.4) in \cite[Theorem 11.6.3]{WhittBook} also holds, observe that, for all $i \in \maV$ and $n \ge 1$,
\bes
|\barq^n_i(t_2) - \barq^n_i(t_1)| \le \frac{N^n_i(t_2) - N^n_i(t_1)}{n} + \frac{Z^n_i(t_2) - Z^n_i(t_1)}{n},
\ees
where $N^n_i$ denotes the time-scaled Poisson arrival process to node $i$ (see \S \ref{secFWLLN}), and
$Z^n_i(t)$, $n \ge 1$, is the number of type-$i$ items that were matched (and left the system)
by time $t$. Then, we have that
\bes
w(\barq^n_i, \eta, T) \le w\left(N^n_i/n, \eta, T\right) + w\left(Z^n_i/n, \eta, T\right), \quad \eta, T > 0.
\ees
It follows from the representation of $Q^n_i$ in \eqref{repForQn} that
the oscillations of  $\barq^n_i$ are bounded by those of the time-scaled Poisson processes $N^n_i$ and $N^n_j$, $j \in \maE(i)$. Hence,
\bequ \label{modulusBd1}
w(\barq^n_i, \eta, T) \le w(N^n_i/n, \eta, T) + \sum_{j \in \maE(i)}w(N^n_j/n, \eta, T), \quad j \in \maE(i).
\eeq
By the FWLLN for Poisson processes, all the moduli of continuity for the scaled Poisson processes in \eqref{modulusBd1} are controlled, so that,
for every $\ep > 0$ and $\zeta > 0$, there exists $\eta > 0$ and $n_0 \in \ZZ_+$, such that
\bes
P(w(\barq^n_i, \eta, T) \ge \ep) < \zeta \qforallq n \ge n_0.
\ees
Thus, $\left\{\barq^n_i : n\ge 1\right\}$ is $\mathbb C$-tight for each $i \in \maV$, implying that $\left\{\barq^n : n\ge 1\right\}$ is $\mathbb C$-tight in $\D^{|\maV|}$,
and in particular, it is $\mathbb C$-tight in $\D^{|\maV|}[0,\delta)$.
\end{proof}



In the proof of Theorem \ref{thFWLLN} we employ the standard result that,
for all $i \in \maV$ and $n \ge 1$, the following process is a square integrable martingale (with respect to the filtration generated by the Poisson processes);
see, e.g., \cite{PTW07}.
\bequ \label{repMartQ}
\bsplit
M^n_i(t) & := \sum_{j\in \maE(i)}\int_0^t \ind_{\maN_i}(Q^n(s-)) \ind_{\maP_{j}(i)}(Q^n(s-))\, dN^n_j(s) \\
& \quad - \int_0^t \ind_{\maN_i}(Q^n(s-)) \left(\sum_{j\in \maE(i)} n\lm_j \ind_{\maP_{j}(i)}(Q^n(s-))\right)\, ds, \quad t \ge 0.\\
\end{split}
\eeq

\begin{proof}[Proof of Theorem \ref{thFWLLN}]
The assumed convergence of the initial condition $\barq^n(0)$ implies that it is also tight in $\RR^{|\maV|}$.
To characterize the limit, let us first consider $\barq^n_{i_0}$.
Fix $t < \delta$ and $n \ge n_0$, where $n_0$ is defined by Lemma \ref{lmBndFluid}.
Then \eqref{repForQn} can be written as follows for node $i_0$,
\bequ \label{eq:martn0}
\bsplit
\barq^n_{i_0}(t) & = \barq^n_{i_0}(0) + \int_0^{t} \ind_{\maO_{i_0}}(Q(s-)) dN^n_{i_0}(s) \\
& \quad - \sum_{j\in \maE\left(i_0\right)} \lm_j\int_0^t \ind_{\maP_j(i_0)}\left(Q^n(s)\right)\,ds -  M^n_{i_0}(t)/n,
\end{split}
\eeq
for $M^n_{i_0}$ in \eqref{repMartQ}.
Now, observe that for all $j \in \maE(i_0)$ and $s\le t$,
we have that $Q^n_k(s)=0$ for all $k \in \maE(i_0)\cap\maE(j)$.
Therefore, for all such $n$, $j$ and $s$ we have that
\[\ind_{\maP_j(i_0)}\left(Q^n(s)\right) = \prod_{\substack{\ell \in \llbracket 1,|\maS| \rrbracket:\\i_\ell \in \Phi_j(i_0)}}\ind_{\{0\}}\left(Q_{i_\ell}(s)\right)
=\ind_{\maP^{\maS}_j(i_0)}\left(S^n(s)\right).\] Thus, we obtain from
(\ref{eq:martn0}) that, for all $n\ge n_0$ and $t <\delta$,
\bequ \label{eq:martn} \bsplit
\barq^n_{i_0}(t) & = \barq^n_{i_0}(0) + \int_0^{t} \ind_{\maO_{i_0}}(Q(s-)) dN^n_{i_0}(s) \\
& \quad - \sum_{j\in \maE\left(i_0\right)} \lm_j\nu^n \left([0,t]\,
\times \maP^{\maS}_{j}(i_0)\right) - M^n_{i_0}(t)/n,
\end{split}
\eeq for $\nu^n$ in \eqref{randomMeasures}. Now, the equality in
distribution \eqref{XiEqDistSn} and the ergodicity of the CTMC
$\chi$ imply (similarly to \eqref{TightCondRM}) that \bequ
\label{nuLim} \nu^n([0,T] \times \mathcal{Z}) \Ra \pi(\mathcal Z)T
\qasq n \tinf, \quad \mathcal Z \subseteq \mGS, \eeq for $\pi$ in
\eqref{piDist}. By Lemma \ref{lmBndFluid} and the FWLLN for the
Poisson process, the second argument to the right of the equality in
\eqref{eq:martn} converges weakly to $\lm_{i_0} e$ in $\D[0,
\delta)$, where $e$ denotes the identity function $e(t) = t$. Since
$\ind_{\maN_i}(Q(s-))$ is identically equal to $1$ for all $n$ large
enough, again by Lemma \ref{lmBndFluid}, and
$M^n_{i_0}/n \Ra 0e$ in $\D[0, \delta)$ as $n\tinf$, 
by virtue of Doob's martingale inequality, the limit \eqref{FluidLim} follows from
\eqref{nuLim} and Lemma 7.3 in \cite{PWfluid} (a simple extension to the continuous mapping theorem).

Next, recall (\ref{eq:scalingS}) and (\ref{eq:scalingChi}),
and consider $\{\bar S^n : n \ge 1\}$. By Assumption
\ref{AssErgo}, each element $\chi^n, n \ge 1$, is an ergodic CTMC,
and thus $\{\bar \chi^n : n \ge 1\}$ is a $\mathbb C$-tight sequence
of ergodic CTMC's.
It follows
from Proposition 9.9 in \cite{RobertBook} that there exists an
a.s.-finite time $\maT$, such that $\bar{\chi}^n \Ra 0e$ in
$\D^{|\maS|}(\maT,\infty)$ as $n\tinf$. In particular, with $d_P$
denoting the Prohorov metric \cite{Billingsley, Durrett} (here,
denoted in terms of the distance between the random elements instead
of the distance between their corresponding probability measures in
the underlying probability space), it holds that, for any $t > \maT$
and for any $\ep > 0$,
\bequ \label{d_P} d_P(\bar
\chi^n(t), 0) < \ep/2, \quad \text{for all $n$ large enough.}
\eeq
 Since any limit point of the
tight sequence $\{\bar \chi^n : n \ge 1\}$ is continuous, if it is
ever larger than $0$, then it must be strictly positive over an
interval.

Fix $\ep > 0$ and let $\|\cdot\|_{tv}$ denote the total-variation norm; see, e.g., \cite{Durrett}.
It follows from \eqref{scalingChi} that for any $t > 0$, there exists
$n_1$, such that $\|\chi^n(t) - \chi(\infty)\|_{tv} < \ep/2$ for
all $n > n_1$. Hence, by the triangle inequality, for any $s < \maT$
and $t > \maT$ there exists $n_2$, such that for all $n > n_2$,
\bequ \label{d_tv} \|\chi^n(s) - \chi^n(t)\|_{tv} < \ep/2, \text{
or, equivalently, ~} \|\bar\chi^n(s) - \bar\chi^n(t)\|_{tv} < \ep/2.
\eeq Now, since the Prohorov metric and the total-variation metric
(induced by the total variation norm) are equivalent in discrete
state spaces, \eqref{d_tv} implies that $d_P(\bar \chi^n(s), \bar
\chi^n(t)) < \ep/2$ for all $n > n_2$. Together with \eqref{d_P} and
the triangle inequality, we obtain \bes d_P(\bar \chi^n(s), 0) <
\ep, \quad 0 < s < \maT. \ees The pointwise convergence of $\bar
\chi^n$ to $0$ implies that no limit point of the $\mathbb C$-tight
sequence $\{\bar \chi^n : n \ge 1\}$ can be strictly positive over
an interval, so that $\bar \chi^n \Ra 0e$, and in turn, by
\eqref{XiEqDistSn}, $\bar S^n \Ra 0e$ as $n \tinf$ in
$\D^{|\maS|}[0, \delta)$.

\paragraph{Increasing the interval of convergence}
The representation of $\barq^n_{i_0}$ in \eqref{eq:martn0} holds as long as $\barq^n_{i_0} > 0$, and in particular, over $[0,\rho^n)$.
Since $P(\barq^n_{i_0}(\delta-)>0) \ra 1$ as $n\tinf$ we conclude from the $\mathbb C$-tightness of $\barq^n$ over $[0,\delta]$
that the convergence of $\barq^n$ in fact holds over $[0, \delta]$.
We can then treat $\barq(\delta)$ as an initial condition, and apply Lemma \ref{lmBndFluid} for this new initial
condition to conclude that there exists a $\delta_2 > \delta$ such that $\barq^n_{i_0} > 0$ w.p.1 over $[0, \delta_2)$
for all $n$ large enough. Hence, the FWLLN holds over $[0, \delta_2)$ as well.
Repeating the same arguments inductively, we can continue increasing the interval of convergence as long as $\barq_{i_0}$ is guaranteed to be strictly positive,
where, in the induction step $k$ we take $\barq^n_{i_0}(\delta_k)$ as an initial condition and apply Lemma \ref{lmBndFluid} to find a $\delta_{k+1} > \delta_k$
such that $\barq^n \Ra \barq$ in $\mathbb D^{|\maV|}[0,\delta_{k+1})$.
If \eqref{CondDecrease} does not hold, then it follows from \eqref{FluidLim} that $\bar Q_{i_0}$ is nondecreasing.
Necessarily, $\rho^n \Ra \infty$ as $n\tinf$, and the convergence of $\barq^n$
to $\barq$ can be extended indefinitely. On the other hand,
if \eqref{CondDecrease} does hold, then the fluid limit $\barq_{i_0}$ is strictly decreasing.
The first passage time $\rho^n$ in \eqref{rho^n} is a continuous mapping by, e.g., Theorem 13.6.4 in \cite{WhittBook},
so that $\rho^n \Ra \rho$ in $\RR$ as $n\tinf$, for $\rho$ in \eqref{rho}, and the convergence of $\bar Q^n$ can be extended from $[0,\delta)$
to $[0, \rho)$.
\end{proof}

\section{Uniform matching policy} \label{sec:U}
Our main result shows that, for $G$ in $\mathscr G_7^c$, there
always exists a ``bad'' choice of priority matching policy, leading
to a stability region that is strictly smaller than
$\textsc{Ncond}_{\ssc}(G)$. In this section we show that the methods developed to prove this result can be
applied to other policies. In particular, we now consider a matching
policy in which the choice of which class to match with an arriving class-$i$ is
drawn uniformly at random from the {\em available} classes in $\maE(i)$ at the arrival epoch.
We refer to this policy as {\em uniform}, and denote it by $\Phi=\sU$.
To formally describe $\sU$, let $t$ be an arrival epoch of a class-$i$ item, and consider the set
\bequ \label{setU}
\maU_i(t) = \left\{j \in \mathcal E(i)\,:\,Q_j(t) >0\right\}.
\eeq
\begin{itemize}
\item[(i)] If $\maU_i(t)=\emptyset$, then no matching occurs, and the arriving item is placed in the buffer.
\item[(ii)] If $\maU_i(t)\ne\emptyset$, then the matching class is chosen uniformly at random, namely, the arriving class-$i$ item will be matched with
a class-$j$ item with probability $1/|\maU_i(t)|$, for each $j \in \maU_i(t)$.
\end{itemize}
The main question Theorem \ref{thm:main} answered was whether the
choice of the (admissible) matching policy affects the stability
region of matching queues having non-bipartite and non-separable
graphs.
The next result demonstrates that 
non-maximality of such graphs is not restricted to strict priority policies.
\begin{proposition} \label{propGR}
The only graphs in $\mathscr{G}_7^c$ for which $\textsc{Ncond}_{\ssc}(G)$ is non-empty and the policy $\sU$ is maximal are separable of order $3$ or more.
\end{proposition}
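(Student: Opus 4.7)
The plan is to mirror the proof of Theorem \ref{thm:main}, substituting $\sU$ for the priority policies throughout. Fix a non-bipartite and non-separable $G=(\maV,\maE)\in\mathscr{G}_7^c$; by Lemma \ref{Lm:BipSepCycles}(i), $G$ induces either a pendant graph or a $5$-cycle, which I denote by $\breve G=(\breve \maV,\breve \maE)$. It then suffices to exhibit an arrival-rate vector $\lambda\in\textsc{Ncond}_{\ssc}(G)$ for which $(G,\lambda,\sU)_{\ssc}$ is unstable. The argument has three steps: (a) extend Theorem \ref{thFWLLN} to the policy $\sU$, (b) use this extended FWLLN to exhibit instability of $(\breve G,\breve\lambda,\sU)_{\ssc}$ for some $\breve\lambda\in\textsc{Ncond}_{\ssc}(\breve G)$, and (c) lift this instability to $G$ via an extension of the non-chaoticity property (Lemma \ref{LemInclusion2}) to $\sU$.

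Step (a) is straightforward. After marking each Poisson arrival of class $j$ by an independent uniform $[0,1]$ random variable used to select a match from $\maU_j$, the queue process $Q$ remains a CTMC whose generator is obtained from \eqref{XnGenerator0} by replacing $\ind_{\maP_j(i)}(z)$ by $1/|\{k\in\maE(j):z_k>0\}|$ whenever $z_i>0$. The marginal process $\chi$ corresponding to $i_0$ is likewise a CTMC on $\mGS$, with an analogously modified generator. Under the analogs of Assumptions \ref{AssInitial}--\ref{AssErgo}, the proof of Theorem \ref{thFWLLN} goes through verbatim, yielding a fluid limit of the form \eqref{FluidLim} whose drift of $\barq_{i_0}$ equals $\lambda_{i_0}-\sum_{j\in\maE(i_0)}\lambda_j\,E_{\pi}\bigl[1/(1+|\maU_j\cap\maS|)\bigr]$ (the $+1$ accounting for the saturated node $i_0$ itself lying in $\maU_j$).

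For step (b) on the pendant graph, take $i_0=4$ and $\maS=\{1,2\}$, so that $\chi$ is $\mathbb{E}_2$-valued. From state $(i,0)$ with $i\geq 1$, an arriving class-$3$ item finds $\maU_3=\{1,4\}$ and therefore matches with class $1$ with probability $1/2$; hence $\chi$ moves up at rate $\lambda_1$ and down at rate $\lambda_2+\lambda_3/2$, with symmetric dynamics on the other axis. Detailed balance yields the reversible stationary distribution $\pi(i,0)=\alpha_\sU\bigl(\lambda_1/(\lambda_2+\lambda_3/2)\bigr)^i$ and $\pi(0,j)=\alpha_\sU\bigl(\lambda_2/(\lambda_1+\lambda_3/2)\bigr)^j$, with normalizing constant $\alpha_\sU$, provided $\lambda_1<\lambda_2+\lambda_3/2$ and $\lambda_2<\lambda_1+\lambda_3/2$. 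The fluid drift of $\barq_4$ then reads $\lambda_4-\lambda_3(1+\alpha_\sU)/2$. Plugging in the symmetric intensities $\lambda_1=\lambda_2=\epsilon/2$, $\lambda_3=1/2-\epsilon/4$, $\lambda_4=1/2-3\epsilon/4$ used in Proposition \ref{prop:nonsufficientpendant}, one obtains $\breve\lambda\in\textsc{Ncond}_{\ssc}(\breve G)$, $\alpha_\sU=1-4\epsilon+O(\epsilon^2)$, and a drift of order $\epsilon/2>0$ for $\epsilon$ small enough, hence instability by Lemma \ref{lmUnstable}. A parallel computation on the $5$-cycle, based on $\lambda_1=\lambda_2=\epsilon/2$, $\lambda_3=\lambda_4=1/4-\epsilon/8$, $\lambda_5=1/2-3\epsilon/4$, yields the same conclusion.

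The main obstacle is step (c): extending Lemmas \ref{lemma:Inclusion1} and \ref{LemInclusion2} to $\sU$. I would couple two systems $(Q^x,Q^y)$ by common Poisson arrivals together with a common sequence of auxiliary uniforms $\{U_n\}$: at the $n$-th arrival, say of class $j$, the selected match in each system is the $\lceil |\maU_j|\,U_n\rceil$-th element of $\maU_j$ under a fixed lexicographic enumeration of $\maE(j)$. Under this coupling, the two systems make identical choices whenever their $\maU_j$'s coincide, and otherwise the disagreement is once again attributable to a buffer being empty in one system but nonempty in the other; the case analysis of Lemma \ref{lemma:Inclusion1} then preserves the bound $\|Q^x(n+1)-Q^y(n+1)\|\leq \|Q^x(n)-Q^y(n)\|$. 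The proof of Lemma \ref{LemInclusion2}, which only invokes Lemma \ref{lemma:Inclusion1} over excursions between consecutive cross-matching events, extends without further modification. Finally, the argument of Theorem \ref{thm:main} produces $\lambda\in\textsc{Ncond}_{\ssc}(G)$ satisfying the analog of \eqref{eq:hypoepsilon2}, and the coupling inequality $Q_{i_0}\geq_{st}\tilde Q_{i_0}-\hat N$ together with the diverging fluid drift of $\breve Q_{i_0}-\hat N$ then yields instability of $(G,\lambda,\sU)_{\ssc}$ via Lemma \ref{lmUnstable}.
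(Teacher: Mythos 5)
Your overall plan mirrors the paper's proof (extend the FWLLN to $\sU$, show instability on the pendant graph and the 5-cycle, lift via a non-expansiveness coupling), and your steps (a) and (b) are essentially correct. The drift formula $\lambda_{i_0}-\sum_{j\in\maE(i_0)}\lambda_j\,E_{\pi}[1/(1+|\maU_j\cap\maS|)]$ agrees with the paper's \eqref{eq:driftGR}, and your explicit computations on the pendant graph are right; the 5-cycle case works as well with your rates (the paper chooses slightly different intensities in both cases, but both choices give a positive drift of order $\epsilon$).

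The genuine gap is in step (c). The coupling you propose—drawing a common $U_n\sim\textrm{Unif}(0,1)$ and selecting the $\lceil|\maU_j|\,U_n\rceil$-th element of $\maU_j$ in each system—does \emph{not} preserve the non-expansiveness bound of Lemma \ref{lemma:Inclusion1}. The reason is that the key dichotomy used in case~2 of that proof (if the two systems match different classes $k^x\neq k^y$, then $Q^x_{k^x}>0,Q^y_{k^x}=0$ or $Q^x_{k^y}=0,Q^y_{k^y}>0$) fails under your coupling. Concretely, take $\maE(j)=\{a,b,c\}$ in lexicographic order with $\maU^x_j=\{a,b\}$ and $\maU^y_j=\{a,b,c\}$ (so $Q^x_c=0<Q^y_c$ but $a,b$ have positive queues in both systems). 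For $U_n\in(1/3,1/2]$ your rule gives $\lceil 2U_n\rceil=1$ (system $x$ picks $a$) and $\lceil 3U_n\rceil=2$ (system $y$ picks $b$): both picked classes have strictly positive buffers in both systems, and the inequality $\|Q^x(n+1)-Q^y(n+1)\|\leq\|Q^x(n)-Q^y(n)\|$ can fail (e.g.\ $Q^x(n)=(1,2,0,\dots)$, $Q^y(n)=(2,1,1,\dots)$ gives $\|\cdot\|$ jumping from $3$ to $5$).

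The paper circumvents this with a different coupling (Lemma \ref{lemma:Inclusion1GR}): it draws \emph{independent} uniforms $K^x_n$ on $\maU^x_{C_n}$ and $K^y_n$ on $\maU^y_{C_n}$, and sets $\tilde K^y_n:=K^x_n$ on the event $\mathscr U_n=\{K^x_n,K^y_n\in\maU^x_{C_n}\cap\maU^y_{C_n}\}$ and $\tilde K^y_n:=K^y_n$ otherwise. This $\tilde K^y_n$ is still uniform on $\maU^y_{C_n}$, and crucially on $\mathscr U_n^c$ either $K^x_n\in\maU^x\setminus\maU^y$ or $K^y_n\in\maU^y\setminus\maU^x$, which restores the ``one buffer positive, the other empty'' dichotomy needed for the non-expansiveness case analysis. (An alternative fix, closer in spirit to yours, would be rejection sampling: index into $\maE(j)$ with common uniforms and resample until the chosen class has a positive buffer—then a disagreement can only occur at a step where one system accepts and the other rejects, recovering the dichotomy. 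But your direct indexing into $\maU_j$ does not have this property.) Once the coupling is fixed, the rest of step (c)—the extension of Lemma \ref{LemInclusion2} and the lifting argument—goes through as you describe.
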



The proof of Proposition \ref{propGR} follows the same steps of the proof of
Theorem \ref{thm:main}. We therefore specify only the arguments in the proof that need to be modified.
The main step that needs to be modified is the proof of Lemma \ref{lemma:Inclusion1}, which needs to be adapted to the policy $\sU$.
To this end, as in the proof of Lemma \ref{lemma:Inclusion1}, we must couple two systems with
initial buffer contents $x$ and $y \in \mathbb G$, such that
both systems are fed by the same Poisson processes. (We henceforth refer to those systems as ``system $x$'' and ``system $y$''.)

Let $\left\{T_n:\,n \ge 0\right\}$ and $\left\{C_n:\,n \ge 0\right\}$ denote, respectively,
the sequences of arrival times and of the classes of the entering items, in arrival order.
Consequently, for any $n\ge 0$, the $n$th arriving item makes a uniform choice from the
set $\maU^x_{C_n}\left(T_n\right)$ in system $x$ and the set $\maU^y_{C_n}\left(T_n\right)$ in system $y$,
where $\maU^x_{C_n}\left(T_n\right)$ and $\maU^y_{C_n}\left(T_n\right)$
denote the sets defined in \eqref{setU} for systems $x$ and $y$, respectively.
In the present case, the difficulty stems
from the fact that the sets $\maU^x_{C_n}\left(T_n\right)$ and $\maU^y_{C_n}\left(T_n\right)$ {\em a priori} differ,
even though both systems are constructed with the same input $\left\{(T_n,C_n):\,n \ge 0\right\}$.
Nevertheless, we can couple these two systems as follows.

Let
$\left\{K^x_n,\,n\ge 0\right\}$ and $\left\{K^y_n,\,n\ge 0\right\}$ denote two independent sequences of independent random variables,
where, for all $n \ge 0$, $K^x_n$ and $K^y_n$ follow the discrete uniform distribution on $\maU^x_{C_n}\left(T_n\right)$ and $\maU_{C_n}^y\left(T_n\right)$, respectively.
Set $K^x_n=0$ (resp. $K^y_n=0$) if $\maU^x_{C_n}\left(T_n\right)=\emptyset$ (resp., $\maU^y_{C_n}\left(T_n\right)=\emptyset$), and
for all $n\ge 0$, denote the event
\[\mathscr U_n=\left\{\left(K^x_n,K^y_n\right) \in \left(\maU_{C_n}^x\left(T_n\right) \cap \maU_{C_n}^y\left(T_n\right)\right)^2 \right\}.\]
Finally, set
\begin{equation}
\label{eq:deftildeU}
\tilde K^y_n = K^x_n \ind_{\mathscr U_n} + K^y_n \ind_{\mathscr U_n^c}.
\end{equation}
Then the random variables $\tilde K^y_n,\,n\ge 0$, are independent, and for all $n\ge 0$, $\tilde K^y_n$ is uniformly distributed on
$\maU_{C_n}^y\left(T_n\right)$.
To see this, observe that, if $\maU_{C_n}^y\left(T_n\right) \ne \emptyset$, then 
\begin{itemize}
\item for all $k \in \maU_{C_n}^y\left(T_n\right)\setminus \maU_{C_n}^x\left(T_n\right)$,
\[P\left(\tilde K^y_n = k\right)=P\left(\{K^y_n = k\} \cap \mathscr U_n^c\right)=P\left(K^y_n = k\right)={1 \over \left|\maU_{C_n}^y\left(T_n\right)\right|};\]
\item for all $k \in \maU_{C_n}^y\left(T_n\right)\cap \maU_{C_n}^x\left(T_n\right)$,
\begin{align*}
&P\left(\tilde K^y_n = k\right)
=P\left(\{\tilde K^y_n = k\}\cap \mathscr U_n\right)+P\left(\{\tilde K^y_n = k\}\cap \mathscr U_n^c\right)\\
&=P\left(\{K^x_n = k\}\cap \{K^y_n \in \maU_{C_n}^x\left(T_n\right) \cap \maU_{C_n}^y\left(T_n\right)\}\right)\\
&\quad\quad\quad+P\left(\{K^y_n = k\}\cap \{K^x_n \in \maU_{C_n}^x\left(T_n\right) \setminus \maU_{C_n}^y\left(T_n\right)\}\right)\\
&={1 \over \left|\maU_{C_n}^x\left(T_n\right)\right|}{\left|\maU_{C_n}^x\left(T_n\right) \cap \maU_{C_n}^y\left(T_n\right)\right| \over \left|\maU_{C_n}^y\left(T_n\right)\right|}\\
&\quad\quad\quad+ {1 \over \left|\maU_{C_n}^y\left(T_n\right)\right|}\left(1 - {\left|\maU_{C_n}^x\left(T_n\right) \cap \maU_{C_n}^y\left(T_n\right)\right| \over \left|\maU_{C_n}^x\left(T_n\right)\right|}\right)\\
&={1 \over \left|\maU_{C_n}^y\left(T_n\right)\right|}.
\end{align*}
\end{itemize}
We have the following analogue to Lemma \ref{lemma:Inclusion1},
\begin{lemma}
\label{lemma:Inclusion1GR}
Fix $G$ and the matching policy $\Phi=\sU$.
Let $x$ and $y$ be two elements in the state space $\mathbb G$ of $Q$, and denote by $Q^x$ and $Q^y$
the buffer content processes of the two models having initial values $x$ and $y$, respectively, and respectively fed by the
inputs $\left\{(T_n,C_n,K^x_n)\,:\,n\ge 0\right\}$ and $\left\{(T_n,C_n, \tilde K^y_n)\,:\,n\ge 0\right\}$.
Then for all $t\ge 0$,
$$\| Q^x(t)-Q^y(t) \|\, \le \, \| x-y \|.$$
\end{lemma}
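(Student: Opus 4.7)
The plan is to mirror the inductive argument in the proof of Lemma \ref{lemma:Inclusion1}, replacing the deterministic priority rule with the coupled uniform draws $(K^x_n,\tilde K^y_n)$ built in \eqref{eq:deftildeU}. Since both $Q^x$ and $Q^y$ are piecewise constant between successive arrival epochs, I only need to verify that the $\ell^1$-distance $\|Q^x-Q^y\|$ is non-expansive across a single arrival. Fix $n$ and consider the arrival of a class-$j$ item at time $T_{n+1}$, with $j=C_{n+1}$.

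At that arrival, I would split into four cases according to the emptiness of $\maU^x_j(T_{n+1})$ and $\maU^y_j(T_{n+1})$, and, when both are nonempty, according to whether $\mathscr U_{n+1}$ occurs. Three of these cases are direct: if both sets are empty then each system stores the new item and the $j$th coordinates increase by one on both sides, leaving the distance unchanged; if exactly one of the sets is empty the situation is covered verbatim by case 3 in the proof of Lemma \ref{lemma:Inclusion1}; and if both sets are nonempty and $\mathscr U_{n+1}$ occurs, then by \eqref{eq:deftildeU} we have $\tilde K^y_{n+1}=K^x_{n+1}$, so both systems match the arrival to the same class and the distance is preserved.

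The only genuinely new case is ``both sets nonempty with $\mathscr U_{n+1}^c$''. Here the key observation is that $\mathscr U_{n+1}^c$ forces $K^x_{n+1}\notin\maU^y_j(T_{n+1})$ or $K^y_{n+1}\notin\maU^x_j(T_{n+1})$, which, combined with $K^x_{n+1},K^y_{n+1}\in\maE(j)$, gives $Q^y_{K^x_{n+1}}(T_{n+1})=0$ or $Q^x_{\tilde K^y_{n+1}}(T_{n+1})=0$. Moreover, on $\mathscr U_{n+1}^c$ the two matched classes $k^x:=K^x_{n+1}$ and $k^y:=\tilde K^y_{n+1}=K^y_{n+1}$ are necessarily distinct, since otherwise their common value would belong to $\maU^x_j\cap\maU^y_j$, contradicting $\mathscr U_{n+1}^c$. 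These two facts are exactly the ingredients exploited in case 2 of the proof of Lemma \ref{lemma:Inclusion1}, so the same arithmetic on the two differing coordinates shows that the $\ell^1$-distance does not increase. Combining the four cases closes the induction.

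The main conceptual hurdle is therefore not in the norm computation itself but in the design of the coupling \eqref{eq:deftildeU}, whose role is precisely to align the uniform draws inside $\maU^x_j\cap\maU^y_j$ whenever this is compatible with preserving the correct marginal law of the draw in system $y$; once that alignment is in place the uniform-policy analysis collapses onto the priority-policy analysis of Lemma \ref{lemma:Inclusion1}. The verification that $\tilde K^y_n$ is indeed uniform on $\maU^y_{C_n}(T_n)$ has already been carried out in the construction preceding the statement, so no additional probabilistic work is required at this step.
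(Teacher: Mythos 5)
Your proposal is correct and follows essentially the same route as the paper: split on the emptiness of $\maU^x_j$ and $\maU^y_j$ and, when both are nonempty, on whether $\mathscr U_{n+1}$ occurs, then map each case onto cases 1--4 of the proof of Lemma \ref{lemma:Inclusion1}. The one place you go slightly beyond the paper is in explicitly verifying $k^x\ne k^y$ on $\mathscr U_{n+1}^c$, a fact the paper leaves implicit when it invokes case 2 of Lemma \ref{lemma:Inclusion1}.
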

\begin{proof}
We reason by induction, as in the proof of Lemma \ref{lemma:Inclusion1}, keeping the notation therein.
Suppose that we have at time $T_n$, $\| Q^x(n)-Q^y(n) \| \le \| x-y \|$. Then, we are in the following alternative:
\begin{enumerate}
\item On $\mathscr U_n$, we have by construction $K^x_n=\tilde K^y_n$, so the newly arrived item of class $C_n$ is matched with an item
of the same class $K^x_n$ in both systems. We are in the case 1 of the proof of Lemma \ref{lemma:Inclusion1}.
\item On $\mathscr U_n^c$, we have three possible cases:
      \begin{itemize}
      \item If both $\maU_{C_n}^x\left(T_n\right)$ and $\maU_{C_n}^y\left(T_n\right)$ are non-empty, then $K^x_n=k^x$ for some
      $k^x \in \maU_{C_n}^x\left(T_n\right)$ and $K^y_n=k^y$ for some $k^y \in \maU_{C_n}^y\left(T_n\right)$.
      However, it must hold that $k^x \not\in \maU_{C_n}^y\left(T_n\right)$ or that $k^y \not\in \maU_{C_n}^x\left(T_n\right)$ (or both), otherwise
      we would be in $\mathscr U_n$. In the first case (the other one is symmetric), we have that $Q^x_{k^x}(n)>0$ and $Q^y_{k^x}(n)=0$, and we are
      in case 2 of the proof of Lemma \ref{lemma:Inclusion1}.
      \item If exactly one of the two sets is empty, say $\maU_{C_n}^y\left(T_n\right)$ is empty and $\maU_{C_n}^x\left(T_n\right)$ is not (the other way around
      is symmetric), then the incoming item at $T_n$ is matched in the system initiated by $x$ and not in the system initiated by $y$, so we are in case 3 of the
      proof of Lemma \ref{lemma:Inclusion1}.
      \item If the two sets are empty, then the incoming item at $T_n$ is matched in none of the two systems, so we are in case 4 of Lemma \ref{lemma:Inclusion1}.\qedhere
      \end{itemize}
\end{enumerate}
\end{proof}

\begin{proof}[Proof of Proposition \ref{propGR}]
Fix a connected graph $G=(\maV,\maE) \in \mathscr{G}_7^c$, and $\Phi=\sU$. Fix a node $i_0 \in \maV$ and denote again
$\maS:= \maV\setminus\left(\{i_0\}\cup \maE(i_0)\right)=\left\{i_1,...,i_{|\maS|}\right\}$.
For any  $j\in \maV$ denote
\[\maS(j)=\left\{\ell \in \llbracket 1,|\maS| \rrbracket\,:\, i_\ell \in \maE(j)\right\},\]
and for any $r \in \llbracket 0,|\maS(j)| \rrbracket$, let
\begin{equation}
\label{eq:defVS} \maV^{\maS}_{j,r} = \Bigl\{x \in \mGS \,:\,
\mbox{Card}\left\{\ell \in \maS(j)\,:\,x_\ell >0\right\} = r \Bigl\},
\end{equation}
where, for notational convenience, we let $\mbox{Card}\,A$ denote
the cardinality of the set $A$.

Suppose that Assumption \ref{AssErgo} holds for the sequence of
processes $\{\chi^n \,:\, n\ge 1\}$ corresponding to the marginal
process $\chi$ of infinitesimal generator
\bequ
\label{XnGeneratorGR} \left\{\begin{array}{ll}
\mathscr{A}^{\maS,\sU}(x, x+\gre_\ell) & = \lm_{i_\ell} \ind_{\maO_{i_\ell}}(x),~ \ell\in \llbracket 1,|\maS| \rrbracket;\\
&\\
\mathscr{A}^{\maS,\sU}(x, x-\gre_\ell) & = \displaystyle\ind_{\maN_\ell^{\maS}}(x)
\sum_{j \in \maE(i_\ell)}\sum_{r=0}^{\maS(j)}{\lm_j \over r} \ind_{\maV^\maS_{j,r}}(x), ~\ell\in \llbracket 1,|\maS| \rrbracket. \\
\end{array}\right.
\eeq
As is easily seen from the definition of the policy $\sU$, and similarly to (\ref{XiEqDistSn}), the process
$\chi^n$ coincides in distribution with the restriction $S^n$ of the process $Q^n$ to its coordinates in $\maS$, as long as $Q^n_{i_0}$ remains strictly positive.
Provided that at time $t$, $Q^n_{i_0}(t)>0$, $S^n(t) \in
\maV^{\maS}_{j,r}$, and an item of a class $j\in \maE(i_0)$ enters
the system, the match of the incoming item is drawn uniformly among
all $r$ classes of $\maS(j)$ having items in line, and the class
$i_0$.
Consequently, under assumption \ref{AssInitial}, an analogous result to Theorem \ref{thFWLLN} holds, with the following drift for the fluid limit of the $i_0$-coordinate,
\bequ \label{eq:driftGR}
\lm_{i_0} - \sum_{j\in \maE\left(i_0\right)}\sum_{r=0}^{\maS(j)} {\lm_j \over r+1}\pi\left(\maV^{\maS}_{j,r}\right).
\eeq

Now, as a consequence of Lemma \ref{lemma:Inclusion1GR},
Lemma \ref{LemInclusion2} still holds true for the matching queues
$\left(G,\lambda,\sU\right)_{\ssc}$ and $(\tilde G,\lambda,\sU)_{\ssc}$, where the disconnected graph $\tilde G$ is constructed from $G$ and any induced
subgraph $\breve G$, as in Figure \ref{Fig:graphtilde}. From Lemma \ref{Lm:BipSepCycles}, $G$ induces a pendant graph or an odd cycle of size $5$,
and we let $\breve G=(\breve{\maV},\breve{\maE})$ be that induced subgraph.
In view of (\ref{eq:driftGR}), provided that we exhibit an arrival-rate vector $\breve{\lambda} \in \left(\R_{++}\right)^{|\breve{\maV}|}$ such that
\begin{equation*}
\beta:=\breve\lm_{i_0} - \sum_{j\in \maE\left(i_0\right)}\sum_{r=0}^{\maS(j)} {\breve\lm_j \over r+1}\pi\left(\maV^{\maS}_{j,r}\right)>0,
\end{equation*}
the matching queue $\left(\breve G,\breve \lambda,\sU\right)_{\ssc}$ is unstable,
and the proof follows the same arguments as the proof of Theorem \ref{thm:main}.
Thus, it remains to prove the existence of an unstable matching queue $\left(\breve G,\breve \lambda,\sU\right)_{\ssc}$
for $\breve G$ the pendant graph or the odd cycle. This is done as follows.

$\underline{\mbox{Pendant Graph.}}$ Set $i_0=4$. For $\Phi=\sU$,
from (\ref{XnGeneratorGR}) the generator of the marginal process
$\chi$ is the same as (\ref{eq:transPendant0}), replacing the
arrival rate $\breve \lm_3 := \lm_3$ to node 3 by $\breve \lm_3/2$.
(We add the `breve' to the notation of the arrival rates since we
are now considering the pendant graph as the induced graph $\breve
G$ in $G$.) Then, similarly to (\ref{eq:defalphaPendant}), we obtain
that
\begin{equation*}
\alpha :=\pi\left((0,0)\right)={(\breve\lambda_3/2)^2-(\breve\lambda_1-\breve\lambda_2)^2 \over (\breve\lm_3/2)(\breve\lambda_3+\breve\lambda_1+\breve\lambda_2)}.
\end{equation*}
The drift in (\ref{eq:driftGR}) reads
\begin{multline}
\breve\lm_{4} - \breve\lm_3 \pi\left((0,0)\right)-{\breve\lm_3 \over 2}\pi\left(\{0\}\times\Z_{++}\right)-{\breve\lm_3 \over 2}\pi\left(\Z_{++}\times \{0\}\right)\\
= \breve\lm_{4} - \breve\lm_3 \alpha - {\breve\lm_3 \over 2}(1-\alpha)= \breve\lm_{4} - {\breve\lm_3 \over 2}(1+\alpha).\label{eq:ethiopunck}
\end{multline}
Fix $\epsilon \in \left(0,7/15\right]$ and set (see Figure \ref{Fig:GRinstable})
\[\left\{\begin{array}{ll}
\breve\lambda_1&=\breve\lambda_2={\epsilon};\\
\breve\lambda_3&={1\over 2}-{\epsilon / 2};\\
\breve\lambda_4&={1\over 2}-{3\epsilon / 4}.
\end{array}\right.\]
It can be easily checked that $\breve \lambda \in \textsc{Ncond}\left(\breve G\right)$.
However, the drift in (\ref{eq:ethiopunck}) becomes
\[{1 \over 2} - {3\varepsilon \over 4}
-{1\over 2}\left({1\over 2} - {\varepsilon \over 2}\right)\left(1+{1/2 - \epsilon/2 \over 1/2 - \epsilon/2 + 4\epsilon}\right)
={\epsilon \over 4\left(1+7\epsilon\right)}\left(7-15\epsilon\right)>0.\]

$\underline{\mbox{5-cycle.}}$ Set $i_0=4$. For $\Phi=\sU$, from (\ref{XnGeneratorGR})
the generator of the marginal process is the same as (\ref{eq:trans5cycle}), replacing
the arrival rates to nodes 3 and 4, $\breve \lm_i = \lm_i$, by $\breve \lm_i/2$, $i=3,4$.
The drift in (\ref{eq:driftGR}) reads
\begin{multline}
\breve\lm_{5} - \breve\lm_3 \tilde\pi\left(\{0\}\times \Z_+\right)-\breve\lm_4 \tilde\pi\left(\Z_+ \times \{0\}\right)\\
- {\breve\lm_3 \over 2}\tilde\pi\left(\Z_{++}\times \{0\}\right) - {\breve\lm_4 \over 2}\tilde\pi\left(\{0\}\times\Z_{++}\right),\label{eq:ethiopunck5cycle}
\end{multline}
where $\tilde\pi$ is the stationary distribution of the fast process, obtained similarly to $\tilde \pi$ in (\ref{pro:5cycle}), replacing
the intensities at nodes 3 and 4, $\breve \lm_i = \lm_i$, by $\breve \lm_i/2$, $i=3,4$.

Now, if we fix $\epsilon \in \left(0,7/23\right]$ and set (see Figure \ref{Fig:GRinstable})
\[\left\{\begin{array}{ll}
\breve\lambda_1 &=\breve\lambda_2={\epsilon};\\
\breve\lambda_3 &=\breve\lambda_4 = {1\over 4}-{\epsilon / 4};\\
\breve\lambda_5 &={1\over 2}-{3\epsilon / 4}.
\end{array}\right.\]
Again, we can easily check that $\breve \lambda \in \textsc{Ncond}\left(\breve G\right)$, and the drift in (\ref{eq:ethiopunck5cycle}) equals
\[{1\over 2} - {3\varepsilon \over 4} - \left({1\over 2} - {\varepsilon \over 2}\right){1+7\varepsilon \over 1+15\varepsilon}
- \left({1\over 4} - {\varepsilon \over 4}\right){8\varepsilon \over 1+15\varepsilon}={\epsilon \over 4\left(1+15\epsilon\right)}\left(7-23\epsilon\right)>0.\] \qedhere
\end{proof}
\begin{figure}[h!]
\begin{center}
\begin{tikzpicture}
\draw[-] (0,2) -- (0,1);
\draw[-] (0,1) -- (-1,0);
\draw[-] (0,1) -- (1,0);
\draw[-] (-1,0) -- (1,0);
\fill (0,2) circle (2pt) node[above] {\small{${1\over 2}-{3\epsilon \over 4}$}} ;
\fill (0,1) circle (2pt) node[above right] {\small{${1\over 2}-{\epsilon \over 2}$}} ;
\fill (-1,0) circle (2pt) node[left] {\small{$\epsilon$}} ;
\fill (1,0) circle (2pt) node[right] {\small{$\epsilon$}} ;
\fill (5,2) circle (2pt) node[above]{{\small{${1\over 2}-{3\epsilon \over 4}$}}};
\fill (4,1) circle (2pt) node[left] {{\small{${1\over 4}-{\epsilon \over 4}$}}};
\fill (6,1) circle (2pt) node[right]{{\small{${1\over 4}-{\epsilon \over 4}$}}};
\fill (4,0) circle (2pt) node[left] {\small{$\epsilon$}};
\fill (6,0) circle (2pt) node[right] {\small{$\epsilon$}};
\draw[-] (4,0) -- (6,0);
\draw[-] (4,0) -- (4,1);
\draw[-] (6,0) -- (6,1);
\draw[-] (4,1) -- (5,2);
\draw[-] (6,1) -- (5,2);
\end{tikzpicture}
\caption[smallcaption]{Unstable uniform matching queues: left, on the pendant graph and right, on the 5-cycle.}\label{Fig:GRinstable}
\end{center}
\end{figure}
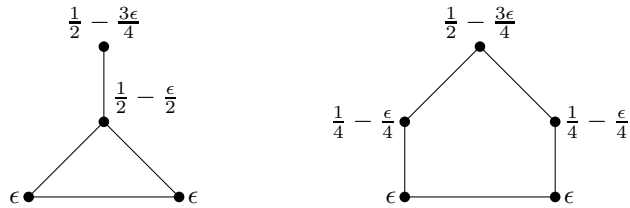

\section{Summary and Future Research} \label{secSummary}

In this paper we proved that matching queues on graphs in $\mathscr{G}_7^c$ satisfying {\sc Ncond} need not be stable.
Our proof employed a fluid-limit whose characterization builds on estimating the stationary distribution of a related marginal process.

There are many directions for future research. We specify four which we are currently investigating.

{\bf Generalizing the result.~} It follows from Lemma
\ref{Lm:BipSepCycles} that any non-bipartite and non-separable graph
induces an odd cycle of size $5$ or more, or the pendant graph. For
both the pendant and the $5$-cycle graphs we have shown that
$\textsc{Ncond}$ is not a sufficient condition for stability
(Propositions \ref{prop:nonsufficientpendant} and
\ref{prop:nonsufficient5cycle}). If a similar instability result
could be shown for any odd-cycle, then the following conjecture
would be proved via an application of the arguments in the proof of
Theorem \ref{thm:main}.

\begin{conjecture} \label{conj}
The {\em only} connected and non-bipartite graphs $G$ for which the matching queue
$(G,\lm,\Phi)_{\ssc}$ is stable for any admissible matching policy $\Phi$ and any $\lambda \in \textsc{Ncond}_{\ssc}(G)$,
are the separable graphs of order $3$ or more.
\end{conjecture}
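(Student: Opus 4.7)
The plan is to reduce Conjecture \ref{conj} to an instability result for matching queues on odd cycles of length $2p+1 \geq 7$, and then to invoke the non-chaoticity machinery developed in Section \ref{subsec:nonchaotic}. By Theorem \ref{thm:main}, the conjecture already holds on $\mathscr{G}_7^c$. By Lemma \ref{Lm:BipSepCycles}(i)-(ii), any non-bipartite and non-separable $G \in \mathscr{G}_7$ induces an odd cycle $\breve G$ of some length $2p+1$, $p \geq 3$. The proof of Theorem \ref{thm:main} then transfers verbatim: producing a priority policy $\breve\Phi$ on $\breve G$ and an arrival-rate vector $\breve\lambda \in \textsc{Ncond}_{\ssc}(\breve G)$ under which $(\breve G,\breve\lambda,\breve\Phi)_{\ssc}$ is unstable is sufficient to yield, via Lemmas \ref{lemma:Inclusion1} and \ref{LemInclusion2}, instability on the full graph $G$ under a suitable extension of $\breve\lambda$ and $\breve\Phi$ to $G$.

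For the instability step, I would fix the priority policy on the $(2p+1)$-cycle in which nodes $i \in \llbracket 1, p-1 \rrbracket$ prioritize $i+1$ over $i-1$, nodes $i \in \llbracket p, 2p-2 \rrbracket$ prioritize $i-1$ over $i+1$, and nodes $2p-1$, $2p$ give priority to $1$, $2p-2$ over $2p+1$, respectively. Setting $i_0 = 2p+1$, the associated marginal process $\chi$ on $\maS = \{1, \dots, 2p-2\}$ is a CTMC whose generator is explicit from \eqref{XnGenerator} but which, unlike the pendant and $5$-cycle cases treated in Section \ref{sec:Examples}, is not reversible for $p \geq 3$, so its stationary distribution $\pi$ admits no closed form. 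My plan is to sandwich $\chi$ stochastically between two reversible CTMCs $\underline\chi \leq_{st} \chi \leq_{st} \overline\chi$, constructed by keeping the up-rates of $\chi$ unchanged while replacing every down-rate by, respectively, its smallest and largest value across configurations. These bounding processes satisfy detailed balance and admit explicit product-form stationary distributions $\underline\pi$ and $\overline\pi$, yielding two-sided bounds on $\pi\bigl(\maP^{\maS}_{2p-1}(2p+1)\bigr)$ and $\pi\bigl(\maP^{\maS}_{2p}(2p+1)\bigr)$. Substituting the upper bound $\overline\pi$ into the drift condition \eqref{CondDecrease} of Theorem \ref{thFWLLN} and invoking Corollary \ref{corNS} reduces the problem to exhibiting $\breve\lambda \in \textsc{Ncond}_{\ssc}(\breve G)$ that violates the resulting inequality. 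I would do this analogously to Propositions \ref{prop:nonsufficientpendant} and \ref{prop:nonsufficient5cycle}, by a small perturbation of a near-symmetric vector of the form $\breve\lambda_{2p+1} \approx 1/2$, $\breve\lambda_{2p-1} = \breve\lambda_{2p} \approx 1/4$, and small symmetric rates $\breve\lambda_i = \breve\lambda_{2p-i-1}$ decaying along the cycle.

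The main obstacle is two-fold. First, the membership $\breve\lambda \in \textsc{Ncond}_{\ssc}(\breve G)$ must be checked against the exponentially many independent sets of the $(2p+1)$-cycle; I would exploit the near-symmetry of $\breve\lambda$ to collapse most of these inequalities and identify the binding ones as those corresponding to maximum independent sets of size $p$ formed by alternating nodes along the cycle, then argue inductively on $p$ for the existence of a strictly positive gap between $\breve\lambda_{i_0}$ and the upper-bound drift, uniformly in $p$. Second, I would need to verify reversibility of $\underline\chi$ and $\overline\chi$ at the ``turning'' nodes $p-1, p$ where the priority direction reverses, as well as ergodicity of $\chi$ itself (Assumption \ref{AssErgo}); both reduce to a direct case analysis of the detailed-balance equations of the bounding chains together with a monotonicity argument in the spirit of Lemma \ref{lmBndFluid}. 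If both obstacles can be overcome, the resulting instability propagates from $\breve G$ to $G$ via the Lipschitz bound of Lemma \ref{lemma:Inclusion1} exactly as in Section \ref{subsecProofMain}, closing the conjecture.
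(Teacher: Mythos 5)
Conjecture \ref{conj} is exactly that in the paper---a conjecture. It is stated in Section \ref{secSummary} and explicitly left \emph{open}: the authors note that proving it requires an instability result for the $(2p+1)$-cycle with $p\ge 3$, that the associated $(2p-2)$-dimensional marginal process is not reversible and so admits no closed-form stationary distribution, and that while one ``might be able to appropriately bound these stationary distributions,'' doing so ``seems prohibitively hard.'' Your proposal is precisely this suggested programme: reduce via Lemma \ref{Lm:BipSepCycles} to an induced odd cycle, take the priority policy that reverses direction at nodes $p-1$ and $p$, sandwich the marginal process between reversible chains with product-form invariant laws, obtain a necessary condition from the upper bound via Corollary \ref{corNS}, and produce a near-symmetric $\breve\lambda$ in $\textsc{Ncond}_{\ssc}(\breve G)$ violating it, then propagate instability through Lemmas \ref{lemma:Inclusion1} and \ref{LemInclusion2}. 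That transfer step indeed goes through verbatim once the odd-cycle instability is in hand, and you identify the right policy and the right sandwich.

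But what you have written is a plan, not a proof, and the two ``obstacles'' you flag at the end are precisely the points at which the paper's authors stopped and declared the problem open. You never actually exhibit a $\breve\lambda$ for which membership in $\textsc{Ncond}_{\ssc}(\breve G)$ is \emph{verified}: for the $(2p+1)$-cycle this is a system of inequalities indexed by \emph{all} independent sets of the cycle, whose number grows exponentially in $p$, and the claim that the near-symmetric ansatz collapses these to the alternating maximum independent sets, with a drift gap that is strictly positive \emph{uniformly in} $p$, is stated as an intention, not argued. Similarly, you say you ``would need to verify'' reversibility of the bounding chains at the turning nodes and the monotone couplings giving $\underline\chi\le_{st}\chi\le_{st}\overline\chi$ on the constrained state space, and you assume rather than prove that the upper-bound chain's ergodicity implies Assumption \ref{AssErgo} for $\chi$. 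Until those steps are carried out concretely, your proposal reproduces the paper's proposed research direction rather than closing the conjecture, and it cannot be accepted as a proof of a statement that the paper deliberately left unproved.
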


A direct demonstration of instability of a matching queue on a $p$-cycle (where $p$ is odd)
requires computing the stationary distribution of the associated
$(p-3)$-dimensional marginal process $\chi$. Unfortunately, if $p
\ge 7$, the marginal process is not reversible, so that obtaining
closed-form expressions for the stationary distributions of all
possible $(p-3)$-dimensional CTMC's seems prohibitively hard.
Nevertheless, one might be able to appropriately bound these
stationary distributions and prove a result analogous to
Propositions \ref{prop:nonsufficientpendant} and
\ref{prop:nonsufficient5cycle}.

{\bf Identifying bottlenecks.~}
The fluid limit may be used to construct a procedure determining the ``bottlenecks'' (namely, unstable) nodes of general unstable matching queues.
Moreover, when the stationary distribution of the fast-time-scale CTMC can be computed explicitly,
the fluid limit provides the exact rate of increase of the queues corresponding to the unstable nodes.

{\bf Matching models on hypergraphs. ~} In our model, items depart
the system by pairs. However, in many applications (e.g.,
manufacturing and assemble-to-order systems) matchings can occur in
groups that are larger than $2$ (as, for example, in \cite{GurWa}).
Thus, it remains to establish an analogue to {\sc Ncond} when the
compatibility between items cannot be represented by a graph, but
more generally, by an hypergraph.


\appendix
\section{Matching Algorithms on Random Graphs}
\label{sec:RG}


 In graph theory, a {\em matching on a graph}
$\mathbf G$ is a subgraph $\check {\mathbf G}$ of $\mathbf G$ in
which each node has exactly one neighbor. The matching is said to be
{\em perfect} if $\mathbf G$ and $\check{\mathbf G}$ have the same
set of nodes. It is a consequence of Tutte's Theorem (a
generalization of Hall's Marriage Theorem to arbitrary graphs), that
a necessary condition for the existence of a perfect matching on
$\mathbf G$ is given by
\begin{equation}
\label{eq:tutte} |\mathbf I| \le |\maE_{\mathbf G}\left(\mathbf
I\right)|,\mbox{ for any independent set }\mathbf I \in \mathbb
I(\mathbf G),
\end{equation}
where $\maE_{\mathbf G}\left(\mathbf I\right)$ denotes the set
of neighbors of the elements of $\mathbf I$ in $\mathbf G$.
A {\em matching algorithm} is a procedure for constructing a matching.

It is a well-known fact that, even when a perfect matching exists on
$\mathbf G$, an on-line matching algorithm under which,
at each step a node is chosen uniformly at random among all
unmatched nodes and its match is chosen uniformly at random among
all its unmatched neighbors, fails in general to lead to a perfect
matching.

A matching on a graph is not to be confused with the stochastic
matching of items discussed thus far. However, clear connections can
be drawn between the two problems, as we briefly illustrate by a
simple example.

 Consider a {\em random} graph in which the nodes are of $p$
different types, such that the types of the various nodes are random
and i.i.d., having a common distribution $\mu$ on the set of types
$\llbracket 1,p \rrbracket$. Nodes of the same type are not
neighbors of each other, and have the same neighbors. Specifically,
we fix a given auxiliary (simple) graph $G=(\maV,\maE)$ of size $p$,
which we call {\em template graph}, and the set of types of
the nodes of $\mathbf G$ is identified with $\maV$. The edges of
$\mathbf G$ are fully determined by the types of the nodes according
to the following rule: two nodes $u$ and $v$ of $\mathbf G$, of
respective types $i$ and $j$, are neighbors in $\mathbf G$ if and
only if $i\v j$ in $G$.

We aim to construct a matching on the resulting graph $\mathbf G$.  
Our approach is to construct the random graph $\mathbf G$ {\em
together} with the matching on $\mathbf G$ sequentially. To this
end, we define $p$ independent Poisson processes $N_1,...,N_p$ with
respective intensities $\mu(i),\,i\in\llbracket 1,p \rrbracket$, and
let $T_1,T_2,...$ the points of the superposition $N$ of the $p$
processes. We also fix a matching queue $(G, \mu, \Phi)_{\ssc}$,
where the arrival-rate vector is denoted
$\mu=\left(\mu(1),...,\mu(p)\right)$. We proceed by induction, at
each point of $N$. For all $n\ge 1$,
\begin{itemize}
\item[(i)] Let $i$ be the element of $\llbracket 1,p \rrbracket$ such that $T_n$ is a point of $N_i$. At $T_n$, create a node $u$ of $\mathbf G$, and assign to $u$ the type $i$.
Then, create an edge in $\mathbf G$ between $u$ and all the
previously created nodes of all types $j$ such that $i \v j$ in the
template graph $G$.
\item[(ii)] If the set of unmatched neighbors of $u$ in $\mathbf G$ is non-empty, apply $\Phi$ to select a unique node $v$ in the latter
set, exactly as we choose a match for an item of class $i$ in the
matching queue $(G, \mu, \Phi)_{\ssc}$. We call $v$ the {\bf match}
of $u$, and say that both $u$ and $v$ are {\em matched} nodes. If no
neighbor of $u$ is unmatched, we set the status of $u$ to be {\em
unmatched}.
\end{itemize}
At any time $T_n$, the graph $\mathbf G$ has exactly $n$ nodes, some
of which are matched and the other are unmatched. We let
$\mathscr M(\mathbf G)$ and by $\bar{\mathscr M}(\mathbf G)$ denote the sets of matched and unmatched nodes of $\mathbf G$, respectively.
The {\em matching} on $\mathbf G$ is the set of nodes $\mathscr M(\mathbf G)$, together with the edges between matched couples.

Almost surely for a large enough $n$, the resulting graph $\mathbf
G$ at $T_n$ is $p$-partite (all types of nodes are represented in
$\mathbf G$, and there is no edge between any two nodes of the same
type). Moreover, as a straightforward application of the Strong Law of Large Numbers (SLLN),
the proportions of nodes of the various types tend
to the measure $\mu$ as $n$ increases to infinity.

The construction just described is
related to the procedure of uniform random pairing; see
\cite{Wormald}. This latter procedure leads to the so-called
configuration model introduced by Bollobas \cite{Bollo} (see also
\cite{Remco}), generating a realization of a random graph where the
{\em degree}, i.e., the number of neighbors of each node, is fixed
beforehand. We also refer to \cite{GG09} and the
references therein, for general results concerning matching on
random graphs.

\begin{figure}[h!]
\begin{center}
\begin{tikzpicture}
\fill (0,1) circle (2pt) node[left]{$1$}; \fill (2,1) circle (2pt)
node[right]{$2$}; \fill (1,2) circle (2pt) node[right]{$3$}; \fill
(1,3) circle (2pt) node[above]{$4$}; \draw[-] (0,1) -- (2,1);
\draw[-] (0,1) -- (1,2); \draw[-] (1,2) -- (2,1); \draw[-] (1,2) --
(1,3);
\fill (5,2) circle (2pt) node[below left]{$2$}; \fill (5,0) circle
(2pt) node[below]{$2$}; \fill (3,1.5) circle (2pt) node[left]{$4$};
\fill (7,3) circle (2pt) node[right]{$3$}; \fill (6,4)circle (2pt)
node[above]{$1$}; \fill (5.5,1) circle (2pt) node[above]{$3$}; \fill
(6.5,1.5) circle (2pt) node[right]{$1$}; \fill (4,3) circle (2pt)
node[above]{$3$}; \draw[-, very thin] (4,3) -- (5,2); \draw[-, very
thin] (7,3) -- (3,1.5); \draw[-,very thick] (7,3) -- (5,0); \draw[-,
very thin] (6,4) -- (5,0); \draw[-, very thin] (5.5,1) -- (3,1.5);
\draw[-, very thick] (5.5,1) -- (6.5,1.5); \draw[-, very thin]
(5.5,1) -- (5,0); \draw[-, very thin] (6.5,1.5) -- (5.5,1); \draw[-,
very thin] (6.5,1.5) -- (5,0); \draw[-, very thin] (6.5,1.5) --
(7,3); \draw[-, very thin] (6,4) -- (7,3); \draw[-, very thin] (4,3)
-- (5,0); \draw[-, very thick] (4,3) -- (3,1.5); \draw[-, very thin]
(4,3) -- (6,4); \draw[-, very thin] (4,3) -- (6.5,1.5);
\draw[-, very thin] (5,2) -- (5.5,1); \draw[-, very thick] (5,2) --
(6,4); \draw[-, very thin] (5,2) -- (6.5,1.5); \draw[-, very thin]
(5,2) -- (7,3);

\end{tikzpicture}
\caption[smallcaption]{Construction of a matching on a 4-partite
graph having the pendant graph as template.}
\label{fig:exampleMatching}
\end{center}
\end{figure}
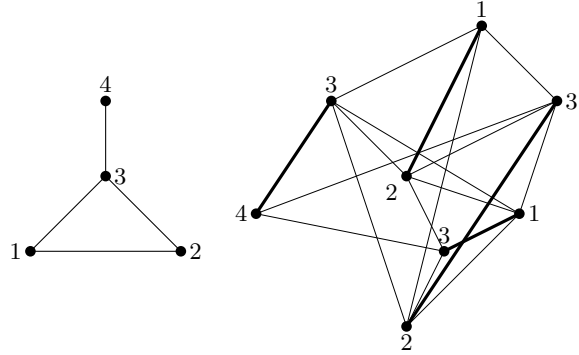

It is then easy to couple the construction described above with the
matching queue $(G,\mu,\Phi)$: if both are constructed with the same
Poisson processes, then

\noindent (i) the creation of a node of type $i$ in the random graph $\mathbf G$ corresponds
to the arrival of a class-$i$ item in the matching queue;

\noindent (ii) a matching between two nodes of respective types $i$ and $j$
in $\mathbf G$ occurs if and only if in the matching queue, at the
same instant, two items of respective classes $i$ and $j$ are
matched, and depart the system.

Consequently, at any time the list of classes of queued items in the
matching queue coincides with the list of types of the unmatched
nodes in $\mathbf G$.

For all $j \in \llbracket 1,p \rrbracket$ and $t \ge 0$,
let $\mathbf Q_j(t)$ denote the number of unmatched nodes of type $j$ in $\mathbf G$ at time $t$, and let
$\mathbf Q=\left(\mathbf Q_1,...,\mathbf Q_p\right).$
As before, let $Q$ denote the queue process of the matching queue in \eqref{eq:defQ}.
It follows that, if $Q(0) = \mathbf Q (0)$, then
\[\mathbf Q_j(t) = Q_j(t),\,j \in \llbracket 1,p \rrbracket,\,t\ge 0.\]
Therefore, Theorems \ref{thm:stabCont} and \ref{propGR} imply the
following.

\begin{corollary}
\label{pro:stabRG} Assume that the template graph $G$ is connected
and non-bipartite.
\begin{itemize}
\item[(i)] If $\Phi=\textsc{ml}$, then for all $\mu \in \mbox{{\sc Ncond}}_{\ssc}(G)$, the Markov process $\mathbf Q$ is positive
recurrent;
\item[(ii)] If $\Phi=\sU$ and $G \in \mathscr G_7^c$, then the Markov process $\mathbf Q$ is positive recurrent for all
$\mu \in \mbox{{\sc Ncond}}_{\ssc}(G)$ if and only if $G$ is separable. 
\end{itemize}
\end{corollary}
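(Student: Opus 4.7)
The plan is to leverage the coupling between the random graph matching construction and the matching queue on the template graph $G$, as sketched in the excerpt. Since both constructions are driven by the same $p$ independent Poisson processes $N_1, \dots, N_p$ with intensities $\mu(1), \dots, \mu(p)$, and since the matching decisions at each arrival epoch follow the same rule $\Phi$ acting on the types/classes of the incompletely matched nodes/items, I would first verify rigorously that, under the initial condition $\mathbf Q(0) = Q(0) = 0$, the identity $\mathbf Q(t) = Q(t)$ holds pathwise for all $t \ge 0$. This reduces the positive recurrence question for $\mathbf Q$ to the corresponding question for the queue process $Q$ of the matching queue $(G, \mu, \Phi)_{\ssc}$, which has already been extensively studied.

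Once this coupling is in place, I would prove part (i) as a direct corollary of Theorem \ref{thm:stabCont}(ii): since $G$ is connected and non-bipartite by assumption, the policy $\textsc{ml}$ is maximal on $G$, so $(G, \mu, \textsc{ml})_{\ssc}$ is stable (i.e., $Q$ is positive recurrent) for every $\mu \in \textsc{Ncond}_{\ssc}(G)$, and thus $\mathbf Q$ inherits positive recurrence through the pathwise identity.

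Part (ii) splits into sufficiency and necessity. For sufficiency, if $G$ is separable of order $q \ge 3$ (note that under the bipartiteness exclusion and $G \in \mathscr G_7^c$, separability forces $q \ge 3$), then Theorem \ref{thm:stabCont}(iii) asserts that $G$ is matching-stable, so every admissible policy, in particular $\sU$, yields a stable matching queue on $G$ for all $\mu \in \textsc{Ncond}_{\ssc}(G)$; positive recurrence of $\mathbf Q$ then follows from the coupling. For necessity, suppose $G \in \mathscr G_7^c$ is connected, non-bipartite, and \emph{not} separable. Then Proposition \ref{propGR} guarantees that $\sU$ is not maximal on $G$, meaning there exists $\mu \in \textsc{Ncond}_{\ssc}(G)$ such that $(G, \mu, \sU)_{\ssc}$ is unstable, i.e., $Q$ is not positive recurrent. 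By the coupling, $\mathbf Q$ fails to be positive recurrent for this $\mu$, completing the contrapositive.

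The main obstacle, in my view, is not conceptual but verificational: one must carefully check that the coupling really identifies $\mathbf Q$ with $Q$ as Markov processes, not merely as processes with coinciding one-dimensional marginals. The subtle point is that the policy $\Phi$ (whether $\textsc{ml}$ or $\sU$) acts only on the \emph{multiset of types/classes} of unmatched nodes/queued items, and not on any additional information carried by the random graph (such as the identity of individual nodes or the history of edges added). Once one observes that both $\textsc{ml}$ and $\sU$ are admissible in the sense of Section \ref{subsecModel}, i.e., their decisions depend solely on the vector of type counts (modulo an independent uniform draw for $\sU$, which can be coupled to the matching-queue's internal uniform draw), the pathwise equality $\mathbf Q(t) = Q(t)$ becomes routine from the construction, and the rest of the proof is a clean application of the earlier stability/instability results.
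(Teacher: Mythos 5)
Your proposal is correct and follows essentially the same route as the paper: the pathwise coupling $\mathbf Q(t)=Q(t)$ reduces the question to the stability theory of the matching queue, after which part (i) is Theorem \ref{thm:stabCont}(ii), and part (ii) combines Theorem \ref{thm:stabCont}(iii) (sufficiency) with the contrapositive of Proposition \ref{propGR} (necessity). Your remark about coupling the internal uniform draw for $\sU$ is a reasonable point of care that the paper leaves implicit.
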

Let us now observe that just after time $T_n$, the size of the
matching on $\mathbf G$ is given by
\begin{equation}
\label{eq:sizematching} |\mathscr M(\mathbf G)|=n - |\bar{\mathscr
M}(\mathbf G)|=n-\sum_{i=1}^p \mathbf Q_i\left(T_n\right).
\end{equation}

In view of (\ref{eq:sizematching}), 
Corollary \ref{pro:stabRG} suggests that the
separable graphs are the only template graphs in $\mathscr G_7^c$
guaranteeing that under {\sc Ncond} and the uniform
policy, $|\bar{\mathscr M}(\mathbf G)|$ becomes negligible with
respect to $n$ as $n$ increases. Therefore, the size of the matching and the size of the
graph tend to coincide.

Now, for any time $T_n$ and for any set of nodes $A \subset \maV$,
let $\mathbf X_A(T_n)$ denote the set of nodes of $\mathbf G$ at $T_n$ having types in $A$. We have
\begin{equation}
\label{eq:loser1} \mathbf X_A(T_n)= \sum_{j \in A}
N_j\left(\left(0,T_n\right]\right),
\end{equation}
where $N_j((0,T_n])$ is the number of points of $N_j$ up to time
$T_n$. By construction, for any two nodes $u$ and $v$ in
$\mathbf G$ of respective types $i$ and $j$, $u \pv v$ in $\mathbf
G$ entails $i \pv j$ in $G$. Thus, for any independent set $\mathbf
I$ of $\mathbf G$ at $T_n$, there exists a unique independent set
$\maI$ of $G$, such that $\mathbf I \subset \mathbf X_{\maI}(T_n)$,
and all types in $\maI$ are represented in $\mathbf I$.  Moreover,
the set of all the neighbors in $\mathbf G$ of the elements
of $\mathbf I$ is exactly the set of all nodes of $\mathbf G$ having
types that are neighbors in $G$ of the types belonging to
$\maI$, that is,
\begin{equation}
\label{eq:loser2}
\mathcal E_{\mathbf G}(\mathbf I)=\mathbf
X_{\maE(\maI)}\left(T_n\right).
\end{equation}
It follows from  (\ref{eq:tutte}), (\ref{eq:loser1}) and (\ref{eq:loser2}) that
\begin{equation}
\label{eq:loser3} \sum_{i \in \mathcal I} N_i\left(T_n\right) \le
\sum_{j \in \mathcal E(\mathcal I)}N_j\left(T_n\right),\,\mbox{ for
all independents sets }\maI \in \mathbb I(G).
\end{equation}
is a necessary condition for the existence of a perfect matching on $\mathbf G$ at time $T_n$.
Dividing both sides of the equality in (\ref{eq:loser3}) by $n$ and taking $n$ to
infinity, the SLLN implies that
$\mu\left(\mathcal I\right) \le \mu\left(\mathcal E(\mathcal I)\right)$, for all independents sets $\maI \in \mathbb I(G).$

We conclude that a necessary condition for the existence of a
perfect matching on $\mathbf G$ in the large graph limit is closely
related to {\sc Ncond}$(G)$. (Specifically, the strict inequality in {\sc Ncond} is replaced by a weak inequality).
Thus, Corollary \ref{pro:stabRG} is reminiscent of the aforementioned result
concerning the construction of matchings using uniform on-line
algorithms: aside from the case of separable graphs (for which all
matching policies are equivalent in terms of types, see Section
\ref{subsecPre}), under a condition on the connectivity of $\mathbf
G$ that is closely related to (\ref{eq:tutte}), a matching policy
that is uniform in terms of types of nodes also may fail in general
to construct a perfect matching on $\mathbf G$.

\providecommand{\bysame}{\leavevmode\hbox to3em{\hrulefill}\thinspace}
\providecommand{\MR}{\relax\ifhmode\unskip\space\fi MR }
\providecommand{\MRhref}[2]{%
  \href{http://www.ams.org/mathscinet-getitem?mr=#1}{#2}
}
\providecommand{\href}[2]{#2}


\begin{thebibliography}{{{\"U}}82}

\bibitem{AdWe}
I.~Adan and G.~Weiss.
\newblock Exact FCFS matching rates for two infinite multi-type sequences.
\newblock {\em Operations Research}, 60(2):475--489, 2012.


\bibitem{AdWe14}
I.~Adan and G.~Weiss.
A skill based parallel service system under FCFS-ALIS -- steady state, overloads, and abandonments.
{\em Stochastic Systems}, {\bf 4}(1), 250--299, 2014.


\bibitem{BacBre02}
F. Baccelli and P. Br\'emaud.
\emph{Elements of Queueing Theory}, 2nd ed.
  Springer, 2002.

\bibitem{Billingsley}
P. Billingsley.
{\em Convergence of probability measures}, 2nd ed. Wiley, New York. 1999.

\bibitem{Bollo}
B. Bollobas. {\em Random Graphs}. Cambridge University Press, 2001.

\bibitem{Bramson_Instable}
M. Bramson.
Instability of FIFO queueing networks.
{\em Ann. Appl. Probab.} {\bf 4}(2), 414--431. 1994.


\bibitem{bramsonSSC}
M. Bramson.
State space collapse with application to heavy traffic limits for multiclass queueing networks.,
{\em Queueing Systems}, {\bf 30}, (1-2), 89--140, 1998.


\bibitem{BramsonBook}
M. Bramson.
{\em Stability of queueing networks}.
Springer. 2008.


\bibitem{BGMa12}
A.~Busi{\'c}, V.~Gupta, and J.~Mairesse.
\newblock Stability of the bipartite matching model.
\newblock {\em Adv. Appl. Probab.}, 45(2):351--378, 2013.

\bibitem{CKW09}
R.~Caldentey, E.H. Kaplan, and G.~Weiss.
\newblock {FCFS} infinite bipartite matching of servers and customers.
\newblock {\em Adv. Appl. Probab}, 41(3):695--730, 2009.


\bibitem{DaiFluid}
J.G. Dai.
On positive Harris recurrence of multiclass queueing networks: a unified approach via fluid limit models.
{\em The Annals of Applied Probability}, 49--77, 1995.

\bibitem{daiInstablefluid}
J.G. Dai.
A fluid limit model criterion for instability of multiclass queueing networks
{\em The Annals of Applied Probability} {\bf 6} (3) 751--757, 1996.


\bibitem{DaiWilliams96}
J.G. Dai and R.J. Williams
Existence and uniqueness of semimartingale reflecting Brownian motions in convex polyhedrons.
{\em Theory of Probability \& Its Applications}, 40 {\bf 1} 1--40, 1996.

\bibitem{Davis_kidney}
A.E. Davis, S. Mehrotra, V. Kilambi, O. Perry, J.J. Friedewald, D.P. Ladner.
Addressing US national geographic disparity in kidney transplantation by creating sharing partnerships, {\em Working Paper} 2015.

\bibitem{Durrett}
R. Durrett.
{\em Probability: theory and examples.}
Wadsworth and Brooks/cole, Pacific Grove, California. 1991.

\bibitem{GG09}
D. Gamarnik and D.A. Goldberg.
Randomized Greedy Algorithms for Independent Sets and Matchings in Regular Graphs: Exact Results and Finite Girth Corrections.
{\em Combinatorics, Probability and Computing}, {\bf 19}(1), 61--85, 2010.

\bibitem{GamHas2005}
D. Gamarnik and J. J. Hasenbein.
Instability in stochastic and fluid queueing networks.
{\em The Annals of Applied Probability}. {\bf 15}(3), 1652--1690, 2005.

\bibitem{GLM15}
I. Gurvich, M. Lariviere and A. Moreno.
Operations in the on-demand economy: Staffing services with self-scheduling capacity.
{\em Available at SSRN 2336514}, 2016.


\bibitem{GurWa}
I. Gurvich and A. Ward.
On the dynamic control of matching queues.
{\em Stochastic Systems}, {\bf 4}(2), 1--45, 2014.


\bibitem{Hall}
Ph. Hall. On Representatives of Subsets. {\em J. London Math. Soc.}, {\bf 10}(1) 26--30, 1935.

\bibitem{harrison2006}
J.M. Harrison.
Correction: Brownian models of open processing networks: Canonical representation of workload.
{\em The Annals of Applied Probability}, {\bf 16}(3), 1703--1732, 2006.

\bibitem{HK94}
P.J. Hunt and T.G. Kurtz
Large loss networks.
{\em Stochastic Processes and Their Appl.} {\bf 53}(2) 363--378, 1994.

\bibitem{rouba}
R. Ibrahim.
Staffing a service system where capacity is random. {\em Working paper}, 2016.

\bibitem{KurtzAP}
T.G. Kurtz.
Averaging for martingale problems and stochastic approximations.
{\em Appl. Stochastic Anal., Proc.} US-French Workshop. Lecture Notes in Control Inform. Sci., Vol. 177
(Springer, Berlin), 186--209, 1992.

\bibitem{LPV}
L. Lov\'asz, J. Pelik\'an and K. Vesztergombi.
{\em Discrete Mathematics - Elementary and Beyond.}
Springer, 2003.

\bibitem{LuKumar}
S.H. Lu and P.R. Kumar
Distributed scheduling based on due dates and buffer priorities.
{\em IEEE Trans. Automat. Control}. {\bf 36}(12), 1406--1416, 1991.

\bibitem{Zhang_Chatting}
J. Luo and J. Zhang.
Staffing and control of instant messaging contact centers.
{\em Operations Research}, {\bf 61}(2), 328--343, 2013.

\bibitem{MaiMoyal15}
J. Mairesse and P. Moyal.
Stability of the stochastic matching model.
{\em Journal of Applied Probability}, forthcoming.
Also available at \emph{ArXiv math.PR 1404.6677v2}.

%

\bibitem{meyn1995}
S.P. Meyn.
Transience of multiclass queueing networks via fluid limit models.
{\em The Annals of Applied Probability}, {\bf 5}(4), 946--957, 1995.



\bibitem{PTW07}
G. Pang, R. Talreja, W. Whitt.
Martingale proofs of many-server heavy-traffic limits for Markovian queues.
{\em Probability Surveys}, \textbf{4}, 193--267, 2007.

\bibitem{PW-ODE}
Perry, O., W. Whitt.
An ODE for an overloaded X model involving a stochastic averaging principle.
{\em Stochastic Systems}, {\bf 1} (1), 17--66, 2011.

\bibitem{PWfluid}
O. Perry and W. Whitt.
A fluid limit for an overloaded X model via a stochastic averaging principle.
{\em Mathematics of Operations Research}, {\bf 38}(2) 294 -- 349, 2013.






\bibitem{Remco}
R. Van der Hofstad. {\em Random Graphs and Complex Networks.} Lecture Notes, 2013.

\bibitem{Joh15}
Pricing in Ride-Share Platforms: A Queueing-Theoretic Approach.
B. Siddhartha, C. Riquelme and R. Johari.
{\em Available at SSRN 2568258}, 2015.



\bibitem{RobertBook}
P. Robert.
{\em Stochastic networks and queues}.
Springer-Verlag, 2003.

\bibitem{RySto92}
A.N. Rybko and A.L. Stolyar.
Ergodicity of stochastic processes describing the operations of open queueing networks.
{\em Problems Inform. Transmission} {\bf 28}, 3--26, 1992 (in Russian).

\bibitem{TW08}
R. Talreja and W. Whitt.
Fluid models for overloaded multiclass many-server queueing systems with first-come, first-served routing.
{\em Management Science}, {\bf 54}(8), 1513--1527, 2008.


\bibitem{Tutte}
W. T. Tutte. The Factorization of Linear Graphs. {\em J. London Math. Soc.} {\bf 22} 107--111, 1947.

\bibitem{Whitt-PSA}
W. Whitt.
The pointwise stationary approximation for $M_t/M_t/s$ queues is asymptotically correct as the rates increase.
{\em Management Science}, {\bf 37}(3), 207--314, 1991.

\bibitem{WhittBook}
W. Whitt. {\em Stochastic-Process Limits}.
Springer, 2002.

\bibitem{Wormald}
N. C. Wormald.
Differential equations for random processes and random graphs.
{\em Annals of Applied Probability} {\bf 5}(4), 1217--1235, 1995.

\bibitem{Zhang_wireless}
S. Wu, J. Zhang and R.Q. Zhang.
Management of A Shared Spectrum Network in Wireless Communications.
{\em Working paper}, HKUST.

\end{thebibliography}
\end{document}